\DeclareMathAlphabet{\pazocal}{OMS}{fszplm}{m}{n}
\titleformat{\subsection}[runin]
       {\normalfont\bfseries}
       {\thesubsection.}
       {0.5em}
       {}
       [.]
\titleformat{\subsubsection}[runin]
       {\normalfont\bfseries}
       {\thesubsubsection.}
       {0.5em}
       {}
       [.]
\newcommand{\dbar}{\frac{\partial}{\partial \bar{\tau}}}
\newcommand{\Mtil}{\widetilde{M}}
\newcommand{\dds}{\restr{\frac{\partial}{\partial s}}{s=0}}
\newcommand{\dxdy}{\frac{dxdy}{y^2}}
\newcommand{\iotau}{\iota_{u\frac{\partial}{\partial u}}}
\newcommand{\uud}{\underline{d}}
\newcommand{\alphahat}{\widehat{\alpha}}
\newcommand{\8}{\infty}
\newcommand{\Z}{\mathbb{Z}}
\newcommand{\NN}{\mathbb{N}}
\newcommand{\id}{\mathbf{1}}
\newcommand{\vbf}{\mathbf{v}}
\newcommand{\wbf}{\mathbf{w}}
\newcommand{\Qbar}{\overline{\Q}}
\newcommand{\SO}{\mathrm{SO}}
\newcommand{\SL}{\mathrm{SL}}
\newcommand{\KM}{\mathrm{KM}}
\newcommand{\BM}{\mathrm{BM}}
\newcommand{\GL}{\mathrm{GL}}
\newcommand{\Stil}{\widetilde{S}}
\newcommand{\D}{\mathbb{D}}
\newcommand{\HH}{\mathbb{H}}
\newcommand{\R}{\mathbb{R}}
\newcommand{\A}{\mathbb{A}}
\newcommand{\Q}{\mathbb{Q}}
\newcommand{\C}{\mathbb{C}}
\newcommand{\PP}{\mathbb{P}}
\newcommand{\nfrak}{\mathfrak{n}}
\newcommand{\afrak}{\mathfrak{a}}
\newcommand{\dfrak}{\mathfrak{d}}
\newcommand{\g}{\mathfrak{g}}
\newcommand{\cfrak}{\mathfrak{c}}
\newcommand{\vphat}{\widehat{\phi}}
\newcommand{\End}{\mathrm{End}}
\newcommand{\vol}{\mathrm{vol}}
\newcommand{\Tr}{\mathrm{Tr}}
\newcommand{\tr}{\mathrm{tr}}
\newcommand{\N}{\mathrm{N}}
\newcommand{\sgn}{\mathrm{sgn}}
\newcommand{\sing}{\mathrm{sing}}
\newcommand{\reg}{\mathrm{reg}}
\newcommand{\hol}{\mathrm{hol}}
\newcommand{\CT}{\mathrm{CT}_{\rho=0}}
\newcommand{\Ocal}{\mathcal{O}}
\newcommand{\Scal}{\pazocal{S}}
\newcommand{\Fcal}{\pazocal{F}}
\newcommand{\Cl}{\textrm{Cl}}
\newcommand{\Ecal}{\pazocal{E}}
\newcommand{\intx}{\int_{-\frac{1}{2}}^{\frac{1}{2}}}
\newcommand{\re}{\mathrm{Re}}
\newcommand{\supp}{\mathrm{supp}}
\newcommand{\Mat}{\mathrm{Mat}}
\newcommand{\im}{\mathrm{Im}}
\newcommand{\hooklongrightarrow}{\lhook\joinrel\longrightarrow}
\newcommand\restr[2]{{
  \left.\kern-\nulldelimiterspace 
  #1 
  \vphantom{\big|} 
  \right|_{#2} 
  }}
\newtheoremstyle{mytheoremstyle} 
    {1.5em}                    
    {1.em}                    
    {\itshape}                   
    {}                           
    {\normalsize \bfseries}                   
    {.}                          
    {0,5em}                       
    {}  
\theoremstyle{mytheoremstyle}
\newtheorem{thm}{Theorem}[section]
\newtheorem*{thm*}{Theorem}
\newtheorem{cor}{Corollary}[thm]
\newtheorem*{cor*}{Corollary}
\newtheorem{lem}[thm]{Lemma}
\newtheorem{prop}[thm]{Proposition}
\newtheorem*{que*}{Question}
\theoremstyle{remark}
\newtheorem{rmk}{Remark}[section]
\newtheorem*{ex}{Example}
\numberwithin{equation}{section}
\renewenvironment{abstract}
 {\small
  \begin{center}
  \bfseries \abstractname\vspace{-.5em}\vspace{0pt}
  \end{center}
  \list{}{
    \setlength{\leftmargin}{3cm}%
    \setlength{\rightmargin}{\leftmargin}%
  }%
  \item\relax}
 {\endlist}
\begin{document}
\title{A regularized theta lift on the symmetric space of $\SL_N$}
\author{ Romain Branchereau }
\date{} 
\maketitle

\begin{abstract} We define a regularized lift from harmonic weak Maass forms of weight $2-N$ to differential forms of degree $N-1$ on the symmetric space $\SL_N(\R)/\SO(N)$, that are smooth outside of certain modular symbols. We show that this lift is adjoint to the derivative of a theta lift. We compute the periods of the regularized lift over tori and relate them to Fourier coefficients of Hilbert-Eisenstein series. 
\end{abstract}

{
  \tableofcontents
}

\section{Introduction}

Regularized lifts are theta lifts from harmonic weak Maass forms to functions on orthogonal Grassmannians $O(p,q)/O(p)\times O(q)$ with singularities along smaller sub-Grassmannians. The regularization is done by taking the limit of an integral over a truncated fundamental domain. First introduced by Borcherds in his seminal work \cite{borcherds1,borcherds2}, many variations of these regularized lifts (now often called Borcherds lifts) have been considered since then, for example in \cite{Bruinier12,Garcia16,AS20,FH21,CF23}.  Most notably, Bruinier-Funke \cite{BF04} constructed a lift from weak harmonic Maass forms to differential forms with singularities along special cycles, and which is adjoint to the Kudla-Millson lift. Since the work of Bruinier \cite{bruinierborch,B99} and Bruinier-Funke \cite{BF04}, it is known that these regularized lifts satisfy certain current equations and play an important role in arithmetic geometry. When the signature is $(p,2)$ and the Grassmannian is a hermitian domain, these regularized lifts are Green currents for special cycles. For general signature $(p,q)$, the regularized lift of Bruinier-Funke defines a differential character.

When the regularized lift is a function on the Grassmannian, it can be evaluated at special points associated with CM extensions. There is an extensive literature on these special values of regularized lifts, mainly in the case $O(p,2)$ -- see for example \cite{Schofer,V15,BKY,BY09,BEY,E17,Li22,Li23,BLY25}. This was greatly motivated by the work of Gross-Zagier \cite{gz86} and Gross-Kohnen-Zagier \cite{GKZ87}. In particular, in signature $(2,2)$ these values were conjectured by Gross-Zagier to be logarithms of algebraic numbers, which has been recently proved by Li \cite{Li23} and Bruinier–Li–Yang \cite{BLY25}. An important aspect of these special values is that they are related to Fourier coefficients of derivatives of Eisenstein series; see \cite{BKY,BY09,BEY}. This plays an important role in the proof of Li \cite{Li23} of the conjecture of Gross-Zagier. There have been a few results about special values of regularized lifts in signature $(n,1)$. First, by Bruinier-Schwagenscheidt \cite{BS21}, and more recently by Herrero-Imamo\u{g}lu-Pippich-Schwagenscheidt \cite{HIPS24} in signature $(3,1)$.

In this paper, we consider a regularized lift similar to the construction of Bruinier-Funke \cite{BF04}, lifting weak harmonic Maass forms to singular differential forms on a symmetric space. Instead of an orthogonal symmetric space, we consider the symmetric space $S=\SL_N(\R)/\SO(N)$. 
We show that this lift is adjoint to the derivative of a cohomological lift on $S$. In particular, we replace the evaluation at a special point by the integral over a cycle attached to an algebraic torus, and relate it to the derivative of a Hilbert-Eisenstein series.

\subsection{A cohomological theta lift} \label{section cohomo lift} Let $X = \GL_N(\R)^+/\SO(N)$ be the set of positive definite symmetric matrices, and $S = \SL_N(\R)/\SO(N)$ be the set of matrices of determinant one. For a torsion-free congruence subgroup $\Gamma \subset \SL_N(\Q)$, we consider the locally symmetric spaces 
\begin{align}
S_\Gamma=\Gamma \backslash \SL_N(\R)/\SO(N)
\end{align} and $X_\Gamma=\Gamma \backslash X$. Let $V=\Q^N \times \Q^N$ be the quadratic space of signature $(N,N)$ with quadratic form $Q(\vbf)=\langle v,w \rangle$ for a vector $\vbf=(v,w)$ in $V$. In \cite{rbrsln}, we used the Mathai-Quillen formalism \cite{MQ86} to construct a closed and $\GL_N(\R)$-invariant $N$-form
\begin{align}
\varphi(z,\vbf) \in [\Omega^{N}(X) \otimes \Scal(V_\R)]^{\GL_N(\R)},
\end{align}
valued in the space of Schwartz functions on $V_\R = \R^N \times \R^N$. By taking a regularized integral (with a complex parameter $s \in \C$) of a theta series we obtain a form
\begin{align} \label{E varphi}
E_\varphi(z_1,\tau,\phi,s) \in \Omega^{N-1}(S)^\Gamma \otimes C^\8(\HH) , \qquad z_1 \in S, \ \tau \in \HH,
\end{align}
where $\phi$ is a $\Gamma$-invariant finite Schwartz function. Moreover, in the variable $\tau$ it transforms like a modular form of weight $N$ for a congruence subgroup $\Gamma'\subseteq \SL_2(\Z)$.

Let $\Mtil_{N}(\Gamma')$ be the space of smooth functions on $\HH$ that transform like a modular form of weight $N$ for $\Gamma'$ and have moderate growth at all cusps. By integrating the above differential form above we obtain a lift 
\begin{align}
E_\varphi \colon C_{N-1}(S_\Gamma;\Z) & \longrightarrow \Mtil_{N}(\Gamma'), \\
 c & \longmapsto E_\varphi(c,\tau,\phi,s) \coloneqq \int_c E_\varphi(z_1,\tau,\phi,s).
\end{align}
from the group of smooth chains $C_{N-1}(S_\Gamma;\Z) $ to modular forms. The form $E_\varphi(c,\tau,\phi,s)$ is closed and holomorphic at $s=0$, so that it specializes to a holomorphic lift at the level of cohomology
\begin{align} \label{cohom lift}
E_\varphi \colon H_{N-1}(S_\Gamma;\Z) & \longrightarrow M_{N}(\Gamma'),\\
 [c] & \longmapsto E_\varphi(c,\tau,\phi) \coloneqq \int_c E_\varphi(z_1,\tau,\phi).
\end{align}
This lift is closely related to the lift in \cite{bcg,bcgcrm}. Moreover, if $\omega_c$ denotes a Poincaré dual to $c$, then it was shown in \cite{rbrsln} that it has the Fourier expansion
\begin{align}
E_\varphi(c,\tau,\phi)=\int_c a_0 + \sum_{n=1}^\8 \left (\int_{S_n(\phi)}\omega_c \right )e(n \tau), \qquad e(x)\coloneqq e^{2i \pi x},
\end{align}
where $a_0$ is a transgression of an Euler form as in \cite{bcg},
\begin{align} \label{special cycles}
S_n(\phi)=\sum_{\substack{[\vbf] \in \Gamma \backslash V \\ Q(\vbf)=n}} \phi(\vbf)S_{[\vbf]} \in Z^{\BM}_{\frac{N^2-N}{2}}(S_\Gamma, \Z)
\end{align}
is a locally finite cycle of codimension $N-1$ in $S_\Gamma$, and $S_{[\vbf]}$ is a generalized modular symbol obtained from an embedding of $\GL_{N-1}(\R)$ in $\SL_N(\R)$. Hence, we can interpret the $n$-th Fourier coefficient as the intersection number
\begin{align}
\int_{S_n(\phi)}\omega_c=\langle S_n(\phi), c \rangle
\end{align} between the cycle $c$ and the modular symbol, so that this lift can be seen as a Kudla-Millson lift on the symmetric space of $\SL_N(\R)$. Compare with \cite{km86,km87,KMIHES} for the lift defined by Kudla and Millson in the orthogonal setting.

\begin{ex}When $N=2$ and $\phi$ is a suitable finite Schwartz function, the space is $S_\Gamma=Y_0(p)$ and $S_n(\phi)=T_n\{0,\8\}$ is a Hecke translate of the geodesic from $0$ to $\8$.
\end{ex}

The lift \eqref{cohom lift} vanishes if the class $[c]$ is a torsion class, but it can also vanish because of the existence of a functional equation -- see Section \ref{section func}.  When it vanishes, one can instead consider its derivative
\begin{align} \label{derivative intro}
E'_\varphi \colon C_{N-1}(S_\Gamma;\Z) & \longrightarrow \Mtil_{N}(\Gamma') \\
 c & \longmapsto E'_\varphi(c,\tau,\phi) \coloneqq \dds \int_c E_\varphi(z_1,\tau,\phi,s).
\end{align}

We will define a regularized theta lift that is adjoint to $E'_\varphi$ of this lift, and use it to express the Fourier coefficients of its holomorphic projection.

%

\subsection{A regularized lift}

The Mathai-Quillen formalism produces a Thom form, that we used to define the form $\varphi(z,\vbf)$. It also defines a transgression form, that we use to define another differential form 
\begin{align}
\psi(z,\vbf) \in \Omega^{N}(X) \otimes \Scal(V_\R).
\end{align}
By replacing $\varphi$ by $\psi$ in the construction  of \eqref{E varphi}, we obtain another kernel form
\begin{align}
E_\psi(z_1,\tau,\phi) \in \Omega^{N-1}(S)^\Gamma \otimes C^\8(\HH),
\end{align}
that transforms like a modular form of weight $N-2$.

Let $H_{2-N}(\Gamma')$ be the space of weak harmonic Maass forms of weight $2-N$ for the congruence subgroup $\Gamma' \subseteq \SL_2(\Z)$. Following the seminal work of Borcherds \cite{borcherds1,borcherds2} and Bruinier-Funke \cite{BF04}, we use $E_\psi$ to define a regularized lift of $f \in H_{2-N}(\Gamma')$ by defining
\begin{align} \label{expression one}
\Phi(z_1,f,\phi) \coloneqq \CT \left \{ \lim_{T\rightarrow \8} \int_{F_T} E_{\psi}(z_1,\tau,\phi)f(\tau) y^{-\rho} \dxdy \right \},
\end{align}
where $F_T$ is a truncated fundamental domain in $\HH$ for $\Gamma'$. For each cusp $r \in \Gamma' \backslash \PP^1(\Q)$, there is a matrix $\gamma_r \in \SL_2(\Z)$ such that $\gamma_r \8=r$. Let $\phi_r \in \Scal(V_{\A_f})$ be the finite Schwartz function $\phi_r \coloneqq \omega_f(\gamma_r^{-1}) \phi$, where $\omega_f$ is the finite Weil representation. Let $a^+_r(f,n)$ be the Fourier coefficients of the holomorphic part 
\begin{align}
(f \vert_{\gamma_r})^+= \sum_{\substack{n \in \Q \\ n \gg -\8}} a^+_r(f,n)e(n\tau)
\end{align}
of $f \vert_{\gamma_r}$. Define the locus
\begin{align} 
S_f(\phi)& \coloneqq \bigcup_{r \in \Gamma' \backslash \PP^1(\Q)} \bigcup_{\substack{ n \in \Q_{>0} \\ a^+_r(f,-n) \neq 0}} S_n(\phi_r) \subset S_\Gamma,
\end{align}
where $S_n(\phi)$ are the special cycles in \eqref{special cycles}.

\begin{thm} \label{maintheorem1} The right-hand side of \eqref{expression one} converges to a differential form 
\begin{align}
\Phi(z_1,f,\phi) \in \Omega^{N-1}(S_\Gamma \smallsetminus S_f(\phi)),
\end{align}
smooth outside of $S_f(\phi)$. Moreover, it is integrable over any smooth chain $c\in C_{N-1}(S_\Gamma)$ that intersects $S_f(\phi)$ transversally.
\end{thm}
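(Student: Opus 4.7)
The plan is to adapt the regularization strategy of Borcherds and Bruinier--Funke \cite{BF04} to the $\SL_N$-setting. First I would split the truncated fundamental domain $F_T$ into a compact core and cuspidal neighborhoods, and decompose $f = f^+ + f^-$ into holomorphic and non-holomorphic parts. On the compact core the integrand is bounded in $\tau$, so the integral is absolutely convergent and gives a smooth $(N-1)$-form on all of $S_\Gamma$ with no regularization needed. Since $f^-$ has exponential decay at every cusp and $E_\psi(z_1,\cdot,\phi)$ has at most polynomial growth in $y$, the $f^-$-contribution from each cuspidal neighborhood is likewise absolutely convergent and produces a smooth form on all of $S_\Gamma$.

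The bulk of the work concerns the $f^+$-contribution on each cuspidal neighborhood. At a cusp $r$ I would change variables by $\gamma_r$ to reduce to an integration over a truncated strip, with integrand $E_\psi(z_1,\tau,\phi_r)(f|_{\gamma_r})^+(\tau)y^{-\rho-2}$. Using the Fourier expansions in $x$ of both factors and Fourier orthogonality, this reduces to a finite sum, indexed by the frequencies $-n$ appearing in the principal part of $(f|_{\gamma_r})^+$, of one-dimensional $y$-integrals
\[
a^+_r(f,-n) \int_{Y_0}^\infty c_{r,n}(z_1,y,\phi_r)\, e^{-2\pi n y}\, y^{-\rho-2}\, dy,
\]
where $c_{r,n}$ is the $n$-th Fourier coefficient in $\tau$ of $E_\psi(z_1,\tau,\phi_r)$, plus analogous terms coming from the non-polar part of $(f|_{\gamma_r})^+$.

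From the Mathai--Quillen construction of $\psi$, the coefficient $c_{r,n}(z_1,y,\phi_r)$ is a theta-like sum
\[
\sum_{\vbf \in V,\ Q(\vbf)=n} \psi(z_1,\vbf)\, \phi_r(\vbf)
\]
(up to explicit scalar factors) carrying a Gaussian factor of the form $e^{-2\pi y(|\vbf|_{z_1}^2 - n)}$, where the majorant satisfies $|\vbf|_{z_1}^2 \geq n$ with equality precisely on the cycle $S_{[\vbf]}$. Hence for $z_1$ outside $S_n(\phi_r)$ the $y$-integral converges absolutely, while as $z_1$ approaches $S_n(\phi_r)$ the Gaussian ceases to decay along the offending vectors and a logarithmic singularity develops after the $\CT$-regularization; the $n=0$ term is handled by the $\rho$-regularization exactly as in Borcherds. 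For $z_1 \in S_\Gamma \smallsetminus S_f(\phi)$ we remain away from every $S_n(\phi_r)$ with $a^+_r(f,-n) \neq 0$, so every term is smooth in $z_1$ and the regularized limit exists, producing a smooth $(N-1)$-form on $S_\Gamma \smallsetminus S_f(\phi)$.

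Finally, for integrability over a chain $c \in C_{N-1}(S_\Gamma)$ transverse to $S_f(\phi)$, the cycles $S_n(\phi_r)$ have codimension $N-1$ and $c$ has dimension $N-1$, so the transverse intersection is a finite set of points; it suffices to work locally near each intersection point $p \in c \cap S_n(\phi_r)$. In normal coordinates to $S_n(\phi_r)$ at $p$, the leading singularity of $\Phi$ is of logarithmic type in the distance to $S_n(\phi_r)$, which is integrable in $(N-1)$-dimensional polar coordinates normal to the cycle. The main obstacle in executing this plan will be the precise identification of the local singularity of $\Phi$ along each $S_n(\phi_r)$, which requires a careful local analysis of the Mathai--Quillen transgression form $\psi(z_1,\vbf)$ in coordinates adapted to the cycle.
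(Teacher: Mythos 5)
Your treatment of convergence and smoothness away from $S_f(\phi)$ follows essentially the paper's route: split $F_T$ into a compact core and cuspidal strips, separate $f^+$ from $f^-$, reduce by Fourier orthogonality to $y$-integrals of the Fourier coefficients of $E_\psi$, and control these by the Gaussian factor $e^{-2\pi y R(z_1,\vbf)}$ with $R(z_1,\vbf)=\lVert g_1^{-1}v\rVert\,\lVert g_1^t w\rVert-Q(\vbf)\geq 0$, vanishing exactly on $S_\vbf$. One point you pass over: for fixed $n$ there are in general infinitely many $\vbf\in\supp(\phi_r)$ with $Q(\vbf)=n$, so termwise convergence of the $y$-integrals is not enough; you need a lower bound for $R(z_1,\vbf)$ growing linearly in $\lVert\vbf\rVert$, uniformly on a compact neighbourhood (this is Lemma \ref{lemAU}, $C_U\lVert\vbf\rVert\leq R(z_1,\vbf)+n_0$), to sum the estimates. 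This is a repairable omission rather than a wrong idea.

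The genuine gap is in the integrability claim. You assert that the singularity of $\Phi$ along $S_n(\phi_r)$ is logarithmic and integrate it in ``$(N-1)$-dimensional polar coordinates normal to the cycle''; but $S_n(\phi_r)$ has codimension $N-1$ and $c$ has dimension $N-1$, so a transverse intersection is a finite set of points and there is no normal bundle inside $c$ to integrate over. The correct local model is taken on the $N$-dimensional lift $\bar c=c\times\R_{>0}$ in $X$, transverse to the codimension-$N$ submanifold $X_\vbf$. Moreover the singularity is not logarithmic: the paper shows that the map $(T,u)\mapsto \tfrac{1}{2}\left(u^{-1}g_T^{-1}v-ug_T^tw\right)$ is a local diffeomorphism onto a neighbourhood of $0\in\R^N$ (this needs Lemma \ref{rank lemma} on the rank of $f^-_\vbf$ together with transversality), and in these coordinates the pulled-back integrand is bounded by $\lvert x_l\rvert/\lVert x\rVert^{d+1}\leq\lVert x\rVert^{-d}$ with $0\leq d\leq N-1$, a power singularity of order up to $N-1$ in $N$ variables, which is only just integrable. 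Getting this bound uses the explicit Mathai--Quillen formula for $\alpha^0$ --- in particular the linear factor $\langle g^{-1}v-g^tw,e_l\rangle$ vanishing on $X_\vbf$, which lowers the order by one --- and the asymptotics $\beta_{\frac{d+1}{2}}(r)=O(r^{-\frac{d+1}{2}})$ as $r\to 0$. You explicitly defer this local analysis as ``the main obstacle,'' but it is exactly where the content of the integrability statement lies, so the proposal is incomplete at the crux and, as stated, the asserted singularity type is wrong.
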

\begin{rmk}
When $N=2$ and $S=Y_0(p)$, then $S_f(\phi)$ is a collection of infinite geodesics. It would be interesting to understand if there is a relation to the locally harmonic Maass forms that appear in \cite{LS22}.
\end{rmk}
 
 \noindent We want to relate this regularized lift to \eqref{derivative intro}, the derivative of the previous theta lift. At the cusp $r \in \Gamma' \backslash \PP^1(\Q)$, the derivative $E'_\varphi(c,\tau,\phi)$ has a Fourier expansion
 \begin{align}
E'_\varphi(c,\tau,\phi)\vert_{\gamma_r}=A(c,\phi_r)+\log(y)B(c,\phi_r)+\sum_{n \in \Q} C_n(c,\phi_r,y)  e(n\tau).
\end{align}

\begin{thm}\label{main formula 1 intro}Let $f \in H_{2-N}(\Gamma')$ be a harmonic weak Maass form and $g=\xi_{2-N}(f) \in S_N(\Gamma')$ its image under the $\xi$-operator. For any smooth cycle $c$ transverse to $S_f(\phi)$ we have 
\begin{align}
2N (-1)^{N-1}\int_{c}\Phi(z_1,f,\phi) =  \langle E'_\varphi(c,\phi),g \rangle+ \sum_{ r \in \Gamma' \backslash \PP^1(\Q)}w_r \kappa_r(c,f,\phi)
\end{align}
where
\begin{align}
\kappa_r(c,f,\phi) & \coloneqq A(c,\phi_r)a^+_r(f,0)  + \lim_{T \rightarrow \8}\sum_{n \in \Q_{>0}}C_n(c,\phi_r,T)  a^+_r(f,-n).
\end{align}
\end{thm}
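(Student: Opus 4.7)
The proof follows the paradigm of Bruinier--Funke \cite{BF04}: unfold the outer integral over $c$, apply a Mathai--Quillen type differential identity relating $E_\psi$ to $\dds E_\varphi(\cdot,s)$, integrate by parts in $\tau$ on the truncated fundamental domain $F_T$, and identify the boundary contributions with the cuspidal terms $\kappa_r$.

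The first step is to exchange the orders of integration. Since $c$ meets $S_f(\phi)$ transversally, Theorem \ref{maintheorem1} ensures that $\Phi(z_1,f,\phi)$ is integrable along $c$; combined with the double regularization through $T$ and the Mellin parameter $\rho$, Fubini gives
\begin{align}
\int_c \Phi(z_1,f,\phi) = \CT \lim_{T \to \infty} \int_{F_T} E_\psi(c,\tau,\phi) f(\tau) y^{-\rho} \dxdy,
\end{align}
where $E_\psi(c,\tau,\phi) \coloneqq \int_c E_\psi(z_1,\tau,\phi)$ transforms with weight $N-2$, so that the full integrand is $\Gamma'$-invariant.

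The heart of the argument is a transgression identity of Mathai--Quillen type on the kernel side. Since $\psi$ is the transgression of the Thom form $\varphi$ in the construction of \cite{rbrsln}, one expects an identity of the shape
\begin{align}
2iy^N \dbar \left( y^{2-N} E_\psi(z_1,\tau,\phi) \right) = 2N(-1)^{N-1} \dds E_\varphi(z_1,\tau,\phi,s),
\end{align}
which accounts for the constant $2N(-1)^{N-1}$ in the statement. Substituting this into the integrand and applying Stokes' theorem in $\tau$ on $F_T$, the operator $\dbar$ transfers onto $f$; since $\xi_{2-N}(f) = g$, the bulk term rearranges to the regularized Petersson pairing
\begin{align}
\langle E'_\varphi(c,\phi), g\rangle = \lim_{T\to\infty} \int_{F_T} E'_\varphi(c,\tau,\phi) \overline{g(\tau)} y^N \dxdy,
\end{align}
which converges at $\rho=0$ because $g$ is a cusp form.

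The remaining boundary contributions from Stokes are horizontal arcs at height $y=T$ over each cusp $r \in \Gamma' \backslash \PP^1(\Q)$; the vertical walls of $F_T$ cancel pairwise by $\Gamma'$-invariance. Changing variables by $\gamma_r$ at each cusp, we substitute the Fourier expansion of $E'_\varphi(c,\phi)\vert_{\gamma_r}$ and the decomposition $f\vert_{\gamma_r} = f^+ + f^-$ into holomorphic and non-holomorphic parts. Integrating in $x$ over a period of width $w_r$ selects matching Fourier modes: the contribution of $f^-$ decays exponentially in $T$ and vanishes, the $\log(y) B$ piece together with the positive-index modes of $f^+$ are annihilated by $\CT_{\rho=0}$, and only the constant term $A(c,\phi_r) a_r^+(f,0)$ and the pairing of $C_n(c,\phi_r,T)$ with the principal-part coefficients $a_r^+(f,-n)$ survive, producing $w_r \kappa_r(c,f,\phi)$.

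The main obstacle lies in this last step: carefully tracking the interplay between the Mellin regularization $\CT$ at $\rho=0$ and the limit $T\to\infty$, and verifying that all spurious boundary contributions (from $\log(y) B$, the non-principal holomorphic modes, and the polynomial growth of $C_n(c,\phi_r,y)$ in $T$) cancel or are killed by the regularization. A secondary technical point is establishing the precise constant $2N(-1)^{N-1}$ in the transgression identity, which requires unraveling the normalization of $\psi$ relative to $\varphi$ in the Mathai--Quillen construction of \cite{rbrsln}.
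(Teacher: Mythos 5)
Your overall strategy is the one the paper uses: exchange the two integrals (justified by transversality and Theorem \ref{proplim2}), relate the kernel $E_\psi$ to $E'_\varphi$ by a first-order differential identity in $\tau$, apply Stokes on $F_T$ so that the lowering operator lands on $f$ and produces $\overline{g}\,y^N$, and read off $\kappa_r$ from the horizontal boundary arcs at height $T$ via orthogonality of Fourier modes. The cuspidal bookkeeping you describe (vanishing of the $f^-$ contribution, cancellation of the $\log(T)B$ term, survival of exactly $A(c,\phi_r)a_r^+(f,0)$ and $\sum_n C_n(c,\phi_r,T)a_r^+(f,-n)$) also matches the paper's proof.

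However, the central identity you posit is written in the wrong direction, and as stated it does not set up the integration by parts you then perform. The paper's Proposition \ref{lemmaderiv} reads
\begin{align}
\int_{c}E_{\psi}(z_1,\tau,\phi)= \frac{(-1)^{N-1}}{2N}\, L_N E'_\varphi(c,\tau,\phi), \qquad L_N=-2iy^2\dbar ,
\end{align}
i.e.\ the lowering operator acts on $E'_\varphi$ and produces $E_\psi$, not the other way around. This is what lets one replace $E_\psi f\,\dxdy$ by $\frac{(-1)^{N-1}}{2N}(L_N E'_\varphi)f\,\dxdy$ and then use
\begin{align}
d_\HH\bigl(E'_\varphi(c,\tau,\phi)f(\tau)\,d\tau\bigr)=-(L_NE'_\varphi)f\,\dxdy+E'_\varphi\,\overline{\xi_{2-N}f}\,y^N\dxdy ,
\end{align}
so that the bulk term becomes $\langle E'_\varphi(c,\phi),g\rangle$. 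With your identity ($\dbar$ acting on $E_\psi$, output $E'_\varphi$) one cannot substitute for $E_\psi$ in the integrand without inverting $\dbar$, and integrating by parts would place a raising-type operator on $f$ rather than $\xi_{2-N}$. You also defer the proof of the identity entirely, whereas it is the main lemma: it follows from the Mathai--Quillen transgression formula $d_X\Theta_\alpha=L_N\Theta_\varphi$ via $d_X(\Theta_\alpha\log u)=L_N\Theta_\varphi\log u+(-1)^{N-1}\Theta_\psi$ and $\dds L_N(\Theta_\varphi u^{-2Ns})=-2NL_N\Theta_\varphi\log u$, the essential point being that the exact term integrates to zero over the cycle $c$; this is where the constant $2N(-1)^{N-1}$ comes from and why the statement holds only after integrating over a cycle. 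A smaller imprecision: the nonnegative-index modes of $f^+$ are not killed by $\CT$ but by the decay of $C_n(c,\phi_r,T)$ for $n\leq 0$ as $T\rightarrow\8$, while the $\log(T)$ divergence is removed by the explicit subtraction of $\log(T)\int_cQ_\psi$ (equivalent to $\CT$ by Proposition \ref{proplim}).
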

\noindent A form $F$ in $\Mtil_{N}(\Gamma')$ transforms like a modular form and is of moderate growth at each cusp, so that
\begin{align}
\langle F,g \rangle =\int_{\Gamma' \backslash \HH} F(\tau)\overline{g(\tau)}y^{N}\dxdy
\end{align}
converges for any cusp form $g \in S_N(\Gamma')$. In particular, the pairing $ \langle E'_\varphi(c,\phi),g \rangle$ in the previous theorem converges. The pairing defines a functional in $S_N(\Gamma')^\vee$, which is indentified with $S_N(\Gamma')$ via the Petersson inner product between holomorphic modular forms. Hence, there exists a unique cusp form $F^\hol \in S_N(\Gamma')$ such that
\begin{align}
\langle F,g \rangle=\langle F^\hol,g \rangle,
\end{align}
where the right-handside is the Petersson product. We call $F^\hol$ the holomorphic projection of $F$. 

Let us assume that $\Gamma'=\Gamma_0(l)$. For $n \in \N$, let $P_n \in S_N(\Gamma_0(l))$ be $n$-th Poincaré series of weight $N$, that satisfies
\begin{align}
\langle P_{n},g \rangle = \frac{(N-2)!}{(4\pi n)^{N-1}}a_n(g)
\end{align}
for any cusp form $g \in S_N(\Gamma_0(l))$; see \cite[\S 1.2]{bruinierborch}. (Note that when $N=2$ then the Poincar\'e series needs to be regularized). There is a harmonic weak Maass form 
\begin{align}
Q_{-n}=q^{-n}+a^+(Q_{-n},0) +O(q) \in H_{2-N}(\Gamma_0(l)),
\end{align} 
see \cite[\S6]{bor} and \cite[\S 1.3]{bruinierborch}, such that
\begin{align}
\xi_{2-N}(Q_{-n})=\frac{(4\pi n)^{N-1}}{(N-2)!}P_n.
\end{align}
In particular, we have $\langle \xi_{2-N}(Q_{-n}),g \rangle=a_n(g)$, and we deduce the following.

\begin{cor} \label{cor holo into} The holomorphic projection of $E'_\varphi(c,\phi)$ has the Fourier expansion
\begin{align}
 E'_\varphi(c,\phi)^\hol= \sum_{n =1}^\8 a_n(c,\phi)e(n \tau) \in S_N(\Gamma_0(l)),
\end{align}
where
\begin{align}
a_n(c,\phi) = 2N (-1)^{N-1}\int_{c}\Phi(z_1,Q_{-n},\phi) - \sum_{ r \in \Gamma_0(l) \backslash \PP^1(\Q)}w_r \kappa_r(c,Q_{-n},\phi).
\end{align}
\end{cor}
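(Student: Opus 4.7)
The plan is to deduce the corollary by specializing the main formula above to $f = Q_{-n}$ and combining it with the defining property of the holomorphic projection. Since $E'_\varphi(c,\phi)$ lies in $\Mtil_N(\Gamma_0(l))$, its holomorphic projection $E'_\varphi(c,\phi)^\hol \in S_N(\Gamma_0(l))$ is by definition the unique cusp form satisfying $\langle E'_\varphi(c,\phi),g\rangle = \langle E'_\varphi(c,\phi)^\hol,g\rangle$ for every $g \in S_N(\Gamma_0(l))$. Being a cusp form, it has a Fourier expansion with no constant term, starting at $n = 1$; the task is therefore to identify the coefficient $a_n$ for each $n \geq 1$.

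To isolate the $n$-th coefficient, I would first apply the main formula with $f = Q_{-n}$, which gives
$$2N(-1)^{N-1}\int_c \Phi(z_1, Q_{-n}, \phi) = \langle E'_\varphi(c, \phi), \xi_{2-N}(Q_{-n})\rangle + \sum_{r \in \Gamma_0(l)\backslash \PP^1(\Q)} w_r\, \kappa_r(c, Q_{-n}, \phi).$$
By the defining property of the holomorphic projection, the middle pairing is unchanged if $E'_\varphi(c,\phi)$ is replaced by $E'_\varphi(c,\phi)^\hol$. Using the relation $\xi_{2-N}(Q_{-n}) = \frac{(4\pi n)^{N-1}}{(N-2)!}P_n$ together with the reproducing normalization $\langle P_n,g\rangle = \frac{(N-2)!}{(4\pi n)^{N-1}}a_n(g)$ of the Poincaré series, this pairing collapses to $a_n\bigl(E'_\varphi(c,\phi)^\hol\bigr)$. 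Rearranging the displayed identity then yields exactly the expression for $a_n(c,\phi)$ stated in the corollary.

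Once the main formula is in hand, the argument is essentially formal, and no substantial new ingredients are required beyond the existence of the harmonic weak Maass forms $Q_{-n}$ with the stated $\xi$-image (cited to \cite{bor} and \cite{bruinierborch}) and the standard reproducing property of Poincaré series. The only point needing attention is to keep track of the Petersson-pairing conventions so that the constant $\frac{(4\pi n)^{N-1}}{(N-2)!}$ relating $\xi_{2-N}(Q_{-n})$ to $P_n$ cancels cleanly with the reciprocal constant from the reproducing formula; since both constants are positive reals, any mild issue with linearity versus conjugate-linearity in the second argument will not affect the final formula.
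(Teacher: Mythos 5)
Your proposal is correct and follows exactly the route the paper intends: specialize Theorem \ref{main formula 1} to $f=Q_{-n}$, use the defining property of the holomorphic projection to replace $E'_\varphi(c,\phi)$ by $E'_\varphi(c,\phi)^\hol$ in the Petersson pairing, and invoke $\langle \xi_{2-N}(Q_{-n}),g\rangle=a_n(g)$ (from the stated relation between $Q_{-n}$ and the Poincar\'e series $P_n$) to identify that pairing with the $n$-th Fourier coefficient. The paper gives no separate proof beyond this deduction, so your argument is essentially the paper's own.
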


\subsection{Periods over tori}
In the last section of the paper, we consider the case when $c$ is a cycle obtained by embedding a totally real field $F$.  Let $D_F$ be the discriminant and $\Ocal$ the ring of integers . Assume that $F$ is of even degree $N$ and let\footnote{The existence of a totally odd character implies that the field does not contain a unit of negative norm. In particular, it excludes all fields of odd degree $N$, since $-1$ is a unit of norm $(-1)^N$} $\chi \colon \Cl(F)^+ \longrightarrow \{ \pm 1\}$ be a totally odd Hecke character on the narrow class group of $F$. For an ideal $\cfrak$ of $\Ocal$ dividing a prime $p$, consider the non-holomorphic (holomorphic at $s=0$) weight $N$ modular form for $\Gamma'=\Gamma_0(p)$
\begin{align}
\Ecal_{\chi,\cfrak}(\tau,s) \coloneqq C(s) \sum_{[\afrak] \in \Cl(F)^+}\chi(\afrak) N(\afrak)^{1+2s} \sum_{(m,n) \in \afrak\cfrak \times \afrak /\Ocal^{\times,+}} \frac{y^{Ns}}{\N(v\tau+w) \vert \N(v\tau+w) \vert^{2s}},
\end{align}
where $C(s)$ is a term involving the discriminant of $F$, powers of $\pi$ and of the Gamma function. With this normalization, it satisfies the functional equation
\begin{align}
\Ecal_{\chi,\cfrak}(\tau,-s)=\chi(\dfrak\cfrak)N(\dfrak\cfrak)^{2s}\Ecal_{\chi,\cfrak}(\tau,s).
\end{align}
In particular, if $\chi(\cfrak)=-\chi(\dfrak)$, then the Eisenstein series $\Ecal_{\chi,\cfrak}(\tau,s)$ vanishes at $s=0$. Instead, we consider its derivative
\begin{align}
\Ecal'_{\chi,\cfrak}(\tau) \coloneqq \dds \Ecal_{\chi,\cfrak}(\tau,s).
\end{align}
It is a non-holomorphic modular form of weight $N$ for $\Gamma_0(p)$ and has a Fourier expansion
\begin{align}
\Ecal'_{\chi,\cfrak}(\tau)= \log(\alpha(\chi,\cfrak))+A_{\chi}+B_{\chi}\log(y)+\sum_{n=1}^\8\log(J_{\chi,\cfrak}(n))e(n \tau) + \sum_{n \in \Z} a_{\chi,\cfrak}(n,y)e(n \tau)
\end{align}
where $\alpha(\chi,\cfrak), J_{\chi,\cfrak}(n)$ and $B_{\chi}$ are algebraic numbers, and $A_{\chi}$ is transcendental. Let $\epsilon$ be a basis of $F$ over $\Q$, and 
\begin{align}
c_\epsilon \in Z_{N-1}(S_\Gamma)
\end{align} be a cycle obtained from the embedding of the torus $F^\times$ in $\GL_N(\Q)$ induced by $\epsilon$. For a suitable finite Schwartz function $\phi_{\chi,\cfrak}$, the Eisenstein series can be obtained as the image of the cycle $c_\epsilon$ by the lift \eqref{cohom lift}, so that we have
\begin{align}
\Ecal_{\chi,\cfrak}(\tau,s)=E_\varphi(c_\epsilon,\tau,\phi_{\chi,\cfrak},s).
\end{align}

\begin{thm} \label{main thm intro} Let $f \in H_{2-N}(\Gamma_0(p))$ be a harmonic weak Maass form and $g=\xi_{2-N}(f) \in S_N(\Gamma_0(p))$. Suppose that the principal parts of $f$ at all cusps have rational coefficients. Then
\begin{align}
\int_{c_\epsilon}\Phi(z_1,f,\phi_{\chi,\cfrak}) = \log(\alpha(f,\chi,\cfrak))-\frac{1}{2N} \langle \Ecal'_{\chi,\cfrak},g \rangle -\frac{A_{\chi}}{2N} \left (a^+_\8(f,0)+p \frac{\chi(\cfrak)}{\N(\cfrak)} a^+_0(f,0) \right ),
\end{align}
for an algebraic number $\alpha(f,\chi,\cfrak) \in \Qbar$.
\end{thm}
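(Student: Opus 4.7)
The plan is to specialize Theorem \ref{main formula 1 intro} to the cycle $c=c_\epsilon$, the finite Schwartz function $\phi=\phi_{\chi,\cfrak}$ and $\Gamma'=\Gamma_0(p)$, and then match each term of the resulting identity with the corresponding piece of the Fourier expansion of $\Ecal'_{\chi,\cfrak}$ at the two cusps $\{\8,0\}$ of $\Gamma_0(p)$. Applying the theorem, together with the identification $E_\varphi(c_\epsilon,\tau,\phi_{\chi,\cfrak},s)=\Ecal_{\chi,\cfrak}(\tau,s)$, which after differentiating at $s=0$ yields $E'_\varphi(c_\epsilon,\tau,\phi_{\chi,\cfrak})=\Ecal'_{\chi,\cfrak}(\tau)$, produces
\begin{align}
2N(-1)^{N-1}\int_{c_\epsilon}\Phi(z_1,f,\phi_{\chi,\cfrak}) = \langle \Ecal'_{\chi,\cfrak},g\rangle + \sum_{r\in\{0,\8\}} w_r\,\kappa_r(c_\epsilon,f,\phi_{\chi,\cfrak}).
\end{align}
Since $F$ has even degree, $(-1)^{N-1}=-1$, so dividing by $-2N$ already yields the prefactor $-\tfrac{1}{2N}$ in front of both $\langle \Ecal'_{\chi,\cfrak},g\rangle$ and the $\kappa_r$ contributions, matching the sign structure of the statement.

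Next I read off the functions $A$ and $C_n$ from the given expansion of $\Ecal'_{\chi,\cfrak}$. At the cusp $r=\8$ the expansion directly gives
\begin{align}
A(c_\epsilon,\phi_{\chi,\cfrak})=\log\alpha(\chi,\cfrak)+A_{\chi},\qquad C_n(c_\epsilon,\phi_{\chi,\cfrak},y)=\log J_{\chi,\cfrak}(n)+a_{\chi,\cfrak}(n,y)\quad(n>0).
\end{align}
At the cusp $r=0$ the relevant Schwartz function becomes $\omega_f(\gamma_0^{-1})\phi_{\chi,\cfrak}$; its Fourier expansion has the same shape, but is modified by the factor dictated by the functional equation $\Ecal_{\chi,\cfrak}(\tau,-s)=\chi(\dfrak\cfrak)\N(\dfrak\cfrak)^{2s}\Ecal_{\chi,\cfrak}(\tau,s)$ and the Atkin-Lehner type involution exchanging $\8$ with $0$. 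Combined with the cusp width of $\Gamma_0(p)$ at $0$, this twist produces the factor $p\chi(\cfrak)/\N(\cfrak)$ that stands in front of $a^+_0(f,0)$ in the conclusion.

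Finally I analyze the $T\to\8$ limit entering the definition of $\kappa_r$. Because $f$ is weak harmonic, only finitely many $a^+_r(f,-n)$ with $n>0$ are nonzero, so the inner sum in $\kappa_r$ is finite. The non-holomorphic coefficients $a_{\chi,\cfrak}(n,T)$ with $n>0$ decay as $T\to\8$ by the standard asymptotics for derivatives of real-analytic Eisenstein series, so only the algebraic logarithmic pieces $\log\alpha(\chi,\cfrak)$ and $\log J_{\chi,\cfrak}(n)$ survive in the limit. By the rationality assumption on the principal parts $a^+_r(f,-n)$, together with the algebraicity of $\alpha(\chi,\cfrak)$ and of $J_{\chi,\cfrak}(n)$, the finite linear combination of these contributions at both cusps is the logarithm of an algebraic number, which after absorbing the factor $-1/(2N)$ and the sign $(-1)^{N-1}$ is renamed $\log\alpha(f,\chi,\cfrak)$ with $\alpha(f,\chi,\cfrak)\in\Qbar$. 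The transcendental constants $A_{\chi}$ occurring in $A(c_\epsilon,(\phi_{\chi,\cfrak})_r)$ at the two cusps assemble, through the same twist, into the stated combination $A_{\chi}(a^+_\8(f,0)+p\chi(\cfrak)\N(\cfrak)^{-1}a^+_0(f,0))$.

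The main obstacle is the computation at the cusp $r=0$: one has to make $\omega_f(\gamma_0^{-1})\phi_{\chi,\cfrak}$ explicit, derive the Fourier expansion of $\Ecal'_{\chi,\cfrak}\vert_{\gamma_0}$ from it, and verify that the transcendental constant at $0$ is again $A_{\chi}$, so that the two cusp contributions combine into the single prefactor $a^+_\8(f,0)+p\chi(\cfrak)\N(\cfrak)^{-1}a^+_0(f,0)$. A secondary technical point is a uniform control of the decay of the non-holomorphic Fourier coefficients of $\Ecal'_{\chi,\cfrak}$ as $T\to\8$, in order to legitimately discard them from the limit and leave only algebraic contributions.
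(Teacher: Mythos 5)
Your proposal is correct and follows essentially the same route as the paper: apply Theorem \ref{main formula 1 intro} with $c=c_\epsilon$, identify $E'_\varphi(c_\epsilon,\tau,\phi_{\chi,\cfrak})$ with $\Ecal'_{\chi,\cfrak}(\tau)$, read off $A$ and $C_n$ from the Fourier expansions at $\8$ and at $0$ (the latter via $\Ecal'_{\chi,\cfrak}\vert_{\gamma_0}=\tfrac{\chi(\cfrak)}{\N(\cfrak)}\Ecal'_{\chi,\bar{\cfrak}}(\tau/p)$, with $w_0=p$), discard the exponentially decaying non-holomorphic coefficients in the $T\to\8$ limit, and package the surviving logarithms of algebraic numbers into $\log\alpha(f,\chi,\cfrak)$ using the rationality of the principal parts and Shintani's theorem. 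The points you flag as remaining work (the explicit cusp-$0$ expansion, including the $-B_\chi\log p$ shift from $\log(y/p)$, and the decay of $a_{\chi,\cfrak}(n,T)$) are exactly the details the paper supplies.
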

\noindent The algebraic number $\alpha(f,\chi,\cfrak)$ is a $d$-th root of a rational number, so if we replace $f$ by $df$, then $\alpha(df,\chi,\cfrak)=\alpha(f,\chi,\cfrak)^d \in \Q$. We use the fact that the $L$-function $L(\chi,0)$ is a rational number, by Shintani's Theorem.

%

\subsection{Overview} In Section \ref{section on theta lift}, we recall the construction of the theta lift on $S$, using differential forms obtained from the Mathai-Quillen formalism, see \ref{Mathai-Quillen}. The special cycles $S_{[\vbf]}$ are defined in \ref{section special cycles}. In  \ref{definition of psi}, we define the form $\psi$, that is used to define the kernel $E_\psi(z_1,\tau,\phi)$, and the regularized lift $\Phi(z_1,f,\phi)$ in \ref{definition of regularized lift}. The integrability of $\Phi(z_1,f,\phi)$ is proved in Theorem \ref{proplim2}. The relation to the derivative of $E_{\varphi}(z_1,\tau,\phi)$ is proved in Theorem \ref{main formula 1}, and follows from the fact that $L_N E'_{\varphi}(z_1,\tau,\phi)+2N(-1)^{N-1}E_{\psi}(z_1,\tau,\phi)$ is an exact form by Proposition \ref{lemmaderiv}, where $L_N$ is the lowering operator.

\section{Theta lift on $\SL_N$} \label{section on theta lift}
We recall the construction of theta lift from \cite{rbrsln}. Let $W$ be an $N$-dimensional $\Q$-vector space. Let $X$ be the space of positive-definite quadratic forms on $W_\R$. After fixing a basis $W \simeq \Q^N$, we can identify $X$ with the space of positive-definite symmetric $N \times N$ matrices. The group $G\coloneqq \GL_N(\R)^+$ acts on $X$ by $z \longmapsto gzg^t$, and the stabilizer of $\id_{N\times N}$ is $K\coloneqq \SO(N)$. Thus, we have
\begin{align}
X \simeq G/K.
\end{align}
It is a smooth manifold of dimension $\frac{N^2+N}{2}$. Let $S \subset X$ be the submanifold of matrices of determinant one, that can be described as the homogeneous space
\begin{align}
S \simeq G^1/K,
\end{align}
where $G^1 \coloneqq \SL_N(\R)$. We have a  diffeomorphism
\begin{align}
S \times \R_{>0} \longrightarrow X, \qquad  (z_1,u) \longmapsto z=u^2z_1,
\end{align}
where $u=\det(z)^{\frac{1}{2N}}$. We view $X$ as a bundle of rank one over $S$, with the projection map
\begin{align}
\pi \colon X \longrightarrow S, \quad z_1=\pi(z)\coloneqq u^{-2}z.
\end{align}
Let $V \coloneqq W \times W^\vee$ be the quadratic space with the quadratic form of signature $(N,N)$
\begin{align}
Q(\vbf,\vbf') \coloneqq \langle v,w' \rangle+\langle v',w \rangle \qquad \vbf=(v,w), \ \vbf'=(v',w'),
\end{align}where $\langle v,w \rangle \coloneqq v^tw$. We set
\begin{align}
Q(\vbf) \coloneqq  \frac{1}{2}Q(\vbf,\vbf)=\langle v,w \rangle.
\end{align} 
The group $G$ acts on a vector $\vbf=(v,w)$ in $V$ by
\begin{align}
\rho(g)(\vbf)=(gv,g^{\vee}w),
\end{align}
where $g^{\vee}$ is the induced action on the dual space. This gives a representation $\rho \colon G \longrightarrow \SO(V_\R) =\SO(N,N)$. From now on we will identify $W \simeq \Q^N$ and $W^\vee \simeq \Q^N$ with the standard  inner product $\langle v,w \rangle = v^tw$, so that $g^{\vee}=g^{-t}$.

\subsection{Special cycles in the locally symmetric space} \label{section special cycles} For a place $v$ of $\Q$ let $W_{\Q_v} \coloneqq W \otimes \Q_v$. Similarly, let $V_{\Q_v}=V \otimes \Q_v$ be the quadratic space with the quadratic form $Q$ extended to $\Q_v$. We have fixed a basis of $W$ so that we identify $V_{\Q_v} \simeq \Q_v^{2N}$, and for a finite place $v=p$ let $V_{\Z_p}=\Z_p^{2N}$. Let $V_{\A_f}=\sideset{}{^{'}} \prod_v V_{\Q_v}$ be the finite adeles, where the product is the restricted product with respect to $V_{\Z_p}$.

Let $\Scal(V_{\A_f})$ be the space of finite Schwartz function. The group $\SL_N(\Q)$ acts on finite Schwartz function $\phi \in \Scal(V_{\A_f})$ by $\rho$
\begin{align} \label{invariance of phi}
(\rho(g)\phi)(v,w) = \phi(gv,g^{-t}w).
\end{align}
Let us now fix a finite Schwartz function $\phi \in \Scal(V_{\A_f})$ and $\Gamma \subset \SL_N(\Q)$ be a torsion-free subgroup preserving $\phi$ under $\rho$. We consider the locally symmetric spaces
\begin{align}
S_\Gamma \coloneqq \Gamma \backslash S \qquad \textrm{and} \qquad X_\Gamma \coloneqq \Gamma \backslash X \simeq S_\Gamma \times \R_{>0}.
\end{align}
For each vector $\vbf=(v,w) \in V$ we define the submanifold
\begin{align}
X_\vbf \coloneqq \left \{ z \in X \ \vert zw=v \right \} \subseteq X.
\end{align}
Note that if $z \in X_\vbf$ for some vector $\vbf \in V$, then $v^tzw=v^tv \geq 0$. A vector $\vbf=(v,w)$ is {\em regular} if $v,w$ are both nonzero, and {\em singular} otherwise. Thus, if $\vbf$ is a regular vector with $Q(\vbf)>0$, then $X_\vbf$ is a submanifold of codimension $N$ in $X$. If $\vbf=0$, then $X_\vbf=X$, and otherwise $X_\vbf$ is empty. Let \begin{align}
S_\vbf \coloneqq \pi(X_\vbf) \subseteq S,
\end{align}
be the image under the projection $\pi \colon X \longrightarrow S$. If $\vbf$ is a regular vector with $Q(\vbf)>0$, then the restriction of the projection $\pi \colon X \longrightarrow S$ to $X_\vbf$ is a diffeomorphism onto $S_\vbf$, which is of codimension $N-1$ in $S$. The submanifolds $X_\vbf$ and $S_\vbf$ satisfy the invariance property
\begin{align}
gX_\vbf=X_{\rho(g) \vbf} \qquad \textrm{and} \qquad \ g_1S_\vbf=S_{\rho(g_1) \vbf},
\end{align}
where we recall that $g$ acts on $z$ by $gzg^t$.

\begin{rmk} The representation $\rho \colon G \longrightarrow \SO(V_\R) =\SO(N,N)$ induces an embedding of $X$ as a smooth submanifold of $\D \simeq \SO(N,N)/K\times K$, the Grassmannian of negative $N$-planes that appears in the work of Kudla-Millson.
In particular, we have $X_\vbf =X \cap \D_\vbf$ where $\D_\vbf$ are the planes in $\D$ that are orthogonal to $\vbf$ with respect to $Q$.
\end{rmk}
 Define the group of locally finite cycles 
\begin{align}
Z^{\BM}_{k}(S_\Gamma, \Z) \coloneqq Z_{k}(\overline{S}_\Gamma, \{\8\}, \Z),
\end{align}
where $\overline{S}_\Gamma \coloneqq S_\Gamma \cup \{\8\}$ is the one-point compactification of $S_\Gamma$. The homology of this complex is the Borel-Moore homology.
 Let us denote by $S_{[\vbf]}$ the image of $S_\vbf$ under the projection map $S \longrightarrow S_\Gamma$. By the equivariance, it only depends on the class $[\vbf]$ of $\vbf$ in $\Gamma \backslash L$. It defines a locally finite cycle
\begin{align}
S_{[\vbf]} \in Z^{\BM}_{\frac{N^2-N}{2}}(S_\Gamma; \{\8\}, \Z)
\end{align}
of codimension $N-1$ in $S_\Gamma$. For a positive integer $n$, we define the special cycle
\begin{align}
S_n(\phi) \coloneqq \sum_{\substack{ \vbf \in \Gamma \backslash V \\ Q(\vbf)=n}} \phi(\vbf)S_{[\vbf]} \in  Z^{\BM}_{\frac{N^2-N}{2}}(S_\Gamma, \Z),
\end{align}
where the sum is finite.

\subsection{Differential forms} \label{Mathai-Quillen} Over $X=G/K$ we have a bundle
\begin{align}
E=G \times_{K} V_\R
\end{align}
that consists equivalence classes of pairs $[g,v] \in G \times V_\R $,  modulo the equivalence relation $[g,v] \equiv [gk,k^{-1}v]$. We can view it as a metric bundle by taking the constant metric $\lVert [g,v] \rVert^2 \coloneqq \lVert v \rVert^2$ in each fiber.  This bundle is isometric to the tautological metric bundle
\begin{align}
X \times \R^N, \qquad \lVert (z,v) \rVert^2 \coloneqq v^tz^{-1}v,
\end{align}
where the metric over a point $z \in X$ is given by the positive-definite quadratic form corresponding to $z$.  The isometry is given by the map 
\begin{align}
E \longrightarrow X \times V_\R, \quad  [g,v] \longmapsto (z,gv),
\end{align}
where $z=gK$. The group $G$ acts on $E$ by $g \cdot [g',v]=[gg',v]$, and is $G$-equivariant. In particular it is $\Gamma_\vbf$-equivariant, where $\Gamma_\vbf$ is the stabilizer of $\vbf$ under the action by $\rho$. The quotient $E_{\Gamma_\vbf} \coloneqq {\Gamma_\vbf}  \backslash E$ is a bundle over $X_{\Gamma_\vbf}$. After fixing an orientation of the fibers, the pushforward along the fiber induces the Thom isomorphism
\begin{align}
H^{k+N}(E_{\Gamma_\vbf} ,E_{\Gamma_\vbf}  \smallsetminus E_0;\R) \longrightarrow H^k(X_{\Gamma_\vbf} ;\R),
\end{align}
where $E_0$ is the image of the zero section. Taking $k=0$, a canonical class is given by taking the preimage of a generator of $H^0(X_{\Gamma_\vbf} ;\R) \simeq \R$. It is called the {\em Thom class} of the vector bundle. Mathai and Quillen constructed in \cite{MQ86} an explicit representative
\begin{align}
U \in \Omega^{N}(E)^{G} 
\end{align}
of this class, which is closed, rapidly decreasing along the fiber and $G$-invariant. In particular, it is $\Gamma_\vbf$-invariant and represents the Thom class on $E_{\Gamma_\vbf}$. If $s_\vbf \colon X_{\Gamma_\vbf}  \longrightarrow E_{\Gamma_\vbf} $ is a section that is transverse to the zero section $E_0$, then $s_\vbf^{-1}(E_0)$ is a submanifold of codimension $N$, and $s_\vbf^\ast U$ is a Poincaré dual to $s_\vbf^{-1}(E_0)$.

\subsubsection{Pullback of the Thom form}The submanifold $X_\vbf$ is precisely the zero locus of the section
\begin{align} \label{sectionsv}
s_\vbf \colon X \longrightarrow E, \quad z \longmapsto \left [ g,\frac{g^{-1}v-g^t w}{2}\right ],
\end{align}
where $\vbf=(v,w)$.The pullback by the section $s_\vbf$ gives a closed form
\begin{align}
\varphi^0(z,\vbf)\coloneqq s_\vbf^\ast U \in \Omega^N(X) \otimes C^\8(V_\R)
\end{align} 
which satisfies the invariance property
\begin{align}
g^\ast \varphi^0(z,\vbf)=\varphi^0(z,\rho_{g^{-1}}\vbf).
\end{align}
\begin{rmk}
Note that section is $\Gamma_\vbf$-equivariant, and we can view it as a section $s_\vbf \colon X_{\Gamma_\vbf}  \longrightarrow E_{\Gamma_\vbf}$. Then $\varphi^0(z,\vbf)$ represents the Poincaré dual in $H^{N}(X_{\Gamma_\vbf},\R)$, of the image of $\Gamma_\vbf \backslash X_{\vbf} \hooklongrightarrow \Gamma_\vbf \backslash X$.
\end{rmk}

The construction of Mathai-Quillen is explicit and we can compute this form as follows. The Maurer-Cartan form is the $1$-form
\begin{align}
\vartheta\coloneqq g^{-1}dg \in \Omega^1(G)\otimes \g,
\end{align}
where $\g \simeq \Mat_N(\R)$ is the Lie algebra of $G$. Let $\lambda_{ij} \in \Omega^1(G)$ be the $(i,j)$-entry of 
\begin{align} \label{form lambda}
\lambda \coloneqq \frac{1}{2}(\vartheta+\vartheta^t) \in \Omega^1(G) \otimes \g.
\end{align} For a subset $I=\{i_1<\dots<i_k\} \subseteq \{1,\dots,N\}$ and a function $\sigma \colon I \longrightarrow \{ 1, \dots, N \}$ we define the $k$-form
\begin{align}
    \lambda(\sigma) \coloneqq \lambda_{i_1\sigma(1)} \wedge  \cdots \wedge \lambda_{i_k\sigma(N)} \in \Omega^{k}(G).
\end{align}
For a tuple $\uud=(d_1, \dots,d_N)$ such that $\vert \uud \vert \coloneqq d_1+\cdots+d_N=\vert I \vert$ we set
\begin{align}
\lambda_I(\uud) \coloneqq \sum_{\substack{\sigma \\ \vert \sigma^{-1}(m) \vert=d_m}} \lambda(\sigma),
\end{align}
where the sum is over all functions $\sigma \colon I \longrightarrow \{ 1, \dots, N \}$. When $I=\{ 1, \dots, N \}$ we write
\begin{align}
\lambda(\uud) \coloneqq \lambda_{\{ 1, \dots, N \}}(\uud).
\end{align}
We also define a polynomial $H_{\uud} \in \C[\R^{N}]$ by
\begin{align}
    H_{\uud}(v)  \coloneqq \prod_{m=1}^NH_{d_m}(\langle v,e_m\rangle),
\end{align}
where $H_d$ denotes the single variable Hermite polynomial
\begin{align}
H_d(t) \coloneqq \left (2x-\frac{d}{dt} \right )^d \cdot 1.
\end{align} The first few Hermite polynomials are $H_1(t)=2t$, $H_2(t)=4t^2-2$ and $H_3(t)=8t^3-12t$. One can compute\footnote{See \cite[Proposition.~3.7]{rbrsln}. Note that there is a mistake at several places of {\em loc. cit.}, where $H_d(-\sqrt{\pi}(v+w))$ should have been $H_d(\sqrt{\pi}(v+w))$.} that
\begin{align} \label{formula varphi zero}
\varphi^0(z,\vbf)=\frac{1}{2^{N}\pi^{\frac{N}{2}}}e^{-\pi \lVert g^{-1}v-g^t w \rVert^2} \sum_{\vert \uud \vert=N } H_{\uud}(\sqrt{\pi}(g^{-1}v+g^t w)) \lambda(\uud).
\end{align}
Note that here we view the right-hand side as a $K$-invariant (basic) form on $G$ that descends to $X$.

The form $\varphi^0$ is smooth in $V_\R$ but not rapidly decreasing in $V_\R$. We obtain a rapidly decreasing form by setting
\begin{align}
\varphi(z,\vbf) \coloneqq e^{-2\pi Q(\vbf)} \varphi^0(z,\vbf) \in \Omega^N(X) \otimes \Scal(V_\R).
\end{align}
This form is also closed and satisfies
\begin{align} \label{equivariance}
g^\ast \varphi(z,\vbf)=\varphi(z,\rho_{g^{-1}}\vbf).
\end{align}
We have 
\begin{align} \label{expression varphi}
\varphi(z,\vbf)=\frac{1}{2^{N}\pi^{\frac{N}{2}}}e^{-\pi \lVert g^{-1}v \rVert^2-\pi \lVert g^t w \rVert^2} \sum_{\vert \uud \vert=N } H_{\uud}(\sqrt{\pi}(g^{-1}v+g^t w)) \lambda(\uud).
\end{align}

\subsubsection{The transgression form} The Thom form of Mathai-Quillen comes with a {\em transgression form} on $E$ of degree $N-1$. By pulling back by $s_\vbf$ it gives rise to a form 
\begin{align} \alpha^0(z,\vbf) \in \Omega^{N-1}(X) \otimes C^\8(V_\R),
\end{align}
that satisfies the {\em transgression formula}
\begin{align}
d_X\alpha^0(z,\sqrt{y}\vbf)= y \frac{\partial}{\partial y}\varphi^0(z,\sqrt{y}\vbf);
\end{align}
see \cite[Proposition.~3.12]{rbrsln}.
Similarly to the form $\varphi^0$, the form is given by an explicit formula
\begin{align} \label{formalpha}
\alpha^0(z,\vbf) =-\frac{1}{2^{N}\pi^{\frac{N-1}{2}}} e^{ -\pi \lVert g^{-1}v - g^t w \rVert^2}\sum_{l=1}^N (-1)^{l-1} \langle g^{-1}v-g^t w, e_l \rangle  \sum_{\vert \uud \vert=N-1} H_{\uud}(\sqrt{\pi}(g^{-1}v+g^t w)) \lambda_{I_l}(\uud) \qquad
\end{align}
where $I_l \coloneqq \{1, \dots,N\} \smallsetminus \{l\}$. The form $\alpha^0$ is also not rapidly decreasing in $V_\R$ and we define
\begin{align}
\alpha(z,\vbf) \coloneqq \alpha^0(z,\vbf)\exp(-2\pi Q(\vbf)) \in \Omega^N(X) \otimes \Scal(V_\R).
\end{align}
We have 
\begin{align}
\alpha(z,\vbf) =-\frac{1}{2^{N}\pi^{\frac{N-1}{2}}} e^{ -\pi \lVert g^{-1}v \rVert^2-\pi \lVert g^t w \rVert^2} \sum_{l=1}^N (-1)^{l-1}\langle g^{-1}v-g^t w, e_l \rangle \sum_{\vert \uud \vert=N-1} H_{\uud}(\sqrt{\pi}(g^{-1}v+g^t w)) \lambda_{I_l}(\uud).
\end{align}

\subsection{Weil representation} The next step is to construct theta series using the previously constructed $\Scal(V_\R)$-valued differential forms and the Weil representation. We consider the Weil representation in two different models. Let $\Scal(V_{\Q_v})$ be the space of Schwartz-Bruhat functions. In the first model, the representation $\omega \colon \GL_N(\Q_v) \times \SL_2(\Q_v) \longrightarrow U(\Scal(V_{\Q_v}))$ is defined by the formulas
\begin{align*}
\omega_v\left (g, h \right ) \varphi_v(\vbf)&=\omega_v(h)\varphi_v(\rho(g)^{-1}\vbf), \\
    \omega\left (1, \begin{pmatrix}
        a & 0 \\ 0 & a^{-1}
    \end{pmatrix} \right ) \varphi_v(\vbf)&=\vert a \vert_v^N \varphi_v(a\vbf), \qquad a \in \Q_v^\times, \\
    \omega_v\left (1, \begin{pmatrix}
        1 & n \\ 0 & 1
    \end{pmatrix} \right ) \varphi_v(\vbf)&=e \left ( nQ(\vbf) \right ) \varphi_v(\vbf), \qquad n \in \Q_v, \\
    \omega_v\left (1, \begin{pmatrix}
        0 & -1 \\ 1 & 0
    \end{pmatrix} \right ) \varphi_v(\vbf)&= \int_{V_{\Q_v}} \varphi_v(\vbf') e_v \left (- Q(\vbf,\vbf') \right )d\vbf',
\end{align*}
where $e_\8(t)=e^{2i\pi t}$ if $v=\8$, and $e_p(t)=e^{-2i\pi \{t\}_p}$ if $v=p$. A second model $\omega' \colon \GL_N(V_{\Q_v}) \times \SL_2(\Q_v) \longrightarrow U(\Scal(V_{\Q_v}))$ for the Weil representation is given by
\begin{align}
    \omega_v'(g,h)\varphi_v(\vbf)\coloneqq \vert \det(g)\vert_v^{-1}\varphi_v(g^{-1}\vbf h)
\end{align}
where $h$ acts on the two columns of $\vbf=(v,w)$ from the right. The two representations are intertwined by partial Fourier transform in the second variable
\begin{align}
    \Scal(V_{\Q_v}) \longrightarrow \Scal(V_{\Q_v}), \qquad \varphi_v \longmapsto \widehat{\varphi_v}(v,w) \coloneqq \int_{V_{\Q_v}} \varphi_v(v,w') e(\langle w,w' \rangle)dw'.
\end{align}

These representations extend to representations of the space of finite Schwartz function $\Scal(V_{\A_f})$. We will denote by $\omega_f,\omega'_f$ the Weil representations
\begin{align}
\omega_f ,\omega'_f \colon \GL_N(V_{\A_f}) \times \SL_2(\A_f) \longrightarrow U(\Scal(V_{\A_f})).
\end{align} Similarly, the partial Fourier transforms extends to a map $\phi \in \Scal(V_{\A_f}) \longmapsto \vphat \in \Scal(V_{\A_f})$.  
%
By Poisson summation, for $\varphi \in \Scal(V_\R)$ and $\phi \in \Scal(V_{\A_f})$, we have
\begin{align}
\sum_{\vbf \in V} (\omega_\8(g,h)\varphi)(\vbf)\phi(\vbf)=\sum_{\vbf \in V} (\omega_\8'(g,h)\widehat{\varphi})(\vbf)\vphat(\vbf).
\end{align}
\begin{prop}\label{prop fourier transform}\cite[Proposition.~4.9]{rbrsln} For a vector $v \in \C^N$ and an $N$-tuple $\uud$ let us write $v^{\uud} \coloneqq \langle v,e_1\rangle^{d_1} \cdots\langle v,e_N\rangle^{d_N},$ where $e_l \in \R^N$ is the $l$-th basis vector.
The partial Fourier transforms of $\varphi(z,\vbf)$ and $\alpha(z,\vbf)$ are
 \begin{align} \label{partial fourier vphat}
\widehat{\varphi}(z,\vbf)=\frac{i^N}{\det(g)} e^{-\pi  \lVert g^{-1}v\rVert^2-\pi \lVert g^{-1} w \rVert^2} \sum_{\vert \uud \vert=N}  \overline{g^{-1}(vi+w)}^{\uud}  \lambda(\uud)
\end{align}
 and
 \begin{align}
\alphahat(z,\vbf) =-\frac{i^N}{\det(g)}e^{-\pi  \lVert g^{-1}v\rVert^2-\pi \lVert g^{-1} w \rVert^2} \sum_{l=1}^N (-1)^{l-1}\sum_{\vert \uud \vert=N-1} \left [ \frac{2\pi \vert \langle g^{-1}(vi+w),e_l\rangle \vert ^2+d_l}{2\pi\langle g^{-1}(\overline{vi+w}),e_l\rangle}\right ] \overline{g^{-1}(vi+w)}^{\uud}  \lambda_{I_l}(\uud),
\end{align}
where $z=gg^t$.
\end{prop}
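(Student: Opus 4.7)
The plan is to perform a change of variables that separates the Fourier integral into a product over the $N$ coordinates and then evaluate the resulting one-dimensional Gaussian--Hermite integrals by the generating-function method.

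First, I would substitute $u = g^t w'$ in the defining integral $\widehat{\varphi}(z,(v,w)) = \int \varphi(z,(v,w'))\, e(\langle w, w'\rangle)\,dw'$, which introduces a Jacobian factor $\det(g)^{-1}$ and converts the pairing into $\langle g^{-1}w, u\rangle$. Writing $a = g^{-1}v$ and $b = g^{-1}w$, the Gaussian becomes $e^{-\pi\lVert a\rVert^2 - \pi\lVert u\rVert^2}$, independent of $g$. Since $H_{\uud}$ factors across the $N$ coordinates and the $\lambda(\uud)$'s do not depend on $\vbf$, the integral reduces to a product of one-variable integrals
\begin{align}
I_d(a_m, b_m) \coloneqq \int_{\R} H_d\bigl(\sqrt{\pi}(a_m+u_m)\bigr)\, e^{-\pi u_m^2}\, e^{2\pi i b_m u_m}\,du_m.
\end{align}

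To compute $I_d$, I would multiply the generating function $\sum_d H_d(y) t^d/d! = e^{2yt - t^2}$ against $e^{-\pi u^2 + 2\pi i b u}$, interchange sum and integral (justified by the rapid decay of the Gaussian), and complete the square in $u$. The resulting Gaussian integrates to a constant, leaving
\begin{align}
\sum_{d\ge 0}\frac{t^d}{d!}\, I_d(a,b) = e^{-\pi b^2}\, e^{2\sqrt{\pi}(a+ib)t}.
\end{align}
Comparing Taylor coefficients yields $I_d(a,b) = (2\sqrt{\pi}(a+ib))^d\, e^{-\pi b^2}$. Assembling the product over $m$, the combined factor $(2\sqrt{\pi})^N$ cancels the normalization $1/(2^N\pi^{N/2})$ in $\varphi$. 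The identities $b_m - i a_m = -i(a_m + ib_m)$ together with $i^N(-i)^N = 1$ then convert $\prod_m(a_m+ib_m)^{d_m}$ into $i^N\,\overline{g^{-1}(vi+w)}^{\uud}$, reproducing the stated formula for $\widehat{\varphi}$.

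For $\widehat{\alpha}$, the same change of variables leaves an additional linear factor $a_l - u_l$ in the $l$-th coordinate of the integrand, so the one-variable integral at index $l$ becomes $a_l I_{d_l}(a_l, b_l) - J_{d_l}(a_l, b_l)$, where
\begin{align}
J_d(a,b) \coloneqq \int_{\R} u\, H_d\bigl(\sqrt{\pi}(a+u)\bigr)\, e^{-\pi u^2}\, e^{2\pi i b u}\,du = \frac{1}{2\pi i}\frac{\partial I_d}{\partial b}(a,b).
\end{align}
Differentiating the closed form of $I_d$ in $b$, factoring $(a_l+ib_l)^{d_l-1}$ out of the result, and using the identity $(a_l - ib_l)(a_l + ib_l) = a_l^2 + b_l^2 = \lvert \langle g^{-1}(vi+w), e_l\rangle \rvert^2$, the combination $a_l I_{d_l} - J_{d_l}$ acquires a scalar factor proportional to $2\pi(a_l^2 + b_l^2) + d_l$. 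Repeating the powers-of-$i$ manipulation --- now with $(-i)^{N-1}$ arising from $\prod_m (b_m - ia_m)^{d_m}$ and an additional $i/(a_l + ib_l)$ from rewriting $1/(b_l - ia_l)$ --- rearranges the product into the bracketed expression of the statement. The main technical obstacle is the careful bookkeeping of signs and powers of $i$ in passing between the natural variables $a_m + ib_m$ that arise from the one-dimensional integration and the conjugate variables $\overline{g^{-1}(vi+w)}$ used in the final formula; this is routine but error-prone.
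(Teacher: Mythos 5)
The paper offers no proof of this proposition --- it is quoted from the companion paper \cite[Proposition~4.9]{rbrsln} --- so I can only judge your argument on its own terms. Your method (substituting $u=g^tw'$ to pick up the Jacobian $\det(g)^{-1}$, factoring into one-dimensional Gaussian--Hermite integrals, and evaluating these via the generating function $e^{2yt-t^2}$) is the natural and surely the intended one, and your treatment of $\widehat{\varphi}$ is correct: $I_d(a,b)=(2\sqrt{\pi}(a+ib))^d e^{-\pi b^2}$, the factor $(2\sqrt{\pi})^N$ cancels $2^{-N}\pi^{-N/2}$, and $a_m+ib_m=i(b_m-ia_m)$ produces exactly the prefactor $i^N$ of \eqref{partial fourier vphat}.

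For $\widehat{\alpha}$, however, the step you describe does not yield what you assert. Differentiating $I_d$ in $b$ gives $J_d=(2\sqrt{\pi})^d e^{-\pi b^2}\bigl[\tfrac{d}{2\pi}(a+ib)^{d-1}+ib(a+ib)^d\bigr]$, hence
\begin{align}
a\,I_d - J_d = (2\sqrt{\pi})^d e^{-\pi b^2}(a+ib)^{d-1}\Bigl(a^2+b^2-\tfrac{d}{2\pi}\Bigr),
\end{align}
i.e.\ the numerator is $2\pi(a^2+b^2)-d$, with a \emph{minus} sign, not the $+d_l$ you claim and the Proposition states. (Check $d=1$ by hand: $aI_1-J_1=2\sqrt{\pi}\,(a^2+b^2-\tfrac{1}{2\pi})e^{-\pi b^2}$.) Moreover, since $\vert\uud\vert=N-1$ here, the accumulated factor is only $(2\sqrt{\pi})^{N-1}$, which against the normalization $2^{-N}\pi^{-(N-1)/2}$ of $\alpha^0$ in \eqref{formalpha} leaves a residual $\tfrac12$; together with $i^{N-2}=-i^N$ the overall constant comes out as $+\tfrac{i^N}{2\det(g)}$ rather than $-\tfrac{i^N}{\det(g)}$. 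So either your final assembly contains sign errors, or the displayed formula \eqref{formalpha} for $\alpha^0$ and the stated Fourier transform are mutually inconsistent as printed (the author already flags sign errors in the cited source, and the $N=2$ example inherits the $+\tfrac{1}{2\pi}$ convention of the Proposition). Either way, the ``routine but error-prone bookkeeping'' you defer is precisely where the discrepancy sits: the proof is not complete until that computation is written out and reconciled with the normalization of $\alpha^0$.
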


\subsection{Example when $N=2$} \label{example lambda} We have $X \simeq \HH \times \R_{>0}$. Let $z_1=x_1+iy_1$ be the coordinate on $S \simeq \HH$. Using the formula in Proposition \ref{prop fourier transform} we find
 \begin{align}
\alphahat(z,\vbf) & =u^{-2}\exp \left (-u^{-2}\pi \lVert g_1^{-1}(vi+w) \rVert^2 \right ) \left [ \left ( \vert \langle g^{-1}(vi+w),e_1\rangle \vert ^2+\frac{1}{2\pi}\right ) \lambda_{I_1}(1,0) \right .  \\
& + \langle g^{-1}(vi+w),e_1\rangle \langle \overline{g^{-1}(vi+w)},e_2\rangle  \lambda_{I_1}(0,1) -  \langle \overline{g^{-1}(vi+w)},e_1\rangle \langle g^{-1}(vi+w),e_2\rangle   \lambda_{I_2}(1,0) \\
& \hspace{5cm}-\left . \left ( \vert \langle g^{-1}(vi+w),e_2\rangle \vert ^2+\frac{1}{2\pi}\right ) \lambda_{I_2}(0,1) \right ].
\end{align}
The form $\lambda$ is
\begin{align}
\lambda = \frac{1}{2} \left (g_1^{-1}dg_1+(g_1^{-1}dg_1)^t \right )+\frac{du}{u}=\begin{pmatrix}
\frac{du}{u} + \frac{dy_1}{2y_1}& \frac{dx_1}{2y_1} \\ \frac{dx_1}{2y_1} &\frac{du}{u} - \frac{dy_1}{2y_1} \end{pmatrix} \in \Omega^2(X) \otimes \End(\R^2)
\end{align}
where
\begin{align}
g_1=\begin{pmatrix}
\sqrt{y_1} & x_1/\sqrt{y_1} \\ 0 & 1/\sqrt{y_1}
\end{pmatrix}.
\end{align} For $I_1=\{2\}$ and $I_2=\{1\}$ we have
\begin{align}
\lambda_{I_1}(1,0) &=\lambda_{21}= \frac{dx_1}{2y_1}, \qquad \lambda_{I_2}(1,0)=\lambda_{11}=\frac{du}{u} + \frac{dy_1}{2y_1} ,\\
\lambda_{I_1}(0,1)&=\lambda_{22}=\frac{du}{u} - \frac{dy_1}{2y_1}, \qquad \lambda_{I_2}(0,1)=\lambda_{12}=\frac{dx_1}{2y_1}.
\end{align}
Hence, we can rewrite $\alphahat(z,\vbf)$ as
\begin{align}
\alphahat(z,\vbf)=\alphahat_{x_1}(g,\vbf)\frac{dx_1}{2y_1} +\alphahat_{y_1}(g,\vbf)\frac{dy_1}{2y_1}+\alphahat_{u}(g,\vbf)\frac{du}{2u},
\end{align}
where
\begin{align} \label{formulaalphaxyu}
\alphahat_{x_1}(g,\vbf) &\coloneqq u^{-4}e^{-u^{-2}\pi \lVert g_1^{-1}(vi+w) \rVert^2} \left ( \vert \langle g_1^{-1}(vi+w),e_1\rangle \vert^2-\vert \langle g_1^{-1}(vi+w),e_2\rangle\vert^2 \right ), \nonumber \\
\alphahat_{y_1}(g,\vbf)&\coloneqq-2u^{-4}e^{-u^{-2}\pi \lVert g_1^{-1}(vi+w) \rVert^2} \re \left ( \langle g_1^{-1}(\overline{vi+w}),e_1\rangle \langle g_1^{-1}(vi+w),e_2\rangle\right ), \\
\alphahat_{u}(g,\vbf)&\coloneqq 2iu^{-4}e^{-u^{-2}\pi \lVert g_1^{-1}(vi+w) \rVert^2} \im \left ( \langle g_1^{-1}(\overline{vi+w}),e_1\rangle \langle g_1^{-1}(vi+w),e_2\rangle\right ).
\end{align}

\subsection{Theta series}  From Proposition \ref{prop fourier transform} it follows that
\begin{align} 
\omega_\8'(1,k)\widehat{\varphi}(z,\vbf) & =j(k,i)^{-N}\widehat{\varphi}(z,\vbf), \\
\omega_\8'(1,k)\alphahat(z,\vbf)&=j(k,i)^{-(N-2)}\alphahat(z,\vbf)
\end{align}
for $k \in \SO(2)$, where $j(g,\tau)=c\tau+d$ for $g=\begin{pmatrix} a & b \\ c & d \end{pmatrix}$. Since the partial Fourier transform intertwines $\omega_\8$ and $\omega_\8'$, we also have
\begin{align} \label{indep}
\omega_\8(1,k)\varphi(z,\vbf)&=j(k,i)^{-N}\varphi(z,\vbf) \\
\omega_\8(1,k)\alpha(z,\vbf)&=j(k,i)^{-(N-2)}\alpha(z,\vbf).
\end{align}

 For $\tau \in \HH$ and $\phi \in \Scal(V_{\A_f})$ we set
\begin{align}
\Theta_{\varphi}(z,\tau,\phi) & \coloneqq j(h_\tau,i)^{N} \sum_{\vbf \in V} \omega_\8(1,h_\tau)\varphi(z,\vbf)\phi(\vbf) \in \Omega^N(X) \otimes C^\8(\HH),
\end{align}
where $h_\tau \in \SL_2(\R)$ is any matrix sending $i$ to $\tau$. By \eqref{indep}, the right-hand side only depends on $\tau$ and not on the choice of $h_\tau$. Taking the standard matrix
$h_\tau=\begin{pmatrix}
\sqrt{y} & x/\sqrt{y} \\ 0 & 1/\sqrt{y}
\end{pmatrix}$ we get
\begin{align}
\Theta_{\varphi}(z,\tau,\phi) & = \sum_{\vbf \in V} \varphi^0(z,\sqrt{y}\vbf) \phi(\vbf) e(Q(\vbf)\tau).
\end{align}
The differential form $\varphi$ satisfies the equivariance $g^\ast \varphi(z,\vbf)=\varphi(z,\rho_{g^{-1}}\vbf)=\omega_\8(g,1)\varphi(z,\vbf)$. Moreover, the invariance in \eqref{invariance of phi} can be written $\omega_f(\gamma,1)\phi=\phi$. Thus, if $\Gamma \subset \SL_N(\Q)$ is a torsion-free subgroup preserving $\phi$ as in \ref{section special cycles}, then
\begin{align}
\gamma^\ast \Theta_{\varphi}(z, \tau,\phi)=\Theta_{\varphi}(z,\tau,\omega_f(\gamma^{-1},1) \phi)=\Theta_{\varphi}(z,\tau, \phi) \in \Omega^{N}(X)^{\Gamma} \otimes C^\8(\HH)
\end{align}
defines a $\Gamma$-invariant form on $X$. On the other hand, for $\gamma \in \SL_2(\Q)$ it satisfies
\begin{align} \label{slash Epsi}
\Theta_{\varphi}(z,\gamma \tau,\phi)=j(\gamma,\tau)^N \Theta_{\varphi}(z,\tau,\omega_f(1,\gamma^{-1}) \phi).
\end{align}
Thus, if $\Gamma' \subseteq \SL_2(\Z)$ is a congruence subgroup that preserves $\phi$ under $\omega_f$, then if transforms like a modular form of weight $N$ for $\Gamma' \subseteq \SL_2(\Z)$.

Similarly, we define the theta series 
\begin{align} \label{thetalpha}
\Theta_{\alpha}(z,\tau,\phi)&=j(h_\tau,i)^{(N-2)} \sum_{\vbf \in V} \omega_\8(1,h_\tau)\alpha(z,\vbf)\phi(\vbf) \in \Omega^{N-1}(X)^{\Gamma} \otimes C^\8(\HH) \\
& =y\sum_{\vbf \in V} \alpha^0(z,\sqrt{y}\vbf) \phi(\vbf)e(Q(\vbf)\tau)
\end{align}
which transforms like a modular form of weight $N-2$ and level $\Gamma_0(p)$ . Moreover, we have
\begin{align} 
d_X\alpha^0(z,\sqrt{y}\vbf)=y^2 \frac{\partial}{\partial y}\varphi^0(z,\sqrt{y}\vbf)=L_N \varphi^0(z,\sqrt{y}\vbf)
\end{align}
where $L_N=-2iy^2\dbar,$ is the lowering operator and $d_X$ is the exterior derivative on $X$. Thus, we have \cite[Theorem.~4.19]{rbrsln}
\begin{align} \label{transgressionformula}
d_X\Theta_{\alpha}(z,\tau,\phi)=L_N \Theta_{\varphi}(z,\tau,\phi).
\end{align}

\subsection{Push-forward} A matrix $g \in G$ can be written $g=g_1u$ where $g_1 \in G^1$ and $u=\det(g)^{\frac{1}{N}}$. This induces an isomorphism
\begin{align}
X \simeq S \times \R_{>0}, \quad z \longmapsto (z_1,u)
\end{align} and we can view $X$ as a bundle $\pi \colon X \longrightarrow S$ of rank $1$ over $S$. A form $\eta \in \Omega^k(X)$ can be written $\eta=\eta_0+\eta_1\frac{du}{u}$ where $\eta_0$ is a $k$-form on $X$ of degree $0$ in $u$, and $\eta_1=(-1)^{N-1}\iotau \eta$ is a $(k-1)$-form of degree $0$ in $u$. The pushforward along $\pi$ is defined by
\begin{align}
\pi_\ast(\eta) \coloneqq \int_0^\8 \eta_1\frac{du}{u} \in \Omega^{k-1}(S),
\end{align}
provided that the integral converges. We define the forms
\begin{align} \label{define Ephi}
E_\varphi(z_1,\tau,\phi,s) \coloneqq \pi_\ast \left ( \Theta_{\varphi}(z,\tau,\phi)u^{-2Ns} \right )= (-1)^{N-1}\int_0^\8 \iotau \Theta_{\varphi}(z,\tau,\phi)u^{-2Ns} \frac{du}{u},
\end{align}
and  $E_\alpha(z_1,\tau,\phi,s)$ similarly. By \cite[Theorem.~4.19]{rbrsln}, we have 
\begin{align} \label{transgressionformula2}
d_SE_{\alpha}(z_1,\tau,\phi)=L_N E_{\varphi}(z_1,\tau,\phi),
\end{align}
where $d_S$ is the exterior derivative on $S$. Instead of the $(N-1)$-form $\alpha(z,\vbf)$ we will rather use the $N$-form
\begin{align} \label{definition of psi}
\psi(z,\vbf) \coloneqq \alpha(z,\vbf)\frac{du}{u}.
\end{align}
\begin{rmk} The construction of Bruinier-Funke \cite{BF04} would suggest to use the $(N-1)$-form $\alpha$, whose pushforward is an $(N-2)$-form. The motivation to use the form $\psi$ is its relation to the derivative of $E_{\varphi}(z_1,\tau,\phi,s)$.  On the other hand, if we had used the form $\alpha$, then the exterior derivative of the corresponding regularized lift would be related to $E_{\varphi}(z_1,\tau,\phi)$ by a current equation, as in \cite[Theorem.~1.5]{BF04}.  In particular, it should define a differential character in the sense of Cheeger-Simons. We do not have such an interpretation for the corresponding regularized lift $\Phi(z_1,f,\phi)$ obtained from $\psi$ in \eqref{definition of regularized lift}. In future work, we hope to modify the construction presented in this paper to combine these two point of views.
\end{rmk}

Let $\Theta_{\psi}(z,\tau,\phi)$ be the theta series defined as in \eqref{thetalpha}
\begin{align}
\Theta_{\psi}(z,\tau,\phi) \coloneqq \Theta_{\alpha}(z,\tau,\phi)\frac{du}{u}=y\sum_{\vbf \in V} \alpha^0(z,\sqrt{y}\vbf) \phi(\vbf) e(Q(\vbf)\tau)\frac{du}{u} \in \Omega^N(X)^\Gamma \otimes C^\8(\HH).
\end{align}
We will be interested in its pushforward
\begin{align}
E_\psi(z_1,\tau,\phi,s) \coloneqq \pi_\ast \left ( \Theta_{\psi}(z,\tau,\phi)u^{-2Ns} \right ) \in \Omega^{N-1}(S)^\Gamma \otimes C^\8(\HH).
\end{align}
\begin{prop}\label{rapid decrease} The theta series $\Theta_{\varphi}(z,\tau,\phi)$, $\Theta_{\alpha}(z,\tau,\phi)$ and $\Theta_{\psi}(z,\tau,\phi)$ are $O(e^{-Cu})$ as $u \rightarrow \8$ and $O(e^{-C/u})$ as $u \rightarrow 0$ (uniformly for $(z,\tau)$ in a compact set). Hence, the integrals $E_{\varphi}(z_1,\tau,\phi,s)$ and $E_{\psi}(z_1,\tau,\phi,s)$ converge uniformly on compact sets and define smooth forms on $S$.
\end{prop}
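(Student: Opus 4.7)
The plan is to estimate the theta series term-by-term using the Gaussian decay of the Mathai--Quillen forms, together with Poisson summation to handle the singular strata. Writing $z = u^{2}z_{1}$ and $g = g_{1}u$ with $g_{1} \in G^{1}$, the explicit formulas \eqref{expression varphi} and \eqref{formalpha} show that both $\varphi(z, \sqrt{y}\vbf)$ and $\alpha(z, \sqrt{y}\vbf)$ carry the Gaussian factor $\exp(-\pi y(u^{-2}\|g_{1}^{-1}v\|^{2} + u^{2}\|g_{1}^{t}w\|^{2}))$ multiplied by a polynomial in $u^{\pm 1}, v, w$ of total degree $N$. For $(z_{1}, \tau)$ ranging in a compact set, $y$ and $\|g_{1}^{\pm 1}\|$ are uniformly bounded, and $\vbf = (v,w)$ ranges over cosets of a fixed lattice $L \subset V$ determined by $\supp(\phi)$.

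For the decay as $u \to \infty$, I split the sum according to whether $w \neq 0$ or $w = 0$. The $w \neq 0$ part is controlled by $\exp(-\pi y u^{2}\|g_{1}^{t}w\|^{2}) \leq e^{-Cu^{2}}$, giving super-exponential decay. For $w = 0$, the condition $Q(v, 0) = 0$ trivialises the $\tau$-factor and the contribution reduces to a sublattice sum $\sum_{v \in v_{0} + L_{1}} \varphi^{0}(z, \sqrt{y}(v, 0))$ over cosets of a sublattice $L_{1} \subset \Q^{N}$. I Poisson-dualise in $v$: the resulting dual sum carries Gaussian $\exp(-\pi u^{2} y^{-1}\|g_{1}^{t}v^{*}\|^{2})$, rapidly decaying in $u$ for $v^{*} \neq 0$; the zero Fourier mode vanishes because, up to constants, it equals $u^{N}\sum_{|\uud| = N}\lambda(\uud)\prod_{m=1}^{N}\int_{\R} H_{d_{m}}(\sqrt{\pi}t) e^{-\pi t^{2}} dt$, and by Hermite orthogonality each such integral is zero for $d_{m} \geq 1$ while $|\uud| = N \geq 1$ forces some $d_{m} \geq 1$. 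The leftover $\vbf = 0$ term $\phi(0)\varphi(z, 0)$ is a $u$-independent form, and a direct check from \eqref{expression varphi} at $\vbf = 0$ shows $\iotau\varphi(z, 0) = 0$; hence it does not obstruct the pushforward. The $u \to 0$ analysis is the mirror image, with the roles of $v, w$ and of $u, u^{-1}$ exchanged.

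The same estimates apply to $\Theta_{\alpha}$ via \eqref{formalpha} and Proposition \ref{prop fourier transform}, and $\Theta_{\psi} = \Theta_{\alpha} \wedge \frac{du}{u}$ inherits the decay. Combining both ends yields $|\iotau\Theta_{\bullet}(z, \tau, \phi)| = O(e^{-Cu})$ as $u \to \infty$ and $O(e^{-C/u})$ as $u \to 0$, uniformly on compact subsets of $S \times \HH$. The integral \eqref{define Ephi} defining $E_{\varphi}(z_{1}, \tau, \phi, s)$, and the analogous integral for $E_{\psi}$, then converge absolutely and uniformly on compact subsets of $S \times \HH \times \C$ for all $s$, and smoothness on $S$ follows by standard differentiation under the integral sign justified by these exponential bounds. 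The main obstacle is the Poisson step: a naive estimate of the $(v, 0)$-contribution grows like $u^{N}$ as $u \to \infty$ by Jacobi's theta transformation, and rapid decay comes only from the Hermite-orthogonality vanishing structurally built into the Mathai--Quillen form, together with the companion identity $\iotau\varphi(z, 0) = 0$ — both specific features of $\varphi$ and $\alpha$ beyond mere Gaussian bounds.
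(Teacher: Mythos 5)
Your overall strategy is sound and is essentially the same technique as the paper's, just organized differently: the paper applies the partial Fourier transform and a full $2N$-dimensional Poisson summation, reads the decay off the explicit formulas for $\widehat{\varphi}$ and $\alphahat$ in Proposition \ref{prop fourier transform} (the dual Gaussian carries $u^{-1}$ in front of \emph{both} variables), and notes that the only non-decaying dual term, $\vbf=(0,0)$, is killed because the polynomial factor vanishes there; you instead keep the original forms for the generic vectors and Poisson-dualise only the degenerate strata $w=0$ (resp.\ $v=0$), killing the dual zero mode by Hermite orthogonality. For $\Theta_\varphi$ your version is correct and complete.

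There is, however, a genuine gap in the sentence ``the same estimates apply to $\Theta_\alpha$.'' The polynomial in \eqref{formalpha} is not $H_{\uud}$ with $\vert\uud\vert=N$ but $\langle g^{-1}v-g^tw,e_l\rangle\, H_{\uud}(\sqrt{\pi}(g^{-1}v+g^tw))$ with $\vert\uud\vert=N-1$, and the extra linear factor breaks your orthogonality argument when $N=2$: on the stratum $w=0$ the zero Fourier mode of the $(l,\uud)$-term involves $\int_\R t\,H_{d_l}(\sqrt{\pi}t)e^{-\pi t^2}\,dt$, which is \emph{nonzero} precisely when $d_l=1$, so for $N=2$ the terms with $\uud=e_l$ survive individually. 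They only cancel in the aggregate because $\lambda_{12}=\lambda_{21}$ (equivalently, the $\frac{1}{2\pi}$ contributions from $\lambda_{I_1}(1,0)$ and $\lambda_{I_2}(0,1)$ cancel, which is exactly the computation the paper carries out in \eqref{formulaalphaxyu}). For $N>2$ one has $\vert\uud\vert=N-1\geq 2$, so some $d_m\geq 1$ with $m\neq l$ forces the product of integrals to vanish and your argument goes through; the case $N=2$ needs the extra cancellation and must be addressed. Separately, your aside that $\iotau\varphi(z,0)=0$ is both unproved (for even $N$ the Euler form $\varphi(z,0)$ is not zero, and its $du/u$-component requires an argument, e.g.\ that the bundle and connection are pulled back from $S$) and unnecessary: if you apply Poisson summation to the full coset sum including $\vbf=0$, the vanishing of the dual zero mode already gives $O(e^{-Cu})$ for $\Theta_\varphi$ itself, which is what the proposition asserts; retreating to a bound on $\iotau\Theta_\varphi$ only would prove a weaker statement than the one claimed (though it would still suffice for the convergence of the pushforwards).
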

\begin{proof} It follows from Proposition 4.4 and 4.9 in \cite{rbrsln}. 
By Poisson summation, we have
\begin{align}
\Theta_{\varphi}(z,\tau,\phi) =\sqrt{y}^{-N} \sum_{\vbf \in V} \omega_\8(1,h_\tau)\varphi(z,\vbf) \phi(\vbf) =\sqrt{y}^{-N} \sum_{\vbf \in V} \omega_\8'(1,h_\tau)\widehat{\varphi}(z,\vbf) \vphat(\vbf),
\end{align}
where from Lemma \ref{prop fourier transform} we have
 \begin{align}
\omega_\8'(1,h_\tau)\widehat{\varphi}(z,\vbf)=y^{-N}u^{-2N}i^N e^{-\pi  \lVert u^{-1}g_1^{-1}(v\tau+w) \rVert^2/y} \sum_{\vert \uud \vert=N}  g_1^{-1}(\overline{v\tau+w})^{\uud}  \lambda(\uud).
\end{align}
The exponential function guarantees the rapid decay, except possibly at $(v,w)=(0,0)$. But since the function $\overline{g_1^{-1}(v\tau+w)}^{\uud}$ vanishes at $(0,0)$, we have
\begin{align}
\Theta_{\varphi}(z,\tau,\phi)=O(e^{-C/u}) \ \ \textrm{as} \ \ u \rightarrow 0,
\end{align}
for all $N \geq 1$. Similarly, using the Poisson summation in the first variable shows that
\begin{align}
\Theta_{\varphi}(z,\tau,\phi)=O(e^{-Cu}) \ \ \textrm{as} \ \ u \rightarrow \8.
\end{align}
For the theta series $\Theta_{\alpha}(z,\tau,\phi)$ the idea is the same. The polynomial term in front of, say, $\lambda_{I_l}(\uud)$ is
\begin{align} \label{expression fourier}
y^{-(N-1)}\sum_{\substack{\uud=(d_1, \dots,d_m) \\ d_1+\cdots +d_N=N-1}}\left [ \frac{2\pi \vert \langle g^{-1}(v\tau+w),e_l\rangle \vert ^2+d_ly}{2\pi\langle g^{-1}(\overline{v\tau+w}),e_l\rangle}\right ] \prod_{m=1}^N\langle g^{-1}(\overline{v\tau+w}),e_m\rangle^{d_m}.
\end{align}
For $N>2$, the degree of the product on the right is $N-1\geq 2$. Hence the term vanishes at $(v,w)=(0,0)$.

Now let $N=2$. When $l=1$ and $\uud=(d_1,d_2)=(1,0)$, then the term in front of $\lambda_{I_1}(1,0)$ is
\begin{align}
\frac{\vert \langle g^{-1}(v\tau+w),e_1\rangle \vert ^2}{y}+\frac{1}{2\pi},
\end{align}
which is equal to $1/2\pi$ at $(v,w)=(0,0)$. However, the term $1/2\pi$ is cancelled out by the term $1/2\pi$ from $\lambda_{I_2}(0,1)$; as can be seen in the formulas given in \eqref{formulaalphaxyu}. Thus, when $N=2$ we also have $\alphahat(z,0)=0$.
\end{proof}

\subsection{Regular and singular vectors} A vector $\vbf=(v,w) \in V$ is {\em regular} if $v$ and $w$ are both non-zero, and {\em singular} otherwise. Let $V_{\reg}$ and $V_{\sing}$ be the set of regular (resp. singular) vectors in $V$, so that 
\begin{align}
V=V_{\sing} \sqcup V_{\reg}
\end{align}
and
\begin{align}
\Theta_{\varphi}(z,\tau,\phi)=\sum_{\vbf \in V_{\sing}} \varphi^0(z,\sqrt{y}\vbf) \phi(\vbf) e(Q(\vbf)\tau)+\sum_{\vbf \in V_{\reg}} \varphi^0(z,\sqrt{y}\vbf) \phi(\vbf) e(Q(\vbf)\tau).
\end{align}The sum over the regular vectors is absolutely convergent; see \cite[Lemma.~4.7]{rbrsln}. Hence, we can bring the integral inside the sum and it follows from \eqref{define Ephi} that
\begin{align} \label{fourier Ephi}
E_\varphi(z_1,\tau,\phi,s)& =y^{Ns}e_{\varphi,1}(z_1,\phi,s)+y^{-Ns}e_{\varphi,2}(z_1,\phi,s)  +\sum_{\vbf \in V_\reg} b_\varphi(z_1,\sqrt{y}\vbf,s) \phi(\vbf) e(Q(\vbf)\tau) \qquad \qquad
\end{align}
where
\begin{align}
b_\varphi(z_1,\vbf,s) &\coloneqq (-1)^{N-1}\int_0^\8 \iotau \varphi^0(z,\vbf) u^s \frac{du}{u}, \\
e_{\varphi,1}(z_1,\phi,s) &\coloneqq (-1)^{N-1}  \int_0^\8  \sum_{w \in \Q^N}\iotau \varphi^0(z,(0,w)) \phi(0,w) u^{-2Ns} \frac{du}{u},\\
e_{\varphi,2}(z_1,\phi,s) &\coloneqq (-1)^{N-1}  \int_0^\8 \sum_{v \in \Q^N}\iotau \varphi^0(z,(v,0)) \phi(v,0) u^{-2Ns}\frac{du}{u}.
\end{align}
For the sum over the singular vectors we can bring the integral inside if $\re(s)$ is large enough.

Similarly, we find for $E_\psi$ (we will only need the value at $s=0$)
\begin{align} \label{fourier Epsi}
E_\psi(z_1,\tau,\phi)=ye_\psi(z_1,\phi)+\sum_{\vbf \in V_\reg} y b_\psi(z_1,\sqrt{y}\vbf) \phi(\vbf) e(Q(\vbf)\tau)
\end{align}
where
\begin{align}
b_\psi(z_1,\vbf) \coloneqq \int_0^\8 \alpha^0(z,\vbf) \frac{du}{u}
\end{align}
and $e_\psi(z_1,\phi)=e_{\psi,1}(z_1,\phi)+e_{\psi,2}(z_1,\phi)$ with
\begin{align}
e_{\psi,1}(z_1,\phi) \coloneqq \int_0^\8 \sum_{v \in \Q^N}\alpha^0(z,(v,0))\phi(v,0) \frac{du}{u}, \\
e_{\psi,2}(z_1,\phi) \coloneqq \int_0^\8 \sum_{w \in \Q^N}\alpha^0(z,(0,w))\phi(0,w) \frac{du}{u}.
\end{align}

We will also need Fourier expansions at other cusps.  Let $\Gamma' \backslash \PP^1(\Q)$ be the set of cusps of the congruence subgroup $\Gamma' \subseteq \SL_2(\Z)$. For each $r \in \Gamma' \backslash \PP^1(\Q)$, there is a matrix $\gamma_r \in \SL_2(\Z)$ such that $\gamma_r \8=r$. If $f$ is a smooth function that transforms like a modular form for $\Gamma'$, then $f \vert_{\gamma_r}$ transforms like a modular form for $\gamma_r^{-1}\Gamma' \gamma_r$. It has a Fourier expansion
\begin{align} \label{width denominator}
f \vert_{\gamma_r}=\sum_{n \in \frac{1}{w_r}\Z} a_r\left (n,y \right )e \left ( n \tau\right ),
\end{align}
where $w_r$ is the width of the cusp $r$. We will call it a  {\em Fourier expansion of $f$ at the cusp $r$}.
\begin{rmk} If the width is greater than $1$, the Fourier expansion is not unique. If we replace $\gamma_r$ by $\gamma_r \begin{psmallmatrix} 1 & l \\ 0 & 1 \end{psmallmatrix}$, then the Fourier coefficients would be $a_r(n,y)e \left ( \frac{n l}{w_r}\right )$. However, in the computations of the regularized product, the ambiguity will be cancelled.
\end{rmk}
By \eqref{slash Epsi}, we deduce that $E_\psi(z_1,\tau,\phi) \vert_{\gamma_r}=E_\psi(z_1,\tau,\omega(1,\gamma_r^{-1})\phi)$. Hence, after setting $\phi_r \coloneqq \omega_f(1,\gamma_r^{-1})\phi$, we find that a Fourier expansion at the cusp $r$ is
\begin{align} \label{fourier Epsi p}
E_\psi(z_1,\tau,\phi) \vert_{\gamma_r} =ye_\psi(z_1,\phi_r)+\sum_{\vbf \in V_\reg} y b_\psi(z_1,\sqrt{y}\vbf) \phi_r(\vbf) e(Q(\vbf)\tau).
\end{align}

\subsection{Functional equation} \label{section func}

Let $\Mtil_{N}(\Gamma')$ be the space of smooth functions on $\HH$ that transform like a modular form of weight $N$ for $\Gamma'$ and have moderate growth at all cusps. The Eisenstein class $E_\varphi$ defines a family of lift
\begin{align}
Z_{N-1}(S_\Gamma;\Z) & \longrightarrow \Mtil_{N}(\Gamma'), \\
 c & \longmapsto E_\varphi(c,\tau,\phi,s) \coloneqq \int_c E_\varphi(z_1,\tau,\phi,s)
\end{align}
from cycles to non-holomorphic modular forms of weight $N$. The form $E_\varphi(c,\tau,\phi,s)$ is closed and holomorphic at $s=0$, so that it specializes to a holomorphic lift at the level of cohomology
\begin{align} \label{theta lift at zero}
H_{N-1}(S_\Gamma;\Z) & \longrightarrow M_{N}(\Gamma'),\\
 [c] & \longmapsto E_\varphi(c,\tau,\phi) \coloneqq \int_c E_\varphi(z_1,\tau,\phi).
\end{align}

The lift vanishes if the class $[c]$ is a torsion class. It can also vanish because of a functional equation. We have an involution $g \longmapsto g^\ast \coloneqq g^{-t}$ on $\GL_N(\R)^+$ that induces an involution on $X$ and $S$. Let $\Gamma^\ast$ be the image of $\Gamma$. It induces an involution
\begin{align}
Z_{N-1}(S_\Gamma;\Z) \longrightarrow Z_{N-1}(S_{\Gamma^\ast};\Z), \quad c \longmapsto c^\ast.
\end{align}
Let $\phi^\ast \in \Scal(V_{\A_f})$ be the Schwartz function defined by $\phi^\ast(v,w) \coloneqq \phi(w,v)$. Since $(\rho(\gamma)\phi)^\ast=\rho(\gamma^\ast)\phi^\ast$, the Schwartz function $\phi^\ast$  is preserved by $\Gamma^\ast$.
\begin{thm} \label{functional equation lift}The theta lift satisfies the functional equation
\begin{align}
E_\varphi(c^\ast,\tau,\phi,s)=(-1)^NE_\varphi(c,\tau,\phi^\ast,-s).
\end{align}
\end{thm}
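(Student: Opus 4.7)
The plan is to track the involution $\iota\colon g \mapsto g^\ast = g^{-t}$ through each step of the construction of $E_\varphi$. On $X$ it sends $z = gg^t$ to $z^{-1}$, and on the factorization $X \simeq S \times \R_{>0}$ it acts by $(z_1, u) \mapsto (z_1^{-1}, u^{-1})$; write $\iota_S$ for its restriction to $S$. On $V$ it intertwines with the swap $\sigma(v,w) = (w,v)$, since $\rho(g^\ast)\sigma = \sigma\rho(g)$, so that $\phi^\ast = \sigma^\ast \phi$ is automatically preserved by $\Gamma^\ast$ whenever $\phi$ is preserved by $\Gamma$.

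The key computation is the pullback of $\lambda$. From $d(g^{-t}) = -g^{-t}(dg)^t g^{-t}$ one obtains $(g^{-t})^{-1}d(g^{-t}) = -(g^{-1}dg)^t$, so $\iota^\ast \vartheta = -\vartheta^t$ and therefore $\iota^\ast \lambda = -\lambda$. Since $\lambda(\uud)$ is an $N$-fold product of entries of $\lambda$, this yields $\iota^\ast \lambda(\uud) = (-1)^N \lambda(\uud)$. Plugging this into the explicit formula \eqref{formula varphi zero}, the substitutions $g^{-1} \leftrightarrow g^t$ combined with $(v,w) \leftrightarrow (w,v)$ leave both the exponential factor $e^{-\pi\lVert g^{-1}v - g^t w\rVert^2}$ and the Hermite polynomial $H_{\uud}(\sqrt{\pi}(g^{-1}v + g^t w))$ invariant, while only the form factor contributes $(-1)^N$, so
\begin{align}
\iota^\ast \varphi^0(z, \vbf) = (-1)^N \varphi^0(z, \sigma\vbf).
\end{align}
Using $Q(\sigma\vbf) = Q(\vbf)$ and re-indexing the theta sum by $\vbf \mapsto \sigma\vbf$, this gives $\iota^\ast \Theta_\varphi(z,\tau,\phi) = (-1)^N \Theta_\varphi(z,\tau,\phi^\ast)$.

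Finally, to bring in the pushforward $\pi\colon X \to S$, note that $\iota$ acts on each fiber $\R_{>0}$ by $u \mapsto 1/u$ with $\iota^\ast u^{-2Ns} = u^{2Ns}$. A direct check shows $\pi_\ast \iota^\ast = \iota_S^\ast \pi_\ast$ on forms: the sign from $\iota^\ast(du/u) = -du/u$ is cancelled by the sign from the orientation-reversing change of variables $u \to 1/u$ in the fiber integral. Combining everything,
\begin{align}
\iota_S^\ast E_\varphi(z_1,\tau,\phi,s) = \pi_\ast \iota^\ast\bigl(\Theta_\varphi(z,\tau,\phi)\,u^{-2Ns}\bigr) = (-1)^N E_\varphi(z_1,\tau,\phi^\ast,-s),
\end{align}
and integrating this identity over $c$ via $\int_{c^\ast}\omega = \int_c \iota_S^\ast\omega$ produces the functional equation. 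The main obstacle is the careful sign bookkeeping: two separate factors of $(-1)^N$ appear (one from $\lambda(\uud)$, the other hidden in the $s \mapsto -s$ symmetry), and the subtle cancellation of the two signs in $\pi_\ast \iota^\ast = \iota_S^\ast \pi_\ast$ is the place where I would verify most carefully that no stray sign sneaks in.
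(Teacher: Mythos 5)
Your overall route is the same as the paper's: the paper's proof consists exactly of the observations that replacing $g$ by $g^{\ast}$ in the Gaussian and Hermite factors of \eqref{expression varphi} amounts to swapping $v$ and $w$, that the pullback of $\vartheta$ is $-\vartheta^{t}$, hence $\lambda(\uud)$ picks up $(-1)^{N}$, and that $u^{s}$ becomes $u^{-s}$. Your identities $\iota^{\ast}\varphi^{0}(z,\vbf)=(-1)^{N}\varphi^{0}(z,\sigma\vbf)$ and $\iota^{\ast}\Theta_{\varphi}(z,\tau,\phi)=(-1)^{N}\Theta_{\varphi}(z,\tau,\phi^{\ast})$, together with $\int_{c^{\ast}}\omega=\int_{c}\iota_{S}^{\ast}\omega$, reproduce this faithfully.

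The one place where you go beyond the paper, the interchange $\pi_{\ast}\iota^{\ast}=\iota_{S}^{\ast}\pi_{\ast}$, is also the place where your justification does not hold up, and you were right to flag it. The two signs you invoke are the same sign counted twice: the measure $du/u$ on $(0,\infty)$ is invariant under $u\mapsto 1/u$ (the flip of the limits of integration already cancels against $d(1/u)\,/\,(1/u)=-du/u$ inside the substitution), so the change of variables in the fiber integral contributes no sign at all, while the sign coming from extracting the $du/u$-component of $\iota^{\ast}\eta$, i.e. from $\iota^{\ast}(du/u)=-du/u$, survives. Since $\iota$ reverses the orientation of the fiber of $\pi\colon X\to S$, the general rule is $\pi_{\ast}\iota^{\ast}=-\iota_{S}^{\ast}\pi_{\ast}$, and carrying this through your own chain of equalities yields $(-1)^{N+1}$ instead of the asserted $(-1)^{N}$. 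The tension is visible already in the fiber direction alone: the model integral $\int_{0}^{\infty}(u^{-1}v+uw)\,e^{-\pi(u^{-1}v-uw)^{2}}u^{-2s}\,du/u$ is literally invariant under $(v,w,s)\mapsto(w,v,-s)$, i.e. the integrated identity carries the sign $(-1)^{N}\cdot(-1)$, not $(-1)^{N}$, because one factor of $\lambda$ in $\lambda(\uud)$ is spent on $du/u$ and its sign flip is absorbed by the $u$-substitution rather than surviving to the answer. Note that the paper's proof is silent on this fiber-orientation point (it only records that $u^{s}$ becomes $u^{-s}$), so your extra step is a genuine refinement of the argument; but as written the cancellation you assert is a double count, and you must either exhibit a compensating sign elsewhere (none appears in the steps you or the paper record) or accept that your bookkeeping, done correctly, produces $(-1)^{N+1}$ and therefore does not prove the displayed equation as stated. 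Either way, this step cannot be waved through.
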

\begin{proof} In the definition of $\varphi(z,\vbf)$ given in \eqref{expression varphi}, we see that for the terms $H_{\uud}(\sqrt{\pi}(g^{-1}v+g^t w))$ and the exponential term, the effect of replacing $g$ by $g^\ast$ is the same as swapping $v$ and $w$. Moreover, the pullback of $\vartheta$ is
\begin{align}
\vartheta^\ast=g^tdg^\ast=((dg^{-1})g)^t=-(g^{-1}dg)^t=-\vartheta^t.
\end{align}
Hence, we have $\lambda^\ast=-\lambda$ and $\lambda(\uud)^\ast=(-1)^N\lambda(\uud)$, thus $\varphi(z,v,w)^\ast=(-1)^N\varphi(z,w,v)$. Finally, replacing $g$ by $g^\ast$ changes $u^s$to $u^{-s}$.
\end{proof}

In particular, if $\phi^\ast=(-1)^N\phi$ then the lift \eqref{theta lift at zero} vanishes on the the subspace of $H_{N-1}(S_\Gamma;\Z)$ that is fixed by the involution.

\section{The regularized lift}

Let $k \in \frac{1}{2}\Z$ with $k \neq 1$. In the cases that we will consider we will have $k \leq 0$ and $k \in \Z$. Let $\Delta_{k}$ be the hyperbolic Laplacian
\begin{align}
\Delta_k \coloneqq -y^2 \left (\frac{\partial^2}{\partial x^2}+\frac{\partial^2}{\partial y^2} \right )+kiy\left (\frac{\partial}{\partial x}+i\frac{\partial}{\partial y} \right ).
\end{align}
 A {\em harmonic weak Maass form} of weight $k$ for the congruence subgroup $\Gamma'\subseteq \SL_2(\Z)$ is a smooth function $f \colon \HH \longrightarrow \C$ that satisfies:
\begin{enumerate}
\item $\restr{f}{\gamma,k}=f \quad \textrm{for all} \ \gamma=\begin{pmatrix}
a & b \\ c & d
\end{pmatrix} \in \Gamma'$,
\item $\Delta_k f=0,$
\item for each cusp $r \in \Gamma' \backslash \PP^1(\Q)$ and $\gamma_r \in \SL_2(\Z)$ such that $\gamma_r \8=r$, there is a polynomial $P_{r} \in \C[q^{-1}]$ such that
\begin{align} \label{growth}
f \vert_{\gamma_r}(\tau)-P_r(\tau)=O(e^{-\epsilon y}) \quad y \rightarrow \8,
\end{align}
for some $\epsilon >0$.
\end{enumerate}
We denote\footnote{In \cite{BF04}, this space is denoted by $H^+_k(\Gamma')$.} by $H_k(\Gamma')$ the space of harmonic weak Maass forms of weight $k$. From the growth condition, it follows that we can write $f=f^++f^-$ where 
\begin{align}
f^+(\tau)& = \sum_{\substack{n \in \Q \\ n \gg -\8}}a^+(f,n)e(n\tau), \\
f^-(\tau)& = \sum_{n \in \Q_{>0}}a^-(f,n)\beta_k(n,y) e(-n\tau),
\end{align}
and
\begin{align}
\beta_k(n,y)=\int_y^\8e^{-4\pi n t}t^{-k}dt.
\end{align}
By \cite[Lemma.~3.4]{BF04}, the Fourier coefficients of $f^-$ satisfy $a^-(n)=O(\vert n \vert^{k/2})$, so that
\begin{align}
f^-(\tau)=O(e^{- C y}) \ \ \textrm{as} \ \ y \rightarrow \8,
\end{align}
for some $C >0$. Similarly, a Fourier expansion at the other cusps is
\begin{align}
f \vert_{\gamma_r}(\tau)& = \sum_{\substack{n \in \Q \\ n \gg -\8}}a^+_r(f,n)e(n\tau) + \sum_{n \in \Q_{>0}} a^-_r(f,n)\beta_k(n,y)e(-n\tau) .
\end{align}
Note that as in \eqref{width denominator}, the rational numbers that appear in the Fourier expansion have denominator $w_r$, the width of the cusp.

The raising and lowering operators $R_k$ and $L_k$ are defined by
\begin{align}
R_k=2i\frac{\partial}{\partial \tau}+ky^{-1}, \quad L_k=-2iy^2\dbar,
\end{align}
and the $\xi_k$ operator by
\begin{align}
\xi_k(f)(\tau) \coloneqq y^{k-2} \overline{L_kf(\tau)}=R_{-k} \left (y^k\overline{f(\tau)} \right ).
\end{align}
The image of a weak Maass form of weight $k$ by $\xi_k$ is a cusp form of weight $2-k$. We have the following exact sequence \cite[Corollary.~3.8]{BF04}
\begin{align}
0 \longrightarrow M^{!}_k(\Gamma') \longrightarrow H_k(\Gamma') \xrightarrow{\ \xi_k\ } S_{2-k}(\Gamma') \longrightarrow 0,
\end{align}
where $M^{!}_k(\Gamma')$ is the space of weakly holomorphic modular forms of weight $k$ (whose poles are supported at the cusps).

\subsection{Regularized lift} Let $D \subset \HH$ be a fundamental domain for $\SL_2(\Z)$ in $\HH$, and 
\begin{align}
F= \bigsqcup_{\gamma \in \Gamma' \backslash \SL_2(\Z)} \gamma D
\end{align}
a fundamental domain for $\Gamma'$. For $T\geq 1$ let $U_T \subset D$ be the subset  consisting of $\tau \in D$ with $\im(\tau) > T$. The complement $D \smallsetminus U_T$ is relatively compact. We can decompose the fundamental domain as 
\begin{align}
F= F_T \cup  \bigsqcup_{\gamma \in \Gamma' \backslash \SL_2(\Z)} \gamma U_T,
\end{align}
where $F_T$ is relatively compact. For a harmonic weak Maass form of weight $2-N$ for $\Gamma'$ we define the regularized pairing 
\begin{align} \label{definition of regularized lift}
\Phi(z_1,f,\phi) \coloneqq \CT \left \{ \lim_{T\rightarrow \8} \int_{F_T} E_{\psi}(z_1,\tau,\phi)f(\tau) y^{-\rho} \dxdy \right \}.
\end{align}
We view the cycle $S_n(\phi)$ as a subset of $S_\Gamma$
\begin{align}
S_n(\phi)= \bigcup_{\substack{ \vbf \in \Gamma \backslash V \\ Q(\vbf)=n \\ \phi (\vbf) \neq 0}} S_{[\vbf]} \subset S_\Gamma.
\end{align}
Consider the set
\begin{align}\label{singular locus}
S_f(\phi)& \coloneqq \bigcup_{r \in \Gamma' \backslash \PP^1(\Q)} \bigcup_{\substack{ n \in \Q_{>0} \\ a^+_r(f,-n) \neq 0}} S_n(\phi_r) \subset S_\Gamma,
\end{align}
where $a^+_r(f,-n)$ are the coefficients of the principal part of $f$ at the cusp $r$, and $\phi_r$ is as in \eqref{fourier Epsi p}. Note that the unions are both finite.

\begin{thm} \label{maintheorem1}The regularized integral converges uniformly on compact subsets of $S \smallsetminus S_f(\phi)$. Hence, it defines a smooth form 
\begin{align}
\Phi(z_1,f,\phi) \in \Omega^{N-1}(S \smallsetminus S_f(\phi))^\Gamma.
\end{align}
\end{thm}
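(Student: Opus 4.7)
My plan is to split a fundamental domain $F$ of $\Gamma'$ into a compact core and cuspidal strips at each $r \in \Gamma'\backslash\PP^1(\Q)$. On the compact core, $E_\psi(z_1,\tau,\phi) f(\tau) y^{-\rho-2}$ is smooth in $(z_1, \tau, \rho)$ (since $E_\psi$ is smooth on $S$ by Proposition \ref{rapid decrease}), so its integral converges uniformly on compact subsets of $S$ and contributes a smooth form to $\Phi(z_1,f,\phi)$. All the potential divergence as $T \to \infty$, and all singularities in $z_1$, come from the cuspidal strips. At cusp $r$, the modular invariance \eqref{slash Epsi} with $\phi_r := \omega_f(1,\gamma_r^{-1})\phi$ reduces the cuspidal integral to $\int_1^T \int_0^{w_r} E_\psi(z_1,\tau,\phi_r)(f\vert_{\gamma_r})(\tau) y^{-\rho-2}\,dx\,dy$. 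Inserting the Fourier expansion \eqref{fourier Epsi p} of $E_\psi$ and $f\vert_{\gamma_r} = (f\vert_{\gamma_r})^+ + (f\vert_{\gamma_r})^-$, the $x$-integration selects only the matched pairs $n_E + n_f = 0$, on which the exponential factors multiply to $1$.

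I organize the resulting $y$-integrals into three groups. \textbf{(i)} The singular-vector piece $y\,e_\psi(z_1,\phi_r)$ paired with $a^+_r(f,0)$, yielding $w_r a^+_r(f,0) e_\psi(z_1,\phi_r)\int_1^T y^{-\rho-1}\,dy$, a meromorphic function of $\rho$ with a simple pole at $\rho = 0$ absorbed by $\CT$. \textbf{(ii)} All pairings in which the matched vector $\vbf$ has $Q(\vbf) \leq 0$ (i.e.\ with $a^+_r(f,m)$ for $m \geq 0$), together with all pairings involving the rapidly-decaying coefficients $a^-_r(f,n)\beta_{2-N}(n,y)$ from $f^-$: in the former, $zw = v$ would force $Q(\vbf) = w^t v = \|g^t w\|^2 \geq 0$ with equality only when $w = 0$ (excluded by regularity), hence $X_\vbf = \emptyset$ and $b_\psi(z_1,\sqrt y\,\vbf)$ decays exponentially in $y$ by Proposition \ref{rapid decrease}; in the latter $\beta_{2-N}$ itself decays exponentially. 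Both sub-cases are absolutely convergent for every $\rho$ and every $z_1 \in S$. \textbf{(iii)} Pairings of $a^+_r(f,m)$ for $m < 0$ against Fourier modes indexed by regular $\vbf$ with $Q(\vbf) = -m > 0$.

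Group (iii) is the heart of the argument and is where the hypothesis $z_1 \notin S_f(\phi)$ is used. From \eqref{formalpha}, $\alpha^0(u^2 z_1, \sqrt y\,\vbf)$ carries the Gaussian factor $\exp(-\pi y\,\|u^{-1}g_1^{-1}v - u\,g_1^t w\|^2)$ times a polynomial in $\sqrt y$, and the quadratic exponent has a strict positive infimum over $u \in \R_{>0}$ precisely when $z_1 \notin S_\vbf$ (the infimum vanishes iff some $z = u^2 z_1$ satisfies $zw = v$, i.e.\ $z_1 \in S_\vbf$). Only finitely many $\Gamma$-orbits $[\vbf]$ with $Q(\vbf) = |m|$ and $\phi_r(\vbf) \neq 0$ contribute for each of the finitely many relevant $(r,m)$, and the union of the corresponding $S_\vbf$ is exactly the preimage of $S_f(\phi)$ in $S$. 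For $z_1$ in a compact $K \subset S \smallsetminus S_f(\phi)$, properness of the $\Gamma$-action on $S$ yields a uniform positive lower bound $\delta(K) > 0$ on the distance from $z_1$ to each relevant $S_\vbf$, giving $|b_\psi(z_1,\sqrt y\,\vbf)| \leq C(K) e^{-\pi\delta(K) y}$ for $y \geq 1$ uniformly in $\vbf$; combined with the absolute convergence of the Fourier sum in \eqref{fourier Epsi p} (which controls the decay in $\|\vbf\|$), this gives uniform convergence of the (iii)-contribution on $K$ for every $\rho \in \C$.

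Putting the three groups together and taking $T \to \infty$ yields a function meromorphic in $\rho$ with a simple pole at $\rho = 0$ contributed only by (i); applying $\CT$ then produces a finite $(N-1)$-form on $S \smallsetminus S_f(\phi)$. Smoothness in $z_1$ follows by differentiating under the integral, using the same exponential estimates for $z_1$-derivatives of $b_\psi$, and $\Gamma$-invariance is inherited from $E_\psi$ since the preimage of $S_f(\phi)$ in $S$ is $\Gamma$-stable. The main obstacle is the quantitative geometric input in (iii): identifying the zero locus of the Gaussian exponent with $S_\vbf$ and upgrading this to a uniform exponential decay of $b_\psi(z_1,\sqrt y\,\vbf)$ on compact subsets of $S \smallsetminus S_f(\phi)$. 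Once this estimate is in hand, the remainder of the proof is a routine bookkeeping of Fourier pairs.
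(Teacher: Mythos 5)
Your proposal follows essentially the same route as the paper: split the fundamental domain into a compact core and cuspidal strips, pass to $\phi_r$ at each cusp, insert the Fourier expansions, let $\CT$ absorb the $1/\rho$ pole coming from the constant term $y\,e_\psi(z_1,\phi_r)$ paired with $a^+_r(f,0)$, and control the regular-vector terms by the positivity of the Gaussian exponent, whose infimum over $u$ is exactly the paper's $R(z_1,\vbf)=\lVert g_1^{-1}v\rVert\,\lVert g_1^t w\rVert-Q(\vbf)$, vanishing precisely on $S_\vbf$.

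One step in your group (iii) is too weak as written. You claim $\vert b_\psi(z_1,\sqrt{y}\vbf)\vert\le C(K)e^{-\pi\delta(K)y}$ with $C(K)$ and $\delta(K)$ \emph{uniform in} $\vbf$, and then appeal to ``absolute convergence of the Fourier sum'' to sum over $\vbf$. But for each fixed $n$ there are infinitely many lattice vectors $\vbf$ with $Q(\vbf)=n$ and $\phi_r(\vbf)\neq 0$, and $b_\psi$ carries polynomial prefactors growing with $\lVert\vbf\rVert$; a $\vbf$-independent exponential bound in $y$ alone does not produce a $y$-integrable majorant for the sum. What is actually needed (and what the paper proves in Lemma \ref{lemAU}(1)) is that the exponent itself grows linearly in $\lVert\vbf\rVert$: $C_U\lVert\vbf\rVert\le R(z_1,\vbf)+n_0$ uniformly on a compact neighbourhood $U$, so that all but finitely many vectors contribute terms decaying exponentially in both $y$ and $\lVert\vbf\rVert$, while the finitely many remaining vectors are handled by the lower bound $R\ge\epsilon$ off $S_f(\phi)$. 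The same point affects your group (ii), where the sum over the infinitely many regular $\vbf$ with $Q(\vbf)\le 0$ also requires this growth in $\lVert\vbf\rVert$ rather than Proposition \ref{rapid decrease} (which concerns decay in $u$, not $y$). With this quantitative input restored, your argument coincides with the paper's.
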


\begin{proof}
The proof follows the idea of the proof of \cite[Proposition.~5.6]{BF04}, with the Kudla-Millson form $\Psi_{\KM}$ being replaced by the form $b_\psi$ that we consider in this paper. We can split the truncated fundamental domain as
\begin{align}
F_T= F_1 \cup  \bigsqcup_{\gamma \in \Gamma' \backslash \SL_2(\Z)} \gamma \left (U_1 \smallsetminus U_T \right),
\end{align}
where $\tau \in U_1 \smallsetminus U_T$ if $1 < \im(\tau) \leq T$. We write

\begin{align}
\Phi(z_1,f,\phi)=\sum_{\gamma \in \Gamma' \backslash \SL_2(\Z)} \widetilde{\Phi}_\gamma(z_1,f,\phi)+\int_{F_1}E_{\psi}(z_1,\tau,\phi)f(\tau) \dxdy
\end{align}
where
\begin{align}
\widetilde{\Phi}_\gamma(z_1,f,\phi) \coloneqq \CT \left \{ \lim_{T\rightarrow \8} \int_{U_1 \smallsetminus U_T} E_{\psi}(z_1,\tau,\phi)\vert_{\gamma} (f \vert_\gamma)(\tau) y^{-\rho} \dxdy \right \}.
\end{align}
Since the integral over $F_1$ converges (it is relatively compact), it is enough to show the convergence of $\widetilde{\Phi}_\gamma(z_1,f,\phi)$. Let us first take $\gamma=1$, and show the convergence of $\widetilde{\Phi}_1(z_1,f,\phi)$. We write $f=f^++f^-$ and decompose the integral as
\begin{align}
\widetilde{\Phi}_1(z_1,f,\phi)=\widetilde{\Phi}_1(z_1,f^+,\phi)+\widetilde{\Phi}_1(z_1,f^-,\phi).
\end{align}The integral
\begin{align}
\widetilde{\Phi}_1(z_1,f^-,\phi)=\lim_{T\rightarrow \8} \int_{U_1 \smallsetminus U_T} E_{\psi}(z_1,\tau,\phi)f^-(\tau) \dxdy
\end{align}
converges by the rapid decrease of $f^-$ and the moderate growth of $E_{\psi}(z_1,\tau,\phi)$ as $y \rightarrow \8$. It is left to show the convergence of $\widetilde{\Phi}_1(z_1,f^+,\phi)$. The domain $U_1 \smallsetminus U_T$ is the rectangle with $-\frac{1}{2} \leq x < \frac{1}{2}$ and $1 <y \leq T$, so that we have
\begin{align}
\lim_{T \rightarrow \8} \int_{U_1 \smallsetminus U_T} E_{\psi}(z_1,\tau,\phi)f^+(\tau)y^{-\rho}\dxdy=\lim_{T \rightarrow \8}  \int_1^T \intx E_{\psi}(z_1,\tau,\phi)f^+(\tau)y^{-\rho} \frac{dxdy}{y^2}.
\end{align}
Inserting the Fourier expansions \eqref{fourier Epsi} gives
\begin{align} \label{intermediate step}
\int_1^\8 \intx  E_\psi(z_1,\tau,\phi)f^+(\tau)y^{-\rho} \dxdy & = a^+(f,0)e_\psi(z_1,\phi) \int_1^\8 y^{-\rho}\frac{dy}{y} \\
& + \int_1^\8\sum_{\vbf \in V_\reg} b_\psi(z_1,\sqrt{y}\vbf) \phi(\vbf) a^+(f,-Q(\vbf))y^{-\rho}\frac{dy}{y}.
\end{align}
The first term does not contribute, since the constant term in the Laurent expansion is
\begin{align} \label{constant term CT}
\CT \left ( \lim_{T\rightarrow \8} \int_1^T \frac{dy}{y^{1+\rho}}\right ) = \CT \left (\frac{1}{\rho} \right )=0.
\end{align}
Recall from \eqref{formalpha} that
\begin{align}
\alpha^0(z,\vbf) &  = P(g^{-1}v,g^t w) e^{-\pi \lVert g^{-1}v-g^t w \rVert^2},
\end{align}
where we set
\begin{align}
P(v,w) \coloneqq -(-1)^{l-1}\frac{1}{2^{N}\pi^{\frac{N-1}{2}}} \sum_{l=1}^N (-1)^{l-1} \langle v-w, e_l \rangle \sum_{\vert \uud \vert=N-1} H_{\uud}(\sqrt{\pi}(v+w)) \lambda_{I_l}(\uud) \in \C[V] \otimes \Omega^N(X).
\end{align}
Note that $P(v,w) \in \C[V] \otimes \Omega^N(X)$ has degree $N$.
Writing $g=g_1 u$, we can use Lemma \ref{lemmapolynomial} to bound
\begin{align}
\vert b_\psi(z_1,\sqrt{y}\vbf) \vert \leq C y^\frac{N}{2} \lVert g_1^{-1}v \rVert^{\frac{N}{2}} \lVert g_1^tw \rVert^{\frac{N}{2}}   \vert P(g_1^{-1}v,g_1^tw)\vert \exp \left (-2\pi y R(z_1,\vbf) \right ),
\end{align}
where
\begin{align}
R(z_1,\vbf) \coloneqq \lVert g_1^{-1}v \rVert \lVert g_1^tw \rVert-Q(\vbf).
\end{align}
By the Cauchy-Schwartz inequality we have
\begin{align}
\lVert g_1^tw \rVert\lVert g_1^{-1}v \rVert \geq \vert \langle v,w \rangle \vert =\vert Q(\vbf) \vert,
\end{align}
where the equality holds if and only if the vectors $g_1^{-1}v$ and $g_1^tw$ are parallel. This is the case if and only if $z_1 \in S_\vbf$. Hence $R(z_1,\vbf)$ is always non-negative, since
\begin{align}
R(z_1,\vbf)= \lVert g_1^tw \rVert\lVert g_1^{-1}v \rVert -\vert Q(\vbf) \vert+\vert Q(\vbf) \vert-Q(\vbf) \geq 0,
\end{align} 
and is zero if and only if $z_1 \in S_\vbf$. (Recall that $S_\vbf$ is empty if $Q(\vbf) < 0$.)

 For $\epsilon>0$ we define the open neighbourhood of $S_\vbf$
\begin{align}
S_\vbf^\epsilon \coloneqq \left \{ z_1 \in S \ \vert \  R(z_1,\vbf) < \epsilon \right \}.
\end{align}
Let $U \subset S$ be a relatively compact open subset and
\begin{align}
A_\epsilon(U,f) \coloneqq \left \{ \vbf \in V_\reg \cap \supp(\phi) \ \vert \ a^+(f,-Q(\vbf)) \neq 0 \ \textrm{and} \ S_\vbf^\epsilon \cap U \neq \emptyset \right \}.
\end{align}
Let $B_\epsilon(U,f) \subset V_\reg \cap \supp(\phi)$ be such that
\begin{align}
V_\reg \cap \supp(\phi) =A_\epsilon(U,f) \sqcup B_\epsilon(U,f).
\end{align}

By Lemma \ref{lemAU}, there is a constant $C_U>0$ such that for all vectors $\vbf \in V_{\reg}$ with $a^+(f,-Q(\vbf)) \neq 0$, we have
\begin{align}
C_U \left (  \lVert \vbf \rVert-M \right ) \leq C_U \left (  \lVert \vbf \rVert-\frac{n_0}{C_U} \right ) \leq  R(z_1, \vbf),
\end{align}
where $M \coloneqq \left \lceil{n_0/C}\right \rceil $. We split the sum as 
\begin{align}
\sum_{\vbf \in B_\epsilon(U,f)}=\sum_{\substack{\vbf \in B_\epsilon(U,f) \\ \lVert \vbf \rVert \leq M   }}+\sum_{\substack{\vbf \in B_\epsilon(U,f) \\ M < \lVert \vbf \rVert}} 
\end{align}
The first sum is a finite sum, so we can bring the integral inside of the sum. Since $\vbf \in B_\epsilon(U,f)$,  we have $R(z_1,\vbf) \geq \epsilon$ and
\begin{align}
b_\psi(z_1,\sqrt{y}\vbf) =  y^\frac{N}{2} O( \exp \left (-2\pi y \epsilon \right )) \quad \textrm{as} \ y\rightarrow \8,
\end{align}
uniformly on $U$. Hence
\begin{align}
 \sum_{\substack{\vbf \in B_\epsilon(U,f) \\ \lVert \vbf \rVert \leq M}} \int_1^\8 b_\psi(z_1,\sqrt{y}\vbf)\phi(\vbf)a^+(f,-Q(\vbf))y^{-\rho} \frac{dy}{y}
\end{align}
converges at $\rho=0$ uniformly on $U$. On the other hand, since the exponent of
\begin{align}
b_\psi(z_1,\sqrt{y}\vbf) =  y^\frac{N}{2} O( \exp \left (-2\pi y C_U \left (  \lVert \vbf \rVert-M \right ) \right )) \quad \textrm{as} \ y\rightarrow \8,
\end{align}
is positive when $M <\lVert \vbf \rVert$, the sum
\begin{align}
\sum_{\substack{\vbf \in B_\epsilon(U,f) \\ M< \lVert \vbf \rVert }}  b_\psi(z_1,\sqrt{y}\vbf)a^+(f,-Q(\vbf))y^{-\rho} \frac{dy}{y}= O( \exp \left (-2\pi y \epsilon \right )) \quad \textrm{as} \ y\rightarrow \8
\end{align}
is rapidly decreasing and the integral converges. Let 
\begin{align}
\Stil_f(\phi)_1 \coloneqq \bigcup_{\substack{ n \in \Q_{>0} \\ a^+(f,-n) \neq 0}} \bigcup_{\substack{ \vbf \in  V \\ Q(\vbf)=n \\ \phi (\vbf) \neq 0}} S_{\vbf}.
\end{align} If $z_1 \in S \smallsetminus \Stil_f(\phi)_1$, then by Lemma \ref{lemAU} we can pick $\epsilon$ and $U$ small enough that $A_\epsilon(U,f)$ is empty, {\em i.e.} $ B_\epsilon(U,f)=V_\reg \cap \supp(\phi)$. Thus, we showed that the form $\widetilde{\Phi}_1(z_1,f,\phi)$ is smooth form on $S$ outside of $\Stil_f(\phi)_1$, which descends to a smooth form on $S_\Gamma$ outside of $S_f(\phi)_1$.

For the other terms, let $r \in \Gamma' \backslash \PP^1(\Q)$ be the cusp $r=\gamma \8$. By plugging-in the Fourier expansion at $r$ given in \eqref{fourier Epsi p}, the right-hand side of \eqref{intermediate step} is
\begin{align}
a^+_r(f,0)e_\psi(z_1,\phi_r) \int_1^\8 y^{-\rho}\frac{dy}{y} + \int_1^\8\sum_{\vbf \in V_\reg} b_\psi(z_1,\sqrt{y}\vbf) \phi_r(\vbf) a^+_r(f,-Q(\vbf))y^{-\rho}\frac{dy}{y}.
\end{align}
In particular, it only depends on the cusp $r=\gamma \8$, and that $\widetilde{\Phi}_{\gamma}(z_1,f,\phi)$ converges outside of the set
\begin{align}
\Stil_f(\phi)_r = \bigcup_{\substack{ n \in \Q_{>0} \\ a^+_r(f,-n) \neq 0}} \bigcup_{\substack{ \vbf \in  V \\ Q(\vbf)=n \\ \phi_r (\vbf) \neq 0}} S_{\vbf}.
\end{align}
The projection of $\bigcup_r \Stil_f(\phi)_r$ in $S_\Gamma$ is precisely $S_f(\phi)$.
 \end{proof}

We also deduce the following from Theorem \ref{maintheorem1}.
\begin{prop}\label{proplim} We have 
\begin{align}
\Phi(z_1,f,\phi) &= \lim_{T\rightarrow \8}  \bigg( \int_{F_T} E_{\psi}(z_1,\tau,\phi)f(\tau) \dxdy - \log(T) Q_\psi(z_1,f,\phi)   \bigg) .
\end{align}
where
\begin{align}
Q_\psi(z_1,f,\phi) \coloneqq \sum_{r \in \Gamma' \backslash \SL_2(\Z) }w_r a^+_r(f,0)e_\psi(z_1,\phi_r),
\end{align}
and $w_r$ is the width of the cusp $r$.
\end{prop}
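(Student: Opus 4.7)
The plan is to locate the single source of divergence in $\int_{F_T} E_\psi f \dxdy$ as $T \to \infty$, show it matches $Q_\psi \log T$, and compare the two ways of regularizing it. The proof of Theorem \ref{maintheorem1} already provides the decomposition and all the needed estimates; what remains is to isolate the divergent term explicitly.

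Following that proof, I would decompose
\begin{align}
\int_{F_T} E_\psi f \, y^{-\rho} \dxdy = \int_{F_1} E_\psi f \, y^{-\rho} \dxdy + \sum_{\gamma \in \Gamma'\backslash \SL_2(\Z)} \int_{U_1 \smallsetminus U_T} (E_\psi f)\vert_\gamma \, y^{-\rho} \dxdy .
\end{align}
The $F_1$-integral is compact and holomorphic in $\rho$, contributing equally to both regularizations. Grouping the $w_r$ cosets with $\gamma \infty = r$ and unfolding by $\gamma_r^{-1}$ gives an integral over the strip $\{-\tfrac{1}{2} < x < w_r - \tfrac{1}{2}, \, 1<y\leq T\}$ of $E_\psi(z_1,\tau,\phi_r)(f\vert_{\gamma_r})(\tau) y^{-\rho}\dxdy/y^2$. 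Plugging in the Fourier expansion \eqref{fourier Epsi p} together with that of $f\vert_{\gamma_r}$, the $x$-integration picks out the constant Fourier mode, producing
\begin{align}
w_r\, e_\psi(z_1,\phi_r)\, a^+_r(f,0) \int_1^T y^{-\rho-1} dy \, + \, w_r \sum_{\vbf \in V_\reg} \phi_r(\vbf)\, a^+_r(f,-Q(\vbf)) \int_1^T b_\psi(z_1, \sqrt{y}\vbf)\, y^{-\rho-1} dy
\end{align}
plus an exponentially decaying contribution from the $f^-$-part.

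The estimates on $b_\psi$ used in the proof of Theorem \ref{maintheorem1} (the bound involving $R(z_1,\vbf) > 0$ for $z_1 \notin S_f(\phi)$) show that the second sum and the $f^-$-piece converge absolutely as $T\to\infty$ and are holomorphic at $\rho=0$. Hence they contribute the same finite value under both regularizations. All divergence is concentrated in the first term, which summed over cusps equals $Q_\psi(z_1,f,\phi)\cdot (1 - T^{-\rho})/\rho$. This single expression behaves identically under the two prescriptions: taking $T\to\infty$ first with $\re\rho>0$ yields $Q_\psi/\rho$, whose constant term at $\rho=0$ vanishes, while setting $\rho=0$ first gives $Q_\psi \log T$, which is exactly removed by the $-\log(T)Q_\psi$ subtraction. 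Both regularizations therefore produce the same finite value, proving the identity. The main technical point is the interchange of the sum over cosets, the Fourier expansion, and the $y$-integration on the non-divergent pieces, which is justified by the absolute-convergence estimates already established in Theorem \ref{maintheorem1}.
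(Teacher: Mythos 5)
Your proposal is correct and follows essentially the same route as the paper: both decompose the truncated integral into the compact $F_1$ piece plus cuspidal pieces, observe that the only divergence is the zero-mode term $a^+_r(f,0)e_\psi(z_1,\phi_r)\int_1^T y^{-\rho-1}dy$ at each cusp (with the width $w_r$ arising from grouping the cosets mapping to $r$), and note that the constant-term-at-$\rho=0$ prescription and the $\log(T)Q_\psi$ subtraction remove exactly this same term while all other contributions are absolutely convergent and holomorphic at $\rho=0$ by the estimates of Theorem \ref{maintheorem1}. Your write-up merely makes the unfolding to a strip of width $w_r$ more explicit than the paper does.
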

\begin{proof} Note that $\widetilde{\Phi}_\gamma(z_1,f,\phi)$ only depends on the cusp $r=\gamma \8$, so that
\begin{align}\sum_{\gamma \in \Gamma' \backslash \SL_2(\Z)} \widetilde{\Phi}_\gamma(z_1,f,\phi)=\sum_{r \in \Gamma' \backslash \PP^1(\Q)} w_r\widetilde{\Phi}_{\gamma_r}(z_1,f,\phi),
\end{align}
where $w_r$ is the width of $r$ (it is also equal to the number of translates $\gamma D$ in the fundamental domain $F$, that contain the cusp $r$). The constant term $\CT$ in the definition of $\widetilde{\Phi}_{\gamma_r}(z_1,f,\phi)$ was necessary to remove the divergent constant term
\begin{align}
\lim_{T\rightarrow \8} a^+_r(f,0)e_\psi(z_1,\phi_r) \int_1^T y^{-\rho}\frac{dy}{y},
\end{align}see \eqref{constant term CT}. It follows from
\begin{align}
\lim_{T\rightarrow \8} \int_1^T \frac{dy}{y}=\lim_{T\rightarrow \8} \log(T)
\end{align}
and the rest of the proof of Theorem \ref{maintheorem1}, that the regularization can also be done by substracting 
\begin{align}
\sum_{\gamma \in \Gamma' \backslash \SL_2(\Z)} a^+_r(f,0)e_\psi(z_1,\phi_r) \log(T).
\end{align}
\end{proof}

\subsection{Two lemmas}

\begin{lem}\label{lemmapolynomial} Let $P(\vbf) \in \C[V]$ be a polynomial of degree $N$ and $\vbf=(v,w) \in V_\reg \cap \supp(\phi)$ a regular vector. There is a constant $C>0$ depending on $P$ such that
\begin{align}
\int_0^\8   P(\sqrt{y}u^{-1}v,\sqrt{y}uw) e^{-\pi y \lVert u^{-1}v-uw \rVert^2} \frac{du}{u} \leq  C \sqrt{y}^{N}\lVert v \rVert^{\frac{N}{2}} \lVert w \rVert^{\frac{N}{2}} P(v,w) e^{-2\pi y \left ( \lVert v \rVert \lVert w \rVert-Q(\vbf) \right ) }
\end{align}
for $y \geq 1$.
\end{lem}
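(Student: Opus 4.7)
The plan is a Laplace-type estimate that extracts the correct Gaussian minimum by a single change of variables and then reduces to a bounded scalar integral.

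First I would substitute $u = u_0 t$ where $u_0 \coloneqq (\|v\|/\|w\|)^{1/2}$, noting that $du/u = dt/t$. A direct expansion gives
\begin{align*}
\|u^{-1}v - uw\|^2 = u^{-2}\|v\|^2 + u^2\|w\|^2 - 2\langle v,w\rangle = \beta(t^{-2}+t^2-2) + 2(\beta - Q(\vbf)),
\end{align*}
where $\beta \coloneqq \|v\|\|w\|$. The second term is independent of $t$, so the factor $e^{-2\pi y(\beta - Q(\vbf))}$ pulls out of the integral immediately and matches the corresponding factor on the right-hand side of the claimed bound.

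Second, I would expand $P = \sum_\alpha c_\alpha v^{\alpha_v} w^{\alpha_w}$ as a sum of monomials of degree $|\alpha_v|+|\alpha_w|\leq N$ and bound the integrand monomial by monomial. After the substitution, a typical monomial of $P(\sqrt{y}u^{-1}v, \sqrt{y}uw)$ becomes
\begin{align*}
c_\alpha\,(\sqrt{y})^{|\alpha|}\,u_0^{|\alpha_w|-|\alpha_v|}\,t^{|\alpha_w|-|\alpha_v|}\,v^{\alpha_v}w^{\alpha_w}.
\end{align*}
The crucial algebraic identity is $u_0^{|\alpha_w|-|\alpha_v|}\|v\|^{|\alpha_v|}\|w\|^{|\alpha_w|} = \beta^{|\alpha|/2}$, which collapses the $v,w$-dependence of each term into a single power of $\beta$. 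Since $y\geq 1$ and since $\beta$ is bounded below on $V_\reg \cap \supp(\phi)$ (the support of a finite Schwartz function lies in a finite union of lattice cosets, on which regular vectors have a uniform lower bound for $\|v\|\|w\|$), one deduces $(\sqrt{y})^{|\alpha|}\beta^{|\alpha|/2} \leq C (\sqrt{y})^N \|v\|^{N/2}\|w\|^{N/2}$ for every $|\alpha|\leq N$.

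Third, each monomial reduces the problem to bounding the scalar integral
\begin{align*}
\int_0^\infty t^m\, e^{-\pi y\beta(t^{-2}+t^2-2)}\,\frac{dt}{t}, \qquad |m|\leq N.
\end{align*}
Because $t^{-2}+t^2-2$ vanishes only at $t=1$ and grows like $t^{\pm 2}$ at the endpoints, the exponential decay strictly dominates the polynomial factor $t^m$; splitting at $t=1$ and using the symmetry $t\mapsto 1/t$ shows that the integral is uniformly bounded whenever $y\beta$ is bounded away from $0$, which our hypotheses guarantee. Summing the finitely many monomial contributions gives the stated inequality.

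The main obstacle I expect is the bookkeeping in identifying the factor written as $P(v,w)$ on the right-hand side with the natural majorant $\sum_\alpha |c_\alpha|\|v\|^{|\alpha_v|}\|w\|^{|\alpha_w|}$ produced by the monomial-by-monomial argument; this identification must be read with some charity, with the implicit constant $C$ absorbing the coefficient data of $P$ and any mismatch between $|\alpha|$ and $N$ allowed by the uniform lower bound on $\beta$.
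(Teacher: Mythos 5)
Your proposal is correct and follows essentially the same route as the paper: expand $P$ into monomials, pull out the factor $e^{2\pi yQ(\vbf)}$ from the Gaussian, and use the lattice bound $\lVert v\rVert,\lVert w\rVert\geq 1/D$ on $\supp(\phi)$ to absorb the mismatch in degrees into $\sqrt{y}^{N}\lVert v\rVert^{N/2}\lVert w\rVert^{N/2}$. The only cosmetic difference is that your substitution $u=\sqrt{\lVert v\rVert/\lVert w\rVert}\,t$ and the monotonicity/Laplace estimate of $\int_0^\8 t^m e^{-\pi y\beta(t^2+t^{-2}-2)}\frac{dt}{t}$ reprove by hand exactly the $K$-Bessel evaluation and the bound $K_r(a)\leq C_r e^{-2a}$ that the paper quotes from Lang, and your reading of the factor $P(v,w)$ as a majorant absorbed into $C$ matches how the paper uses the lemma.
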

\begin{proof}
For $I=\{i_1, \dots,i_N\} \in (\Z_{\geq 0})^N$ we write $v^I \coloneqq v_1^{i_1} \cdots v_N^{i_N}$. We can write $P$ as a sum of monomials
\begin{align}
P(v,w)=\sum_{I,J} a_{I,J}v^Iw^J,
\end{align} 
where the sum is over all $N$-tuples $I,J \in (\Z_{\geq 0})^N$ with $\vert I \vert+\vert J \vert \leq N$.
We have
\begin{align}
& \int_0^\8   P(\sqrt{y}u^{-1}v,\sqrt{y}uw) e^{-\pi y \lVert u^{-1}v-uw \rVert^2} \frac{du}{u} \\
&\qquad = \sum_{I,J} a_{I,J}v^Iw^J \sqrt{y}^{\vert I \vert+\vert J \vert} e^{2 \pi y Q(\vbf)}\int_0^\8  e^{-\pi y u^{-2}\lVert v \rVert^2-\pi yu^2 \lVert w \rVert^2} u^{\vert J \vert-\vert I \vert}\frac{du}{u} \\
&\qquad = \frac{1}{2}\sum_{I,J} a_{I,J}v^Iw^J \sqrt{y}^{\vert I \vert+\vert J \vert}  \left ( \frac{\lVert v \rVert}{\lVert w \rVert} \right )^{\frac{\vert J \vert-\vert I \vert}{2}} e^{2 \pi y Q(\vbf)} K_{\frac{\vert J \vert-\vert I \vert}{2}}(\pi y \Vert v \rVert \lVert w \rVert).
\end{align}
For a fixed real number  $r$ there is a positive constant $C_r$ (depending on $r$) such that for all $a \gg 0$
\begin{align}
K_{r}(a) \leq C_re^{-2a};
\end{align}
see \cite[K7 p.~271]{langelliptic}. Since $\vbf \in V_\reg \cap \supp(\phi) \subset \frac{1}{D}(\Z^N \times \Z^N)$ for some integer $D \geq 1$, we have
\begin{align}
\left ( \frac{\lVert v \rVert}{\lVert w \rVert} \right )^{\frac{\vert J \vert-\vert I \vert}{2}} \leq \frac{\lVert v \rVert^\frac{\vert J \vert}{2} \lVert w \rVert^\frac{\vert I \vert}{2}}{\lVert v \rVert^\frac{\vert I \vert}{2} \lVert w \rVert^\frac{\vert J \vert}{2}}\leq D^N \lVert v \rVert^{\frac{N}{2}} \lVert w \rVert^{\frac{N}{2}}.
\end{align}  
\end{proof}

\begin{lem} \label{lemAU} Let $U \subset S$ be a relatively open compact neigbourhood of a point $z_1 \in S$.
\begin{enumerate}
\item If $\vbf \in V_\reg \cap \supp(\phi)$ and $a^+(f,-Q(\vbf)) \neq 0$, then there is a constant $C_U>0$ that depends only on $U$ and such that
\begin{align}
C_U\lVert \vbf \rVert \leq R(z_1,\vbf)+n_0
\end{align}
for all $z_1 \in U$ and some $n_0 \in \NN$.
\item The set $A_\epsilon(U,f)$ is finite. 
\item If $z_1 \in S \smallsetminus \Stil_f(\phi)_1$, then we can take $\epsilon$ and $U$ small enough that $A_\epsilon(U,f)$ is empty.
\end{enumerate}
\end{lem}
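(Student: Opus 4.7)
The plan is to prove (1) first by exploiting the identity
\[
R(z_1,\vbf)+Q(\vbf)=\lVert g_1^{-1}v\rVert\,\lVert g_1^tw\rVert
\]
and combining three elementary lower bounds. First, since the principal part of $f$ at $\8$ is a polynomial in $q^{-1}$, there is an $n_0\in\NN$ with $a^+(f,m)=0$ for $m<-n_0$, so the hypothesis $a^+(f,-Q(\vbf))\neq 0$ forces $Q(\vbf)\leq n_0$. Second, since $U$ is relatively compact in $S\simeq G^1/K$, representatives $g_1$ lie in a compact subset of $G^1$, yielding a constant $c_U>0$ with $\lVert g_1^{-1}v\rVert\geq c_U\lVert v\rVert$ and $\lVert g_1^tw\rVert\geq c_U\lVert w\rVert$ for all $z_1\in U$. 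Third, because $\supp(\phi)$ is contained in a finite union of lattice cosets and $\vbf$ is regular, there is a uniform $c_L>0$ with $\lVert v\rVert,\lVert w\rVert\geq c_L$. Combined with $\max(\lVert v\rVert,\lVert w\rVert)\geq\lVert\vbf\rVert/\sqrt{2}$ this gives $\lVert g_1^{-1}v\rVert\,\lVert g_1^tw\rVert\geq C_U\lVert\vbf\rVert$ with $C_U\coloneqq c_U^2c_L/\sqrt{2}$ depending only on $U$, and (1) follows.

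Part (2) is then immediate: if $\vbf\in A_\epsilon(U,f)$, some $z_1\in U$ satisfies $R(z_1,\vbf)<\epsilon$, and (1) bounds $\lVert\vbf\rVert<(\epsilon+n_0)/C_U$; only finitely many vectors in the finitely many lattice cosets of $\supp(\phi)$ satisfy this bound.

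For (3), I will fix a relatively compact neighborhood $U_0$ of $z_1$ and use (2) to enumerate $A_1(U_0,f)=\{\vbf_1,\dots,\vbf_k\}$. The key observation is that $R(z_1,\vbf_i)>0$ for each $i$: if $Q(\vbf_i)>0$ then $\vbf_i$ contributes to the union defining $\Stil_f(\phi)_1$, so the hypothesis $z_1\notin\Stil_f(\phi)_1$ gives $z_1\notin S_{\vbf_i}=R(\cdot,\vbf_i)^{-1}(0)$; if $Q(\vbf_i)\leq 0$ then $R(z_1,\vbf_i)\geq\lVert g_1^{-1}v_i\rVert\,\lVert g_1^tw_i\rVert>0$ directly, since $v_i,w_i\neq 0$. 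By continuity of $R$ in its first argument and finiteness of the list, one can shrink $U\subseteq U_0$ and $\epsilon\in(0,1]$ so that $R(z,\vbf_i)>\epsilon$ for every $z\in U$ and every $i$. Since $A_\epsilon(U,f)\subseteq A_1(U_0,f)$ by monotonicity, this forces $A_\epsilon(U,f)=\emptyset$.

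The only step with genuine content is the positive lower bound $\lVert v\rVert,\lVert w\rVert\geq c_L$ used in (1), which rests on the support of $\phi$ sitting inside finitely many lattice cosets of $V$; everything else is compactness, continuity of $R$, and finite-intersection bookkeeping.
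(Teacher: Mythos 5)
Your proof is correct and follows essentially the same route as the paper: the identity $R(z_1,\vbf)+Q(\vbf)=\lVert g_1^{-1}v\rVert\,\lVert g_1^tw\rVert$ together with the lattice bound on $\supp(\phi)$, compactness of $U$, and $Q(\vbf)\leq n_0$ for part (1), the resulting norm bound for finiteness in (2), and continuity of $R$ plus finiteness of the relevant vector set in (3). Your organization of (3) (enumerating $A_1(U_0,f)$ and handling $Q(\vbf)\leq 0$ vectors directly rather than noting $S_\vbf=\emptyset$) is only a cosmetic variation on the paper's shrinking argument.
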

\begin{proof} \begin{enumerate} \item For $g_1 \in G^1$ let 
\begin{align}
\lVert g_1 \rVert \coloneqq \sup_{v \in \R^N} \frac{\lVert g_1v \rVert}{\lVert v \rVert}.
\end{align} Note that $\lVert kv \rVert=\lVert v \rVert$ for $k \in K$, so that $z_1=g_1 K \longmapsto \lVert g_1 \rVert$ is a well-defined continuous function on $S$. Let $D \geq 1$ be an integer such that $\supp(\phi) \subset \frac{1}{D} (\Z^N \times \Z^N)$ and let $C_U>0$ be the constant
\begin{align}
C_U \coloneqq \frac{1}{\sqrt{2}D \sup_{z_1 \in U} \lVert g_1 \rVert \lVert g_1^{-t} \rVert}.
\end{align}
Since the matrix norm satisfies $\lVert A v \rVert \leq \lVert A \rVert \lVert v \rVert$, we have
\begin{align}
R(z_1,\vbf)+Q(\vbf)=\lVert g_1^{-1}v \rVert \lVert g_1^tw \rVert \geq \sqrt{2}D C_U \lVert v \rVert\lVert w \rVert
\end{align} for all $z_1 \in U$ and $\vbf \in V_\reg$. The vector $\vbf$ is contained in $\frac{1}{D} (\Z^N \times \Z^N)$ so that $\lVert v \rVert,\lVert w \rVert \geq 1/D$ and
\begin{align}
 \frac{1}{\sqrt{2}} \lVert \vbf \rVert = \frac{1}{\sqrt{2}} \left (\lVert v \rVert^2+\lVert w \rVert^2 \right )^{\frac{1}{2}} \leq \max(\lVert v \rVert,\lVert w \rVert) \leq D \lVert v \rVert\lVert w \rVert.
\end{align}
Since the principal part of $f^+$ is finite, there is an $n_0 \in \NN$ such that $a^+(f,-n)=0$ if $n > n_0$. Thus, if $a^+(f,-Q(\vbf)) \neq 0$, then $Q(\vbf) \leq n_0$. We deduce that
\begin{align} \label{step in Lemma}
C_U\lVert \vbf \rVert \leq R(z_1,\vbf)+Q(\vbf) \leq R(z_1,\vbf)+n_0.
\end{align}

\item  If $\vbf \in A_\epsilon(U,f)$, then there is a $z_1 \in U$ such that $R(z_1,\vbf)<\epsilon$. By the first part of the lemma, it implies that $\lVert \vbf \rVert$ is bounded by
\begin{align}
\lVert \vbf \rVert \leq \frac{R(z_1,\vbf)+n_0}{C_U} < \frac{\epsilon+n_0}{C_U}.
\end{align} It follows that $A_\epsilon(U,f)$ is finite. 

\item Now suppose that $z_1 \in S \smallsetminus \Stil_f(\phi)_1$. First, note that
\begin{align}
\Stil_f(\phi)_1 = \bigcup_{\substack{ n \in \Q_{>0} \\ a^+(f,-n) \neq 0}} \bigcup_{\substack{ \vbf \in  V \\ Q(\vbf)=n \\ \phi (\vbf) \neq 0}} S_\vbf = \bigcup_{\substack{ \vbf \in V \cap \supp(\phi) \\ a^+(f,-Q(\vbf)) \neq 0}} S_\vbf,
\end{align}
since for vectors with $Q(\vbf)<0$ the set $S_\vbf$ is empty.  Thus, if $z_1 \in S \smallsetminus \Stil_f(\phi)_1$ then for all $\vbf \in V_\reg$ such that $a^+(f,-Q(\vbf)) \neq 0$, we have $R(z_1,\vbf)>0$. The function $R(z_1,\vbf)$ is continuous in $z_1$, so for each of the finitely many vectors $\vbf \in A_\epsilon(U,f)$ we can find a neighbourhood $U_\vbf \subset U$ of $z_1$ such that 
\begin{align}
\inf_{z_1 \in U_\vbf} R(z_1,\vbf)> \epsilon'>0
\end{align}
for some $0<\epsilon' <\epsilon$. Let $U' \subseteq U$ be the neighbourhood
\begin{align}
U' \coloneqq \bigcap_{\vbf \in A_\epsilon(U,f)} U_\vbf.
\end{align}  Since $U' \subseteq U$ and $0<\epsilon' <\epsilon$, we have
\begin{align}
A_{\epsilon'}(U',f) \subseteq A_\epsilon(U,f).
\end{align}
We claim that $A_{\epsilon'}(U',f)$ is empty. Suppose that $\vbf_0 \in A_{\epsilon'}(U',f)$. Then there is $\tilde{z}_1 \in U'$ such that $R(\tilde{z}_1,\vbf_0)<\epsilon'$. On the other hand, since $\tilde{z}_1 \in U' \subset U_{\vbf_0}$, we also have
\begin{align}
R(\tilde{z}_1,\vbf_0)>\inf_{z_1 \in U_\vbf} R(z_1,\vbf_0)> \epsilon',
\end{align}
which contradicts $R(\tilde{z}_1,\vbf_0)<\epsilon'$. This shows that $A_{\epsilon'}(U',f)$ is empty.
\end{enumerate}
\end{proof}

\subsection{Local integrability}

Let $\Delta \subset \R^{N-1}$ be an $(N-1)$-simplex. A chain $c \colon \Delta \longrightarrow S_\Gamma$ is smooth if there exists an open neighbourhood $\Delta \subset U \subset \R^{N-1}$ of the simplex such that the map extends to a smooth immersion $c \colon U \longrightarrow S_\Gamma$. 

When the cycle $c$ intersects $S_\vbf$ transversally in $S$, then the preimage $\bar{c}$ of $c$ in $X$ intersect $X_\vbf$ transversally. In a neighbourhood of an intersection point $X_\vbf \cap \bar{c}$, the restriction of the singular form looks like a differential form on a ball $B_\delta(0) \subseteq \R^N$ with a singularity at most $1/\lVert x \rVert^{N-1}$ at the origin. This explains that the form is integrable over transverse cycles, which is proved in the following proposition.

\begin{thm} \label{proplim2} Let $c \colon \Delta \longrightarrow S_\Gamma$ be a smooth $N-1$ chain that intersects $S_f(\phi)$ transversally. Then the integral of $\Phi(z_1,f,\phi)$ over $c$ converges, and we have
\begin{align}
\int_{c}\Phi(z_1,f,\phi) & = \lim_{T\rightarrow \8} \bigg( \int_{F_T} \int_{c}E_{\psi}(z_1,\tau,\phi)f(\tau) \dxdy -\log(T) \int_cQ_\psi(z_1,f,\phi)  \bigg).
\end{align}
\end{thm}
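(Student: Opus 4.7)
The plan is to mimic the proof of Theorem \ref{maintheorem1} while keeping the chain $c$ as an extra integration variable. Starting from Proposition \ref{proplim}, the strategy is to write
\begin{align}
\int_c \Phi(z_1,f,\phi) = \lim_{T \to \infty} \bigg( \int_c \int_{F_T} E_\psi(z_1,\tau,\phi) f(\tau) \tfrac{dxdy}{y^2} - \log(T) \int_c Q_\psi(z_1,f,\phi) \bigg),
\end{align}
and then establish (i) the finiteness of the left-hand side, and (ii) the swap of $\int_c$ with $\lim_T$. Step (ii) follows from (i) by Fubini together with dominated convergence; hence the essential content is the absolute integrability of $\Phi(z_1,f,\phi)$ over $c$.

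To prove integrability, I would reproduce the decomposition used in Theorem \ref{maintheorem1}. Split $F_T = F_1 \cup \bigsqcup_\gamma \gamma(U_1 \smallsetminus U_T)$, split $f \vert_{\gamma_r} = f^+ + f^-$ at each cusp $r$, and insert the Fourier expansion \eqref{fourier Epsi p} of $E_\psi \vert_{\gamma_r}$. Over $F_1$ the form $E_\psi f$ is bounded and $c$ is compact, so that contribution is trivial. The $f^-$-contribution decays rapidly in $y$. The constant Fourier mode $a^+_r(f,0) y e_\psi(z_1,\phi_r)$ is exactly the source of the $\log T$ regularization, as in \eqref{constant term CT}. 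For each $\vbf \in V_\reg \cap \supp(\phi_r)$ with $S_\vbf \cap c = \emptyset$ and $a^+_r(f,-Q(\vbf)) \neq 0$, Lemmas \ref{lemmapolynomial} and \ref{lemAU} already give exponential decay in $y$ uniformly in $z_1 \in c$, so these contributions sum absolutely.

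The crux is the finitely many \emph{bad} vectors: those $\vbf \in V_\reg \cap \supp(\phi_r)$ with $a^+_r(f,-Q(\vbf)) \neq 0$ for which $c$ meets $S_{[\vbf]}$. By the transversality hypothesis and the dimension count $\dim c + \dim S_{[\vbf]} = \dim S_\Gamma$, each such intersection is a finite set of points; near such a point $p$ one may pick local coordinates $(t_1,\dots,t_{N-1})$ on $c$ in which $R(z_1,\vbf)$ has an isolated nondegenerate zero, so $R(z_1(t),\vbf) \geq C\lVert t \rVert^2$. The required estimate is
\begin{align}
\int_{c} \lvert b_\psi(z_1,\sqrt{y}\vbf) \rvert \leq C y^{-\frac{1}{2}}, \qquad y \geq 1,
\end{align}
which makes the $y$-integral $\int_1^\infty (\cdots) \frac{dy}{y}$ convergent. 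To obtain it, I would exploit that the explicit formula \eqref{formalpha} for $\alpha^0$ contains the linear factor $\langle g^{-1}v - g^t w, e_l \rangle$ that vanishes on $X_\vbf$. A Laplace-type analysis of the $u$-integral defining $b_\psi$, localized near the critical $u_0$ with $u_0^2 = \lVert g_1^{-1}v \rVert/\lVert g_1^t w \rVert$, then refines Lemma \ref{lemmapolynomial} to a bound of the shape
\begin{align}
\lvert b_\psi(z_1,\sqrt{y}\vbf) \rvert \leq C y^{\frac{N-1}{2}} \sqrt{R(z_1,\vbf) + y^{-1}} \, e^{-2\pi y R(z_1,\vbf)}
\end{align}
in a neighbourhood of $S_\vbf$. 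Integrating in polar coordinates on $c$ near $p$ with $R$ comparable to $\lVert t \rVert^2$ then yields the desired $y^{-1/2}$ bound.

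The main obstacle is precisely this refined estimate: the raw Gaussian bound of Lemma \ref{lemmapolynomial}, combined with quadratic vanishing of $R$ in the $N-1$ directions of $c$ transverse to $S_\vbf$, only gives $\int_c \lvert b_\psi \rvert$ of order $y^{1/2}$, which is not integrable against $\frac{dy}{y}$. The improvement by a factor of $y^{-1}$ must come from the transgression structure — the vanishing of $\alpha^0$ on $X_\vbf$ — which, after the $du/u$ integration, contributes the extra $\sqrt{R + y^{-1}}$ factor. Once this local estimate is in hand, summing the uniform bounds over the finitely many bad $\vbf$ (and finitely many cusps) together with the absolutely convergent parts proves the finiteness of $\int_c \Phi$; Fubini applied to the truncated integral followed by dominated convergence in $T$ then yields the claimed limit identity.
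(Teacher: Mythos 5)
Your overall reduction coincides with the paper's (split off $F_1$, treat each cusp via $\phi_r$, let the constant mode produce the $\log T$ term, dispose of $f^-$ and of the vectors whose $S_\vbf$ stays away from $c$ by Lemmas \ref{lemmapolynomial} and \ref{lemAU}), but at the crux you take a genuinely different route. The paper does \emph{not} prove any refined pointwise bound on $b_\psi$: it performs the $y$-integration first, so that for each of the finitely many bad vectors the integrand becomes $\beta_{\frac{d+1}{2}}\bigl(\pi\lVert \xi^-_\vbf(T,u)\rVert^2\bigr)\,\vert\xi^-_\vbf(T,u)_l\vert\cdot(\text{bounded})$ on the $N$-dimensional domain $\Delta\times\R_{>0}$ (the fiber variable $u$ is kept as an integration variable rather than integrated out), and then uses transversality of $c$ together with Lemma \ref{rank lemma} to show that $\xi^-_\vbf=f^-_\vbf\circ\bar c$ is a local diffeomorphism near each singular point; the change of variables $x=\xi^-_\vbf(T,u)$ reduces everything to $\int_{B_\delta(0)}\lVert x\rVert^{-d}dx<\infty$ with $d\le N-1<N$. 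Note that the linear factor from \eqref{formalpha} plays the same role in both arguments: in the paper it lowers $\lVert x\rVert^{-(d+1)}$ (borderline divergent when $d=N-1$) to $\lVert x\rVert^{-d}$; in your version it is what should upgrade the naive $O(y^{1/2})$ to $O(y^{-1/2})$.

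Your route can be made to work, but the two statements you assert are exactly where the content lies, and neither is free. First, the refined Laplace estimate $\vert b_\psi(z_1,\sqrt{y}\vbf)\vert\le C\,y^{\frac{N-1}{2}}\sqrt{R(z_1,\vbf)+y^{-1}}\,e^{-2\pi yR(z_1,\vbf)}$, which you yourself flag as the main obstacle, has to be proved with uniformity in $z_1$ near $S_\vbf$; a localization of the $u$-integral at $u_0^2=\lVert g_1^{-1}v\rVert/\lVert g_1^tw\rVert$ does give it (the Hermite factor contributes $y^{\frac{N-1}{2}}$, the Gaussian width $y^{-1/2}$, and the factor $\sqrt{y}\,\langle u^{-1}g_1^{-1}v-ug_1^tw,e_l\rangle$ contributes $\sqrt{yR}+O(1)$), but this computation is not in your write-up. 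Second, the lower bound $R(z_1(t),\vbf)\ge C\lVert t\rVert^2$ near a transverse intersection point does not follow from transversality alone: one needs that $R$ vanishes exactly to second order transversally to $S_\vbf$ with nondegenerate transverse Hessian, which is essentially the rank-$N$ statement of Lemma \ref{rank lemma} (equivalently, that $S_\vbf$ is cut out by a map whose transverse differential is surjective, so that $R\asymp$ squared distance to $S_\vbf$ via the Cauchy--Schwarz defect). So either supply these two estimates, or switch at this point to the paper's device of integrating $y$ first and changing variables by $\xi^-_\vbf$, which avoids the Laplace refinement altogether. The final interchange of $\int_c$ with the $T$-limit is, as you say, immediate from Fubini once absolute convergence is in hand, and that part matches the paper.
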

\begin{proof} Let $\Fcal$ be a fundamental domain for $\Gamma$ is $S$. Let $\pi_\Gamma \colon S \rightarrow S_\Gamma$ be the projection. By subdividing $\Delta$ if necessary, we can write $c$ as a finite sum of cycles $c_i \colon \Delta \longrightarrow S_\Gamma$ such that each $c_i$ is a smooth embedding and the image of $c_i(\Delta)$ is contained in an open subset $U_i \subset S_\Gamma$ that is small enough to be diffeomorphic to $\pi_\Gamma^{-1}(U_i) \subset \Fcal$ in the fundamental domain. The integral over $c$ is then
\begin{align}
\int_{c}\Phi(z_1,f,\phi)=\sum_i \int_{c_i} \Phi(z_1,f,\phi),
\end{align}
where we view $c_i$ as a chain in $\Fcal \subset S$, that intersects each submanifold $S_\vbf \subset \Stil_f(\phi)=\pi^{-1}(S_f(\phi))$ transversally. Let us show that these integrals converge (we will write $c$ instead of $c_i$).

We proceed as in Theorem \ref{maintheorem1} and write
\begin{align}
\Phi(z_1,f,\phi)=\sum_{\gamma \in \Gamma' \backslash \SL_2(\Z)} \widetilde{\Phi}_\gamma(z_1,f,\phi)+\int_{F_1}E_{\psi}(z_1,\tau,\phi)f(\tau) \dxdy
\end{align}
where
\begin{align}
\widetilde{\Phi}_\gamma(z_1,f,\phi) \coloneqq \CT \left \{ \lim_{T\rightarrow \8} \int_{U_1 \smallsetminus U_T} E_{\psi}(z_1,\tau,\phi) \vert_\gamma \left ( f \vert_\gamma \right )(\tau) y^{-\rho} \dxdy \right \}.
\end{align}
The integral over $F_1$ defines a smooth form, so integrable over $c$ and we can interchange the integrals:
\begin{align}
\int_{c}\int_{F_1}E_{\psi}(z_1,\tau,\phi)f(\tau) \dxdy=\int_{F_1}\int_{c} E_{\psi}(z_1,\tau,\phi)f(\tau) \dxdy.
\end{align}Hence, it is enough  to show that $\widetilde{\Phi}_\gamma(z_1,f,\phi)$ is integrable.  We can take $\gamma=1$ without loss of generality. As in Proposition \ref{proplim}, we have \begin{align}
\widetilde{\Phi}_1(z_1,f,\phi) & = \lim_{T\rightarrow \8} \bigg( \int_{U_1 \smallsetminus U_T} E_{\psi}(z_1,\tau,\phi)f(\tau) \dxdy -a^+(f,0)e_\psi(z_1,\phi)\log(T) \bigg).
\end{align}
The integral against $f^-$ is also a smooth form since $f^-$ is rapidly decreasing and $E_\psi$ moderately growing as $y\rightarrow \8$. Thus is enough to show that the integral
\begin{align}
\int_{c} \lim_{T\rightarrow \8} \bigg( \int_1^T\int_{-\frac{1}{2}}^{\frac{1}{2}} E_{\psi}(z_1,\tau,\phi)f^+(\tau) \dxdy -a^+(f,0)e_\psi(z_1,\phi)\log(T) \bigg)
\end{align}
against $f^+$ converges and we can exchange the two integrals.  By plugging-in the Fourier expansion and recalling that $ye_\psi(z_1,\phi)$ is the constant term of $E_{\psi}(z_1,\tau,\phi)$ (see \eqref{fourier Epsi}), we find that the latter integral is equal to
\begin{align} \label{convergence}
\int_{c} \int_1^\8 \sum_{\vbf \in V_\reg} \phi(\vbf)b_\psi(z_1,\sqrt{y}\vbf)a^+(f,-Q(\vbf)) \frac{dy}{y}.
\end{align} By Fubini's theorem, we have
\begin{align}
\int_{c} \int_1^\8 \sum_{\vbf \in V_\reg} \phi(\vbf)b_\psi(z_1,\sqrt{y}\vbf)a^+(f,-Q(\vbf)) \frac{dy}{y} &= \int_1^\8 \int_{c} \sum_{\vbf \in V_\reg} \phi(\vbf)b_\psi(z_1,\sqrt{y}\vbf)a^+(f,-Q(\vbf)) \frac{dy}{y}
\end{align}
where the integral on the left converges absolutely if and only if the integral on the right converges absolutely. Assuming the convergence, it follows from the Fourier expansion
of $\int_{c}E_\psi(z_1,\tau,\phi)$ that
\begin{align}
& \int_1^\8 \int_{c} \sum_{\vbf \in V_\reg} \phi(\vbf) b_\psi(z_1,\sqrt{y}\vbf)a^+(f,-Q(\vbf)) \frac{dy}{y} \\
& \hspace{3cm} = \lim_{T\rightarrow \8} \left ( \int_{U_1 \smallsetminus U_T} \int_{c} E_{\psi}(z_1,\tau,\phi)f^+(\tau) \dxdy -a^+(f,0)\int_{c}e_\psi(z_1,\phi)\log(T) \right ).
\end{align}
This shows that the integral equals the right handside of the proposition.

Finally, we want to show the absolute convergence \eqref{convergence}  by splitting $V_\reg \cap \supp(\phi) = A_\epsilon(U,f) \sqcup B_\epsilon(U,f)$ as in the proof of Theorem \ref{maintheorem1}, where
\begin{align}
A_\epsilon(U,f) \coloneqq \left \{ \vbf \in V_\reg \cap \supp(\phi) \ \vert \ a^+(f,-Q(\vbf)) \neq 0 \ \textrm{and} \ R(z_1,\vbf) < \epsilon \ \textrm{for some} \ z_1 \in U \right \}
\end{align}
is finite and $U$ is a relatively compact open set containing the chain $c$. By Lemma \ref{lemAU}, in particular \eqref{step in Lemma}, if $Q(\vbf)<0$ then $R(z_1,\vbf)$ is bounded by below on $U$. By taking $\epsilon$ small enough, we can assume that $A_\epsilon(U,f)$ only consists of positive regular vectors. For the sum over $B_\epsilon(U,f)$, the absolute convergence follows from the fact  $b(z_1, \vbf)$ is rapidly decreasing as $\lVert \vbf \rVert \rightarrow \8$, uniformly in $z_1 \in U$.

For the (finite) sum over $A_\epsilon(U,f)$ the absolute convergence \eqref{convergence} follows from convergence of the right-handside of
\begin{align} & \int_{c} \int_1^\8 \sum_{\vbf \in A_\epsilon(U,f)} \vert \phi(\vbf)b_\psi(z_1,\sqrt{y}\vbf) a^+(f,-Q(\vbf))\vert \frac{du}{u}\frac{dy}{y} \\
& \hspace{5cm} \leq M \sum_{\vbf \in A_\epsilon(U,f)} \vert a^+(f,-Q(\vbf))\vert \int_{\Delta \times \R_{>0}}  \int_1^\8 \vert c^\ast \alpha^0(z,\sqrt{y}\vbf) \vert \frac{du}{u}\frac{dy}{y},
\end{align}
where $M$ is such that $\vert \phi(\vbf) \vert \leq M$.

 Thus, it is left to show the convergence of the inner integral for a fixed vector $\vbf \in A_\epsilon(U,f)$. We extend the chain  $c \colon \Delta \longrightarrow S$ to a map
 \begin{align}
\bar{c} \colon \Delta \times \R_{>0} \longrightarrow X \qquad \bar{c}(T,u)=u^2c(T).
\end{align}We can view the form inside of the integral as pullback of $ \alpha^0(z,\sqrt{y}\vbf) \frac{du}{u} \in \Omega^{N}(X)$ by $\bar{c}$. To fix some coordinates on $X$, let $B \subset G$ be the Borel subgroup of upper triangular matrices. We have an Iwasawa decomposition $G=NAK$ where $N \subset B$ are the unipotent matrices and $A \simeq \R_{>0}^N$ are diagonal matrices. Let $P \coloneqq NA \subset B$. Since $P \cap K=\{\id\}$ we have a diffeomorphism $P \simeq X$ mapping $g$ to $gg^t$.  Similarly, we have a diffeomorphism $S \simeq P^1 \coloneqq P \cap \SL_N(\R)$. We can view $c$ as a chain in $P^1$, and we will write $g_T \in P$ for the lift of $c(T)$, that satisfies $c(T)=g_Tg_T^t \in S$. We want to compute the integral of the pullback along the map
\begin{align}
\bar{c} \colon \Delta \times \R_{>0} \longrightarrow P \qquad \bar{c}(T,u)=ug_T.
\end{align}
Recall from \eqref{sectionsv} that $X_\vbf$ is the zero locus of the section
\begin{align}
s_\vbf \colon X \longrightarrow G \times_K \R^N, \qquad s_\vbf(g)=\left [g,f_\vbf^-(g) \right ]
\end{align}
where we define
\begin{align}
f_\vbf^\pm (g) = \frac{g^{-1}v\pm g^t w}{2}.
\end{align}
The composition of the two maps is
\begin{align} \label{local diff}
\xi_\vbf^\pm \coloneqq f_\vbf^\pm \circ \bar{c} \colon \Delta \times \R_{>0} \longrightarrow \R^N, \qquad \xi_\vbf^\pm(T,u)=\frac{u^{-1}g_T^{-1}v \pm ug_T^{t}w}{2}.
\end{align}
After identifying $X$ with $P$, we can view $X_\vbf \subset P$ as the zero locus of the smooth map $f_\vbf^-  \colon P \longrightarrow \R^N$.
If $\vbf \in A_\epsilon(U,f)$, then $\vbf$ is regular and positive (for $\epsilon$ small enough).  By Lemma \ref{rank lemma} below, the map $f^-_\vbf$ has rank $N$. Moreover, the submanifold $S_\vbf$ is the image of $X_\vbf$ under the projection $\pi\colon X \rightarrow S$. Since the restriction of the projection to $X_\vbf$ is a diffeomorphism onto $S_\vbf$ and $c$ is transversal to $S_\vbf$, we deduce that the image of $\bar{c} \colon \Delta \times \R_{>0} \longrightarrow P$ is transversal to $X_\vbf$. This means that at point $(T,u) \in \Delta \times \R_{>0}$ such that $p=\bar{c}(T,u) \in X_\vbf$, we have
\begin{align}
T_pX \simeq T_pX_\vbf \oplus \im(d_{(T,u)}\bar{c}) \simeq \ker(d_pf_\vbf^-) \oplus \im(d_{(T,u)}\bar{c}).
\end{align}It implies that the composition $\xi_\vbf^- = f_\vbf^- \circ \bar{c}$ has rank $N$ at any point $(T,u) \in \bar{c}^{-1}(X_\vbf) \cap (\Delta \times \R_{>0})$ and is a local diffeomorphism.

Recall that the form $\alpha^0(z,\vbf)$ was given in \eqref{formalpha} by the formula
\begin{align} 
\alpha^0(z,\vbf)& =-\frac{1}{2^{N}\pi^{\frac{N-1}{2}}} \exp \left (-\pi \lVert g^{-1}v - g^t w \rVert^2 \right )  \\
& \hspace{1cm} \times\sum_{l=1}^N (-1)^{l-1} \langle g^{-1}v-g^t w, e_l \rangle  \sum_{\vert \uud \vert=N-1} H_{\uud}(\sqrt{\pi}(g^{-1}v+g^t w)) \otimes \lambda_{I_l}(\uud).
\end{align}
The pullback of $\alpha^0(z,\vbf)du/u$ by $\bar{c}$ is of the form
\begin{align}
\bar{c}^\ast \left ( \alpha^0(z,\vbf) \frac{du}{u} \right ) \propto \sum_{l=1}^N \exp \left (-\pi \lVert \xi^-(T,u) \rVert^2 \right ) \xi^-_\vbf(T,u)_l  Q_l(\xi^+_\vbf(T,u)_1,\dots,\xi^+_\vbf(T,u)_N)r(T)  \frac{dTdu}{Tu},
\end{align}
up to some constants, where $Q_l$ is some polynomial of degree $N-1$ and $r(T)$ is some smooth function on $\Delta$ (in particular it is bounded). By splitting the polynomial $Q_l$ into polynomials $Q_{l,d}$ of homogeneous degree $0 \leq d \leq N-1$, the convergence of 
\begin{align}
\int_{\Delta \times \R_{>0}}  \int_1^\8   \left \vert \bar{c}^\ast \left ( \alpha^0(z,\sqrt{y}\vbf) \frac{du}{u}  \right ) \right \vert\frac{dy}{y}
\end{align} follows from the convergence of the integral
\begin{align}
& \int_{\Delta \times \R_{>0}} \int_1^\8 \sqrt{y}^{d+1} \exp \left (-\pi y \lVert \xi_\vbf^-(T,u) \rVert^2 \right ) \vert \xi^-_\vbf(T,u)_l \vert  \vert Q_{l,d}(\xi^+_\vbf(T,u)_1,\dots,\xi^+_\vbf(T,u)_N) \vert \frac{dTdudy}{uy} \\
&= \int_{\Delta \times \R_{>0}} \beta_{\frac{d+1}{2}} \left (\pi \lVert \xi^-_\vbf(T,u) \rVert^2 \right ) \vert \xi^-_\vbf(T,u)_l \vert  \vert Q_{l,d}(\xi^+_\vbf(T,u)_1,\dots,\xi^+_\vbf(T,u)_N) \vert \frac{dTdu}{u}
\end{align}
for each $l$, where 
\begin{align}
\beta_{s}(t)=\int_1^\8e^{-ty}y^{s-1}dy=\frac{1}{t^s}\int_t^\8e^{-y}y^{s-1}dy.
\end{align} 
The function
\begin{align}
\beta_{\frac{d+1}{2}} \left (\pi \lVert \xi^-(T,u) \rVert^2 \right )
\end{align}
has singularities at the points $(T,u)$ where
\begin{align}
\xi^-_\vbf(T,u)_1=\dots=\xi^-_\vbf(T,u)_N=0.
\end{align}
Equivalently, the are also the points $(T,u)$ where $v=u^2g_Tg_T^tw$. Since  $T$ lies in the compact set $\Delta$, the determinant $u$ must be bounded and the singularities are all contained in a compact $\Delta \times [a,b]$.

Let us show that the integral converges outside of this compact. Note that we have \cite[Lemma.~3.8]{kudlaintegral}
\begin{align}
\beta_{\frac{d+1}{2}}(r)=O(r^{-\frac{d+1}{2}}) \ \textrm{as} \ r \rightarrow 0, \\
\beta_{\frac{d+1}{2}}(r)=O(e^{-r}) \ \textrm{as} \ r \rightarrow \8.
\end{align}Since $(v,w)$ is regular and $g_T$ is invertible, there is always at least one entry of  $g_T^{-1}v$ and $g_Tw$ that is nonzero. Thus the $\lVert \xi^-(T,u) \rVert^2$ goes to $\8$ as $u$ goes to $0$ and $\8$, and so $\beta_{\frac{d+1}{2}} \left (\pi \lVert \xi^-(T,u) \rVert^2 \right )$ is rapidly decreasing.

Now let us show the convergence in $\Delta \times [a,b]$. Since the polynomials $Q_{d,l}$ are bounded on the compact set $\Delta \times [a,b]$, it is left to show the convergence of
 \begin{align}
\int_{\Delta \times [a,b]} \beta_{\frac{d+1}{2}} \left (\pi \lVert \xi^-_\vbf(T,u) \rVert^2 \right ) \vert \xi^-_\vbf(T,u)_l \vert  \frac{dTdu}{u} \leq \frac{1}{a}\int_{\Delta \times [a,b]}  \frac{\vert \xi^-_\vbf(T,u)_l \vert}{\lVert \xi^-_\vbf(T,u) \rVert^{d+1} } dTdu,
\end{align}
where the set $\Delta \times [a,b]$ contains all the singularities. Around any singularity $p \in \Delta \times [a,b]$, the map $\xi_\vbf^-$ defined in \eqref{local diff} is a local diffeomorphism. Thus, after the change of variables $x_i=\xi_\vbf^-(T,u)_i$ in a small neighbourhood of $(T,u)$, the integral is
\begin{align}
\int_{B_\delta(0)}  \frac{\vert x_l \vert}{\lVert x \rVert^{d+1} }dx_1 \cdots dx_N \leq \int_{B_\delta(0)}  \frac{1}{\lVert x \rVert^{d} }dx_1 \cdots dx_N
\end{align}
in a small neighbourhood of $B_\delta(0) \subset \R^N$ of $0$. Since $0 \leq d \leq N-1$, the integral on the right converges to $\frac{\vol(S^{N-1})}{N-d}\delta^{N-d}$, and thus the integral converges.
\end{proof}

\begin{rmk} One can also show that if $\omega \in \Omega^k_c(S/\Gamma)$ is a compactly supported form of degree $k=\frac{N^2+N}{2}$, then
\begin{align}
\int_{S}\Phi(z_1,f,\phi) \wedge \omega
\end{align} converges. 
\end{rmk}

\begin{lem} \label{rank lemma} If $\vbf$ is a regular vector with $Q(\vbf) > 0$, then $f^-_\vbf$ has rank $N$.
\end{lem}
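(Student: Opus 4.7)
The plan is to linearize $f^-_\vbf \colon G \to \R^N$ directly at an arbitrary $g \in G$. Identify $T_g G$ with $\g = \Mat_N(\R)$ by left translation, so that a tangent vector $gC \in T_g G$ is the velocity at $t=0$ of the curve $g e^{tC}$. Using $\frac{d}{dt}|_0 e^{-tC} = -C$ and $\frac{d}{dt}|_0 e^{tC^t} = C^t$, differentiating $f^-_\vbf(g e^{tC}) = \tfrac{1}{2}(e^{-tC}g^{-1}v - e^{tC^t}g^t w)$ at $t=0$ gives
\begin{align}
df^-_\vbf|_g(gC) = -\tfrac{1}{2}\bigl(C g^{-1} v + C^t g^t w\bigr).
\end{align}

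Next, I would decompose $C = C_s + C_a$ into symmetric and antisymmetric parts, corresponding to the Cartan decomposition $\g = \p \oplus \kfrak$ of the Lie algebra of $K = \SO(N)$. Regrouping, this differential becomes
\begin{align}
df^-_\vbf|_g(gC) = -C_s\, y - C_a\, f^-_\vbf(g), \qquad y := \tfrac{1}{2}\bigl(g^{-1} v + g^t w\bigr).
\end{align}
The antisymmetric piece reflects the $K$-equivariance of the section $s_\vbf$; at any zero of $f^-_\vbf$ it disappears and the differential descends to a well-defined map $T_{gK} X \simeq \p \to \R^N$, $C_s \mapsto -C_s y$. In any case, the image of $df^-_\vbf|_g$ contains the whole subspace $\{C_s y : C_s \in \p\}$, so it suffices to show that this latter subspace is all of $\R^N$.

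The key observation is that $y \neq 0$ whenever $Q(\vbf) > 0$. Using $(g^{-1})^t g^t = I_N$ one has $(g^{-1}v)^t(g^t w) = v^t w = Q(\vbf) > 0$, whereas $g^{-1}v = -g^t w$ would force this inner product to equal $-\lVert g^t w \rVert^2 \leq 0$, a contradiction. Surjectivity of the linear map $\p \to \R^N$, $C_s \mapsto C_s y$, when $y \neq 0$ is then a one-line linear-algebra statement: in an orthonormal basis whose first vector is $y/\lVert y \rVert$, the first row of a symmetric matrix $C_s$ may be prescribed freely, which prescribes $C_s y$ arbitrarily. This shows $df^-_\vbf|_g$ has rank $N$ everywhere on $G$, which is slightly stronger than what is needed in the proof of Theorem~\ref{proplim2} (where the submersion property is only invoked at points of $X_\vbf$).

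There is no serious obstacle here; the computation is elementary once one sets up the identification $T_g G \simeq \g$ and carefully keeps track of which part of $C$ is symmetric. The only content beyond bookkeeping is the nonvanishing of $y$, which is a direct consequence of the hypothesis $Q(\vbf) > 0$ (the regularity assumption on $\vbf$ is in fact automatic, as $Q(\vbf) = v^t w > 0$ already forces both $v$ and $w$ to be nonzero).
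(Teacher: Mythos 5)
Your proof is correct, and it takes a genuinely different route from the paper at the key step. Both arguments start from the same linearization $d_gf^-_\vbf(gC)=-\tfrac12(Cg^{-1}v+C^tg^tw)$, but the paper then proves surjectivity of $A\mapsto Av'+A^tw'$ (with $v'=g^{-1}v$, $w'=g^tw$) by dualizing: identifying $\Mat_N(\R)^\ast$ via the trace pairing and showing the adjoint map $x\mapsto v'x^t+x(w')^t$ is injective, using regularity (to multiply by $w\neq 0$) and $Q(\vbf)>0$ at the last step. You instead split $C$ along the Cartan decomposition $\g=\p\oplus\kfrak$, observe that the symmetric part alone acts by $C_s\mapsto -C_sy$ with $y=\tfrac12(g^{-1}v+g^tw)$, show $y\neq 0$ directly from $Q(\vbf)=(g^{-1}v)^t(g^tw)>0$, and conclude by the elementary fact that $\{C_sy: C_s \ \text{symmetric}\}=\R^N$ for $y\neq 0$. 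What your approach buys: it avoids the duality step, it makes explicit that only $Q(\vbf)>0$ is needed (regularity being automatic), and the observation that the $\kfrak$-part acts by $-C_af^-_\vbf(g)$, hence vanishes at zeros of $f^-_\vbf$, cleanly explains why the submersion property descends from $G$ to $X\simeq P$ at points of $X_\vbf$ — a point the paper passes over silently when it invokes the lemma inside the proof of Theorem~\ref{proplim2}. What the paper's duality argument buys is brevity and no need to single out a distinguished vector $y$; both proofs are complete as written.
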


\begin{proof} We identify the tangent space $T_g\GL_N(\R) \simeq \Mat_{N \times N}(\R)$. The differential at $g$ is
\begin{align}
d_g f_\vbf^- (A)= \left . \frac{d}{dr} \right \vert_{r=0} f_\vbf^- (ge^{rA})=-\frac{1}{2} \left ( Ag^{-1}v+A^tg^tw \right ).
\end{align}
Since $\vbf=(v,w)$ is a regular vector with $Q(\vbf) \geq 0$ if and only if it holds for $\rho_{g^{-1}} \vbf=(g^{-1}v,g^tw)$, it is enough to show that the map \begin{align}
F \colon \Mat_{N \times N}(\R) \longrightarrow \R^N, \quad A \longmapsto Av+A^tw
\end{align}is surjective. This is equivalent to showing that the dual map $ F^\ast \colon (\R^N)^\ast \longrightarrow \Mat_{N \times N}(\R)^\ast$ is injective. We identify $(\R^N)^\ast \simeq \R^N$ via the scalar product $\langle \ , \ \rangle$, so the dual map sends a vector $x \in \R^N$ to $F^\ast(x) \in \Mat_{N \times N}(\R)^\ast$ defined by 
\begin{align}
F^\ast(x)(A)= \langle x, F(A) \rangle=x^tAv+x^tA^tw=x^tAv+w^tAx=\Tr(A(vx^t+xw^t)).
\end{align}
We identify $ \Mat_{N \times N}(\R)^\ast \simeq \Mat_{N \times N}(\R) $ via the trace pairing $(A,B) \mapsto \Tr(AB)$, so that we can see the dual as a map
\begin{align}
F^\ast \colon \R^N \longrightarrow \Mat_{N \times N}(\R) , \quad x \longmapsto vx^t+xw^t.
\end{align}
To show that the map is injective, suppose that $x$ is such that $vx^t+xw^t=0$. Multiplying by the nonzero vector $w$ (since $\vbf$ is regular) shows that $x= \alpha v$, where $\alpha = -(x^tw) \lVert w \rVert^{-2}$. We have to show that $\alpha=0$. After substituting in the previous equation, we get
\begin{align}
0=vx^t+xw^t=\alpha (\lVert v \rVert^2+ Q(v,w)).
\end{align}
Since $v$ is nonzero and $Q(\vbf)=Q(v,w) > 0$, it follows that $\alpha=0$.
\end{proof}

\subsection{Adjointness formula}

When the lift \eqref{theta lift at zero} vanishes, we want to consider the derivative
\begin{align}
 Z_{N-1}(S_\Gamma;\Z) & \longrightarrow \Mtil_{N}(\Gamma') \\
 c & \longmapsto E'_\varphi(c,\tau,\phi) \coloneqq \dds \int_c E_\varphi(z_1,\tau,\phi,s).
\end{align}
From the Fourier expansion \eqref{fourier Ephi} we find that
\begin{align}
E'_\varphi(c,\tau,\phi)=A(c,\phi)+\log(y)B(c,\phi)+\sum_{n \in \Q} C_n(c,\phi,y)  e(n\tau)
\end{align}
where
\begin{align}
A(c,\phi) & = \dds \int_c \left ( e_{\varphi,1}(z_1,\phi,s) + e_{\varphi,2}(z_1,\phi,s) \right ), \\
B(c,\phi) & =N\int_c \left ( e_{\varphi,1}(z_1,\phi) - e_{\varphi,2}(z_1,\phi) \right ), \\
C_n(c,\phi,y) & = (-1)^{N-1} \sum_{\substack{\vbf \in V_\reg \\ Q(\vbf)=n}} \phi(\vbf) \int_c \int_0^\8 \iotau \varphi^0(z,\sqrt{y}\vbf) \log(u) \frac{du}{u}.
\end{align}
At the other cusps we have
\begin{align}
E'_\varphi(c,\tau,\phi)\vert_{\gamma_r}=A(c,\phi_r)+\log(y)B(c,\phi_r)+\sum_{n \in \Q} C_n(c,\phi_r,y)  e(n\tau).
\end{align}

\begin{prop} \label{lemmaderiv} Let $c \in Z_{N-1}(S_\Gamma)$ be a smooth cycle. Then
\begin{align}
\int_{c}E_{\psi}(z_1,\tau,\phi)= \frac{(-1)^{N-1}}{2N} L_N E'_\varphi(c,\tau,\phi).
\end{align}
\end{prop}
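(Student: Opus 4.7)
The plan is to establish an $s$-dependent refinement of the transgression formula \eqref{transgressionformula2}, namely
\begin{align}
L_N E_\varphi(z_1,\tau,\phi,s) = d_S E_\alpha(z_1,\tau,\phi,s) + (-1)^{N-1}\,2Ns\, E_\psi(z_1,\tau,\phi,s),
\end{align}
and to then read off the claim as its linear-in-$s$ part after pairing with $c$. The starting point is $L_N\Theta_\varphi = d_X\Theta_\alpha$ on $X$. Multiplying by the regularizing weight $u^{-2Ns}$ and using the Leibniz rule together with $\Theta_\alpha\wedge du = u\,\Theta_\psi$ (a rearrangement of \eqref{definition of psi}), one obtains
\begin{align}
u^{-2Ns}\,d_X\Theta_\alpha = d_X\bigl(\Theta_\alpha\,u^{-2Ns}\bigr) + (-1)^{N-1}\,2Ns\, u^{-2Ns}\,\Theta_\psi,
\end{align}
the second term coming precisely from the derivative of the regularizing weight.

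Next I would apply $\pi_\ast$ to both sides. Since $L_N$ is a $\tau$-differential operator only, it commutes with fiber integration, so the left-hand side becomes $L_N E_\varphi(z_1,\tau,\phi,s)$. The projection formula $\pi_\ast d_X = d_S\pi_\ast$ turns the first term on the right into $d_S E_\alpha(z_1,\tau,\phi,s)$; the boundary contribution $[u^{-2Ns}\Theta_\alpha]_0^\infty$ arising from the underlying integration by parts in $u$ vanishes thanks to the exponential decay in Proposition \ref{rapid decrease}, which dominates the polynomial factor $u^{-2Ns}$. The second term becomes $(-1)^{N-1}2Ns\,E_\psi(z_1,\tau,\phi,s)$, yielding the displayed refinement; at $s=0$ it reduces to \eqref{transgressionformula2}, so the new content is the linear $O(s)$ term, which is exactly where $E_\psi$ appears.

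To conclude, I integrate over the closed cycle $c$: Stokes eliminates the $d_S$-exact term, leaving
\begin{align}
L_N \int_c E_\varphi(z_1,\tau,\phi,s) = (-1)^{N-1}\,2Ns\int_c E_\psi(z_1,\tau,\phi,s).
\end{align}
Since $L_N$ and $\partial/\partial s$ act on independent variables and $\int_c E_\varphi(s)$ is smooth in $\tau$ and holomorphic in $s$ near $0$, they commute. Differentiating at $s=0$ gives, via the product rule, $L_N E'_\varphi(c,\tau,\phi) = (-1)^{N-1}2N\int_c E_\psi(z_1,\tau,\phi)$; the competing term proportional to $s\,\partial_s\int_c E_\psi$ is killed by the explicit factor of $s$. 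Dividing by $(-1)^{N-1}2N$ and using $1/(-1)^{N-1}=(-1)^{N-1}$ produces the stated formula. The main obstacle is bookkeeping the two interchanges $L_N \leftrightarrow \pi_\ast$ and $d_X \leftrightarrow \pi_\ast$ in the middle step, together with the vanishing of the boundary term in the integration by parts in $u$; all of these are routine given the rapid decay of $\Theta_\alpha$ in $u$ furnished by Proposition \ref{rapid decrease}, uniform on compact subsets of $S\times\HH$.
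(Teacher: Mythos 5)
Your proof is correct and is essentially the paper's argument: both rest on the transgression formula $d_X\Theta_{\alpha}=L_N\Theta_{\varphi}$, the projection formula $d_S\pi_\ast=\pi_\ast d_X$ justified by the rapid decay in $u$ from Proposition \ref{rapid decrease}, Stokes' theorem over the closed cycle $c$, and differentiation in $s$ at $s=0$ with the same interchanges of $L_N$, $\pi_\ast$, $\int_c$ and $\partial_s$. The only cosmetic difference is that you keep the full $s$-family identity $L_N E_\varphi(s)=d_S E_\alpha(s)+(-1)^{N-1}2Ns\,E_\psi(s)$ and differentiate at the very end, whereas the paper differentiates the weight $u^{-2Ns}$ first and works with the $\log(u)$-weighted form $\Theta_{\alpha}\log(u)$.
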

\begin{proof} 
Recall from \eqref{transgressionformula} that $d\Theta_{\alpha}(z,\tau,\phi)=L_N \Theta_{\varphi}(z,\tau,\phi)$. We have \begin{align}
d_X \biggl ( \Theta_{\alpha}(z,\tau,\phi) \log(u)\biggr )=L_N\Theta_{\varphi}(z,\tau,\phi)\log(u)+(-1)^{N-1}\Theta_{\alpha}(z,\tau,\phi)\frac{du}{u}
\end{align}
and
\begin{align}
\dds L_N \left (\Theta_{\varphi}(z,\tau,\phi)u^{-2Ns}\right )=-2NL_N\Theta_{\varphi}(z,\tau,\phi)\log(u).
\end{align}
For a form $\eta$ on $X$ that vanishes as $u$ goes to $0$ and $\8$ (and is integrable along the fiber of $X \rightarrow S$) we have $d_S\pi_\ast(\eta)=\pi_\ast(d_X\eta)$ (see \cite[Lemma.~3.9]{rbrsln}). By Proposition \ref{rapid decrease} the form $\Theta_{\alpha}(z,\tau,\phi)$ is rapidly decreasing as $u \rightarrow 0$ and $\8$, and so the same holds for $\Theta_{\alpha}(z,\tau,\phi)\log(u)$.
Thus, we find that
\begin{align}
d_S\pi_\ast \left ( \Theta_{\alpha}(z,\tau,\phi) \log(u) \right )=-\frac{1}{2N}\dds L_N \pi_\ast \left (\Theta_{\varphi}(z,\tau,\phi)u^{-2Ns}\right )+(-1)^{N-1}E_{\psi}(z_1,\tau,\phi).
\end{align}
After integrating over the cycle $c$ we find that
\begin{align}
0=-\frac{1}{2N}L_N \dds \int_c E_{\varphi}(z_1,\tau,\phi,s)+(-1)^{N-1}\int_{c}E_{\psi}(z_1,\tau,\phi).
\end{align}
Note that the integral and the sum appearing in the definition of $E_\psi(z_1,\tau,\phi,s)$ and $E_\varphi(z_1,\tau,\phi,s)$ converge uniformly for $(z_1,\tau,s)$ in a compact set of $S \times \HH \times \C$, (as well as their derivatives) so that the exchange of the differential operators with the sum and integral is justified.
\end{proof}

\begin{thm}\label{main formula 1}Let $f \in H_{2-N}(\Gamma')$ be a harmonic weak Maass form and $g=\xi_{2-N}(f) \in S_N(\Gamma')$ its image under the $\xi$-operator. For any smooth cycle $c$ transverse to $S_f(\phi)$ we have 
\begin{align}
2N (-1)^{N-1}\int_{c}\Phi(z_1,f,\phi) =  \langle E'_\varphi(c,\phi),g \rangle+ \sum_{ r \in \Gamma' \backslash \PP^1(\Q)}w_r \kappa_r(c,f,\phi)
\end{align}
where
\begin{align}
\kappa_r(c,f,\phi) & \coloneqq A(c,\phi_r)a^+_r(f,0)  + \lim_{T \rightarrow \8}\sum_{n \in \Q_{>0}}C_n(c,\phi_r,T)  a^+_r(f,-n).
\end{align}
\end{thm}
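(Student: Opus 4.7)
The plan is to combine Theorem~\ref{proplim2} with Proposition~\ref{lemmaderiv} and then transfer the lowering operator $L_N$ from $E'_\varphi(c,\tau,\phi)$ onto $f(\tau)$ via Stokes' theorem, producing the Petersson pairing with $g=\xi_{2-N}(f)$ together with a sum of cusp contributions that, after cancellation of logarithmic divergences, become the $\kappa_r$ corrections. Set $\mathcal{F}(\tau)\coloneqq E'_\varphi(c,\tau,\phi)$. By Fubini and Proposition~\ref{lemmaderiv}, $\int_c E_\psi = \frac{(-1)^{N-1}}{2N}L_N\mathcal{F}$, so substituting into Theorem~\ref{proplim2} and multiplying by $2N(-1)^{N-1}$ yields
\begin{align}
2N(-1)^{N-1}\int_c\Phi(z_1,f,\phi) = \lim_{T\to\infty}\left[\int_{F_T} L_N\mathcal{F}\cdot f\,\dxdy - 2N(-1)^{N-1}\log(T)\int_c Q_\psi\right].
\end{align}

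The defining relation $g = y^{-N}\overline{L_N f}$ gives $L_N f = y^N\bar g$, and using $L_N = -2iy^2\bar\partial$ together with $d\tau\wedge d\bar\tau = -2i\,dx\,dy$ one finds the identity of $2$-forms on $\HH$
\begin{align}
d(\mathcal{F}f\,d\tau) = -\bigl(L_N\mathcal{F}\cdot f + \mathcal{F}\cdot L_N f\bigr)\dxdy.
\end{align}
Applying Stokes' theorem on $F_T$ transforms $\int_{F_T}L_N\mathcal{F}\cdot f\,\dxdy$ into the sum of a truncated Petersson integral against $\mathcal{F}\bar g\,y^N\,\dxdy$ and a boundary integral of $\mathcal{F}f\,d\tau$ over $\partial F_T$; since $g\in S_N(\Gamma')$ is rapidly decaying at the cusps and $\mathcal{F}\in\Mtil_N(\Gamma')$ has only polynomial growth, the Petersson integral converges as $T\to\infty$ to (a sign times) $\langle E'_\varphi(c,\phi),g\rangle$.

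For the boundary term, the $1$-form $\mathcal{F}f\,d\tau$ has weight $0$ and is $\Gamma'$-invariant, so in any polygonal fundamental domain the pairs of internal sides (edges and arcs identified by $\Gamma'$) cancel, leaving only the horocycle at height $T/w_r$ above each cusp $r\in\Gamma'\backslash\PP^1(\Q)$. After conjugation by $\gamma_r$ one substitutes the Fourier expansions
\begin{align}
\mathcal{F}\vert_{\gamma_r}(\tau) &= A(c,\phi_r) + \log(y)\,B(c,\phi_r) + \sum_{n\in\Q}C_n(c,\phi_r,y)\,e(n\tau), \\
f\vert_{\gamma_r}(\tau) &= f^+\vert_{\gamma_r}(\tau) + f^-\vert_{\gamma_r}(\tau),
\end{align}
and observes that the contribution of $f^-\vert_{\gamma_r}$ vanishes in the $T\to\infty$ limit by rapid decay. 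Integration in $x$ across the horocycle projects onto the zero-frequency part of $\mathcal{F}\vert_{\gamma_r}\cdot f^+\vert_{\gamma_r}$, producing a cusp contribution of
\begin{align}
\sum_r w_r\Bigl[A(c,\phi_r)a^+_r(f,0) + B(c,\phi_r)a^+_r(f,0)\log T + \sum_{n\in\Q}C_n(c,\phi_r,T)a^+_r(f,-n)\Bigr].
\end{align}
The inner sum is effectively finite (principal parts are finite), and the terms with $n\leq 0$ vanish in the limit thanks to the exponential decay of $C_n(c,\phi_r,y)$ coming from the factor $e^{-\pi y\|g_1^{-1}v-g_1^t w\|^2}$ in $\varphi^0$, which is strictly positive for regular vectors with $Q(\vbf)\leq 0$.

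Finally, the logarithmic divergence $\log T\cdot\sum_r w_r B(c,\phi_r)a^+_r(f,0)$ must cancel the regularization term $2N(-1)^{N-1}\log T\int_c Q_\psi$. Using $B(c,\phi_r) = N\int_c(e_{\varphi,1}(z_1,\phi_r) - e_{\varphi,2}(z_1,\phi_r))$ and $Q_\psi = \sum_r w_r a^+_r(f,0)e_\psi(z_1,\phi_r)$, this cancellation reduces to the pointwise identity $N(e_{\varphi,1}-e_{\varphi,2}) = 2N(-1)^{N-1}e_\psi$, which follows from applying Proposition~\ref{lemmaderiv} to the singular-vector contributions to the theta kernel (i.e., the constant term in $\tau$). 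After cancellation, what remains is $\langle E'_\varphi(c,\phi),g\rangle + \sum_r w_r\kappa_r(c,f,\phi)$, as claimed. The main obstacle is this last piece of bookkeeping: tracking orientation and sign conventions through the Stokes' computation, and verifying the constant-term identity that produces the exact cancellation of the $\log T$ divergences.
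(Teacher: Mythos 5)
Your proposal follows essentially the same route as the paper's proof: combine Theorem~\ref{proplim2} with Proposition~\ref{lemmaderiv}, apply Stokes' theorem to the $\Gamma'$-invariant $1$-form $E'_\varphi(c,\tau,\phi)f(\tau)\,d\tau$ on $F_T$ to produce the Petersson pairing plus horocycle boundary terms, insert the Fourier expansions at each cusp, and cancel the $\log T$ divergence against the regularization term via the constant-term identity $\int_c e_\psi=\tfrac{(-1)^{N-1}}{2N}B(c,\phi)$ coming from Proposition~\ref{lemmaderiv}. The only loose end is that you leave the sign of the Petersson term undetermined ("a sign times"), which should be pinned down to match the stated formula; otherwise the argument, including your (slightly more explicit than the paper's) justification that the $C_n$ with $n\le 0$ vanish in the limit, is the paper's argument.
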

\begin{proof} 
From Proposition \ref{lemmaderiv} we have
\begin{align}
\int_{c}E_{\psi}(z_1,\tau,\phi)= \frac{(-1)^{N-1}}{2N} L_N E'_\varphi(c,\tau,\phi).
\end{align}
In particular, by comparing with the constant term of $E_\psi$ given in \eqref{fourier Epsi} and using that $L_N\log(y)=y$ we find that
\begin{align}
\int_ce_\psi(z_1,\phi)= \frac{(-1)^{N-1}}{2N} B(c,\phi).
\end{align}
Combining with Theorem \ref{proplim2}, we deduce that
\begin{align} \label{intermediate equality}
\int_{c}\Phi(z_1,f,\phi) & = \lim_{T\rightarrow \8} \bigg( \int_{c}E_{\psi}(z_1,\tau,\phi) \dxdy -\int_cQ_\psi(z_1,f,\phi) \log(T) \bigg) \\
& = \frac{(-1)^{N-1}}{2N} \lim_{T\rightarrow \8} \bigg( \int_{F_T} L_N E'_\varphi(c,\tau,\phi) \dxdy - \log(T) \sum_{r \in \Gamma' \backslash \PP_1(\Q)}w_r a^+_r(f,0) B(c,\phi_r) \bigg).
\end{align}
Since $f$ is of weight $2-N$ and $E'_\varphi(c,\tau,\phi)$ of weight $N$, the form $E'_\varphi(c,\tau,\phi)f(\tau)d\tau \in \Omega^1(\HH)$ is a $1$-form on $\HH$. Applying the exterior derivative $d_\HH$ on both sides gives
\begin{align}
d_\HH \left (E'_\varphi(c,\tau,\phi)f(\tau)d\tau \right )=-(L_NE'_\varphi(c,\tau,\phi))f(\tau)\dxdy+E'_\varphi(c,\tau,\phi)\overline{\xi_{2-N}f(\tau)}y^N\dxdy.
\end{align}
By Stoke's Theorem we have
\begin{align}
\int_{\partial F_T} E'_\varphi(c,\tau,\phi)f(\tau)d\tau & =\int_{F_T}d_\HH \left (E'_\varphi(c,\tau,\phi)f(\tau)d\tau \right ) \\
& =-\int_{F_T}(L_NE'_\varphi(c,\tau,\phi))f(\tau)\dxdy+\int_{F_T} E'_\varphi(c,\tau,\phi)\overline{\xi_{2-N}f(\tau)}y^N\dxdy.
\end{align}
Hence, the limit in \eqref{intermediate equality} is equal to
\begin{align}
 &\langle E'_\varphi(c,\phi),g \rangle+\lim_{T\rightarrow \8} \bigg ( -\int_{\partial F_T} E'_\varphi(c,\tau,\phi)f(\tau) d\tau - \log(T) \sum_{r \in \Gamma' \backslash \PP_1(\Q)}w_r a^+_r(f,0) B(c,\phi_r) \bigg ) \\
&  = \langle E'_\varphi(c,\phi),g \rangle+\sum_{r \in \Gamma' \backslash \PP_1(\Q)} \lim_{T\rightarrow \8} \bigg( \int_{-\frac{w_r}{2}}^\frac{w_r}{2} E'_\varphi(c,x+iT,\phi_r)f\vert_{\gamma_r}(x+iT) dx- w_r \log(T)a^+_r(f,0) B(c,\phi_r) \bigg).
\end{align}
Note that in the second equality we used that the boundary $\partial F_T$ of the truncated fundamental domain is oriented counterclockwise, and that the sides cancel out since $E'_\varphi(c,\tau,\phi)f(\tau)$ is $\Gamma'$-invariant. The limit
\begin{align}
\lim_{T \rightarrow \8} \int_{-\frac{w_r}{2}}^\frac{w_r}{2} E'_\varphi(c,x+iT,\phi_r)(f\vert_{\gamma_r})^-(x+iT) dx=0
\end{align}
vanishes because $(f\vert_{\gamma_r})^-$ is rapidly decreasing as $T \rightarrow \8$. For the integral against $(f\vert_{\gamma_r})^+$ we plug-in the Fourier expansions. Using that 
\begin{align}
\int_{-\frac{w_r}{2}}^\frac{w_r}{2} e^{2i \pi x \frac{m+n}{w_r}}=w_r\delta_{m+n=0}
\end{align}
for two integers $m,n$, we find that
\begin{align}
&\int_{-\frac{w_r}{2}}^\frac{w_r}{2} E'_\varphi(c,x+iT,\phi_r)(f\vert_{\gamma_r})^+(x+iT)dx- w_r\log(T)a^+_r(f,0)B(c,\phi_r) \\
& \hspace{3cm}=  w_rA(c,\phi_r)a^+_r(f,0)  +w_r \lim_{T \rightarrow \8}\sum_{n \in \Q_{>0}}C_n(c,\phi_r,T)  a^+_r(f,-n).
\end{align}
\end{proof}

\section{Period over a torus}
 Let $F$ be a totally real field with ring of integers $\Ocal$ and $\sigma_1,\dots,\sigma_N$ the $N$ real embeddings. Let $F^{\times,+}$ be the set of totally positive elements in $F$, and $\Ocal^{\times,+} = \Ocal^{\times} \cap F^{\times,+}$ the totally positive units. Let $W \coloneqq F$, that we view as a $\Q$-vector space. Let $\epsilon=(\epsilon_1, \dots, \epsilon_N)$ be a $\Q$-basis of $F$, chosen such that
\begin{align}
g_{\epsilon} \coloneqq \begin{pmatrix}
\sigma_1(\epsilon_1) & \cdots & \sigma_1(\epsilon_N) \\[1em]
\vdots & & \vdots \\[1em]
\sigma_N(\epsilon_1) & \cdots & \sigma_N(\epsilon_N)
\end{pmatrix} \in \GL_N(\R)^+
\end{align}
has positive determinant. 

 \subsubsection{ A finite Schwartz function} Let $p$ be a prime, and $\afrak,\cfrak$ be two integral ideals such that $\cfrak \mid p$. With respect to the basis $\epsilon$, we indentify $F \simeq \Q^N$. Then $L_1=\afrak \cfrak$ and $L_2=\afrak$ are two lattices in $\Q^N$ that satisfy $pL_2 \subset L_1 \subset L_2 \subset \Q^N$, and $L_2^\vee=\afrak^{-1}\dfrak^{-1}$. Let $\phi_{\afrak,\cfrak}$ be the finite Schwartz function $\phi_{\afrak,\cfrak} \coloneqq \prod_p 1_{L_1 \otimes \Z_p \times L_2^\vee\otimes \Z_p }$ is in $\Scal(V_{\A_f})$. More generally, for a totally odd character $\chi \colon \Cl(F)^+ \longrightarrow \{ \pm 1\}$ we define
\begin{align}
\phi_{\chi,\cfrak} \coloneqq \sum_{\afrak \in \Cl(F)^+} \chi(\afrak)\phi_{\afrak,\cfrak} \in \Scal(V_{\A_f}).
\end{align}
Let $\Gamma \subseteq \SL_N(\Z)$ be a torsion-free subgroup that preserves $\phi_{\chi,\cfrak}$ for all $\cfrak \mid p$ under the $\rho$-action, {\em i.e.}
\begin{align}
\phi_{\chi,\cfrak}(\gamma^{-1}v,\gamma^tw)=\phi_{\chi,\cfrak}(v,w) \qquad \forall \gamma \in \Gamma.
\end{align}
 Let $R_\epsilon \colon F^\times \hooklongrightarrow \GL_N(\Q)$ be the regular representation with respect to the basis $\epsilon$ of $F$. Since $\afrak \cfrak$ is preserved by multiplication with $\Ocal^\times$, we have $R_\epsilon(\Ocal^\times) \subset \SL_N(\Q)$. Let $\Lambda_\epsilon\coloneqq R_\epsilon^{-1}(R_\epsilon(\Ocal^{\times,+}) \cap \Gamma)$.

\subsubsection{The cycle attached to $F$} Let $A_\R \coloneqq \R_{>0}^N$, that we view as a subgroup of $G$ via the map
\begin{align}
t=(t_1, \dots,t_N) \in \R_{>0}^N \longmapsto a(t)=\begin{pmatrix} t_1 &  & 0 \\  & \ddots &  \\ 0 &  & t_N \\ \end{pmatrix} \in G.
\end{align}Let $A^1_\R \coloneqq A_\R \cap G^1$ be the subgroup of elements of norm $1$.  The group $F^{\times,+}$ acts on $A_\R$ by 
\begin{align}
\nu (t_1, \dots,t_N)=(\sigma_1(\nu)t_1, \dots, \sigma_N(\nu)t_N),
\end{align}
and the group $F^{1,+}$ of elements of norm $1$ acts on $A^1_\R$. Since $\Lambda_\epsilon\coloneqq R_\epsilon^{-1}(R_\epsilon(\Ocal^{\times,+}) \cap \Gamma)$ has finite index in $\Ocal^{\times,+}$, it follows from Dirichlet's unit theorem that
\begin{align}
\Lambda_\epsilon \backslash A^1_\R
\end{align}
is compact.
The regular representation is diagonalized by $g_{\epsilon}$, so that for all $\nu \in F^\times$ we have
\begin{align}
R_\epsilon(\nu)=g^{-1}_{\epsilon}\begin{pmatrix} \sigma_1(\nu) &  & 0 \\  & \ddots &  \\ 0 &  & \sigma_N(\nu) \\ \end{pmatrix}g_{\epsilon}.
\end{align}
Thus, the map $c_\epsilon(t) \coloneqq g_{\epsilon}^{-1}a(t)$ is $\Ocal^{\times,+}$-equivariant and induces a smooth map
\begin{align}
c_\epsilon \colon \Lambda_\epsilon \backslash A^1_\R \longrightarrow \Gamma \backslash S.
\end{align}
It defines a smooth cycle $c_\epsilon \in Z_{N-1}(S_\Gamma,\Z)$.
\begin{prop} The cycle $c_\epsilon$ is transverse to any $S_{[\vbf]}$ with $\vbf \in V_\reg$ and $Q(\vbf) > 0$. 
\end{prop}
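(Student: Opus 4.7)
The plan is to reduce the transversality statement to a submersion condition on a composite map, then verify it by a Jacobian computation that ultimately relies on the totally real structure of $F$. Since $S_{[\vbf]} \subset S_\Gamma$ is locally, near any intersection point, the image of $S_{\gamma\vbf} \subset S$ for some $\gamma \in \Gamma$, and $\gamma\vbf$ remains regular with $Q(\gamma\vbf) = Q(\vbf) > 0$, by $\Gamma$-equivariance it suffices to prove transversality of $c_\epsilon$ (viewed as a map into $S$) with $S_\vbf$ for every $\vbf \in V_\reg$ with $Q(\vbf) > 0$. I would extend the parametrization to the full $A_\R = A^1_\R \times \R_{>0}$ via $\widetilde{c}_\epsilon \colon A_\R \to G$, $a \mapsto g_\epsilon^{-1} a$, and project to $X$. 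Since $\pi \colon X \to S$ restricts to a diffeomorphism $X_\vbf \to S_\vbf$, transversality of $c_\epsilon$ with $S_\vbf$ in $S$ is equivalent to transversality of the extended map with $X_\vbf$ in $X$. By Lemma \ref{rank lemma}, $X_\vbf$ is the zero locus of the submersion $f_\vbf^-$, so this reduces to showing that the composite $\xi_\vbf^- \coloneqq f_\vbf^- \circ \widetilde{c}_\epsilon \colon A_\R \to \R^N$ is a submersion at each of its zeros; since both source and target have dimension $N$, this is equivalent to invertibility of its Jacobian at zeros.

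Next I would compute $\xi_\vbf^-$ explicitly. Writing $a = \diag(t_1, \dots, t_N)$ and setting $v' \coloneqq g_\epsilon v$ and $w' \coloneqq g_\epsilon^{-t} w$, a direct calculation gives $\xi_\vbf^-(t)_i = \tfrac{1}{2}(t_i^{-1} v'_i - t_i w'_i)$. Its Jacobian with respect to $(t_1, \dots, t_N)$ is therefore diagonal, with $i$-th entry $-\tfrac{1}{2}(t_i^{-2} v'_i + w'_i)$. At a zero of $\xi_\vbf^-$, the vanishing condition $t_i^{-1} v'_i = t_i w'_i$ yields $t_i^{-2} v'_i = w'_i$, so the diagonal entries simplify to $-w'_i$.

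The main step, and the only real obstacle, is to show that every $w'_i$ is nonzero. This is where the totally real structure of $F$ enters essentially. Identifying $v \in \Q^N$ with $\alpha \coloneqq \sum_j v_j \epsilon_j \in F$ gives $v'_i = \sigma_i(\alpha)$ directly from the formula for $g_\epsilon$. For $w$, let $\{\epsilon_j^\vee\}$ denote the $\Q$-basis of $F$ dual to $\{\epsilon_j\}$ under the trace pairing, and set $\beta \coloneqq \sum_j w_j \epsilon_j^\vee \in F$. The identity $g_\epsilon^t g_\epsilon = (\Tr_{F/\Q}(\epsilon_i \epsilon_j))_{ij}$ gives $(g_\epsilon^{-t})_{ij} = \sigma_i(\epsilon_j^\vee)$, and hence $w'_i = \sigma_i(\beta)$. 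Regularity of $\vbf$ forces $v, w \neq 0$, so $\alpha, \beta \in F^\times$, and every Galois conjugate $\sigma_i(\beta)$ is nonzero. The Jacobian is thus invertible at each zero of $\xi_\vbf^-$, proving the transversality.
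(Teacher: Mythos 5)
Your proof is correct and takes essentially the same route as the paper: translate by $g_{\epsilon}$, reduce transversality of the torus cycle with $S_\vbf$ in $S$ to transversality of the $A_\R$-orbit with $X_\vbf$ in $X$ (via Lemma \ref{rank lemma} and the section $f^-_\vbf$), and conclude from the fact that the entries of $g_{\epsilon}v$ and $g_{\epsilon}^{-t}w$ are Galois embeddings of nonzero elements of $F$, hence nonzero. The only difference is that you write out explicitly the diagonal Jacobian computation at the zeros, which the paper compresses into the phrase ``a direct computation shows that this is the case.''
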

\begin{proof}  By translating by $g_{\epsilon}$, we see that the image of $g_{\epsilon}^{-1}A^1_\R$ is transverse to $S_\vbf$in $S$ for $\vbf \in V_\reg \subset \Q^{2N}$ if the image of $A^1_\R$ is transverse to $S_\wbf$ for $\wbf=(g_{\epsilon}v,g_{\epsilon}^\vee w) \in \sigma(F) \times \sigma(F)^\vee \subset V_\R$. Note that a vector $\wbf \in \sigma(F) \times \sigma(F)^\vee \subset V_\R$ has nonzero entries if it is regular (because $\sigma_i(\lambda)=0$ implies that $\lambda=0$). As in the proof of Theorem \ref{proplim2}, $A^1_\R$ is transverse to $S_\wbf$ if and only $A_\R$ is transverse to $X_\wbf$, which follows from the fact that the restriction of $s_\wbf$ to  the image $A_\R \subset X$ has rank $N$. A direct computation shows that this is the case for a vector $(g_{\epsilon}v,g_{\epsilon}^\vee w)$, since such a vector has nonzero entries if $\vbf$ is a regular.
\end{proof}

\subsection{Hecke characters and $L$-function} We briefly recall the main properties of Hecke $L$-functions for totally real fields, following \cite{neukirch}.
Let $I_F$ be the group of fractional ideals of $F$, and 
\begin{align}
\chi \colon I_F \longrightarrow \{ \pm 1\}
\end{align} be a totally odd Hecke character. {\em i.e.} such that
\begin{align}
\chi((\alpha))=\sgn(N(\alpha))
\end{align}
on principal ideals. The existence of such a character implies that $\Ocal$ does not contain a unit of negative norm, in particular it implies that $F$ has even degree. These are Dirichlet characters of type $(q,0)$ with $q=(1,\dots,1)$ in the notation of \cite[Definition.~6.8 on p.~478]{neukirch}.
Hence, it defines a character $\chi \colon \Cl(F)^+ \longrightarrow \{ \pm 1\}$ on the narrow class group. Let
\begin{align}
 L(\chi,s) \coloneqq \sum_{\substack{\afrak \subset \Ocal}} \frac{\chi(\afrak)}{N(\afrak)^s} \qquad \re(s) \geq 1,
\end{align}
be the associated $L$-function. The completed $L$-function
\begin{align}
\vert D_F \vert^\frac{s}{2}  \Gamma\left (\frac{1+s}{2} \right )^N L(\chi,s)
\end{align}
admits an analytic continuation to the entire plane and a functional equation - see \cite[Theorem.~8.5 on p.~502]{neukirch}.  In particular,  since the Gamma function is nonzero and has poles at negative integers, we deduce that $L(\chi,s)$ is analytic on $\C$ and has a zero of order $N$ at odd negative integers. More generally, the proof of the following Theorem due to Shintani can be found in \cite[Theorem.~9.8 on p.~515]{neukirch}.
\begin{thm}[Shintani] \label{Shintani}  For $n \geq 1$, the special value $L(\chi,1-n)$ is a rational number.
\end{thm}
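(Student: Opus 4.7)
The plan is to follow Shintani's original strategy: decompose $L(\chi,s)$ into partial zeta functions, subdivide a fundamental domain for $\Ocal^{\times,+}$ acting on the totally positive cone into simplicial cones, and reduce to the explicit rationality of Shintani zeta values at non-positive integers.

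First, I would group ideals by narrow class to obtain
\begin{align}
L(\chi,s) = \sum_{[\afrak] \in \Cl(F)^+} \chi(\afrak)\, \zeta_F(s,[\afrak]), \qquad \zeta_F(s,[\afrak]) \coloneqq \sum_{\substack{\mathfrak{b} \subset \Ocal \\ [\mathfrak{b}]=[\afrak]}} \N(\mathfrak{b})^{-s}.
\end{align}
The substitution $\mathfrak{b}\afrak^{-1} = (\alpha)$ with $\alpha \in F^{\times,+}$ identifies the partial zeta with $\N(\afrak)^s \sum_{\alpha} \N(\alpha)^{-s}$, where $\alpha$ ranges over $(\afrak^{-1}\cap F^{\times,+})/\Ocal^{\times,+}$. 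It suffices to prove rationality of each partial zeta at $s=1-n$ and sum with the values $\chi(\afrak) \in \{\pm 1\}$.

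Second, I would invoke Shintani's unit theorem, which decomposes $F \otimes \R)_+ \simeq \R_{>0}^N$ as a disjoint union of $\Ocal^{\times,+}$-translates of finitely many rational open simplicial cones $C_j = \sum_{i=1}^{d_j} \R_{>0}\, v_i^{(j)}$ with generators $v_i^{(j)} \in F^{\times,+}$. Choosing a finite set of coset representatives $x_0$ of $\afrak^{-1}$ modulo the lattice spanned by the $v_i^{(j)}$ inside each $C_j$, the inner sum becomes a finite $\Q$-linear combination of Shintani zeta functions
\begin{align}
Z(s; x_0, v_1, \ldots, v_d) = \sum_{n \in \Z_{\geq 0}^{d}} \N\!\Big(x_0 + \sum_{i=1}^{d} (n_i+1) v_i\Big)^{-s}.
\end{align}

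Third, I would establish rationality of $Z(s; x_0, v_1, \ldots, v_d)$ at $s = 1-n$ for $n \geq 1$ by the classical Mellin/contour-integral argument. Expanding $\N(\cdot)^{-s} = \prod_{k=1}^N \sigma_k(\cdot)^{-s}$ and using $\Gamma(s)^N Z(s;\ldots) = \int_{(0,\infty)^N} (t_1 \cdots t_N)^{s-1} F(\mathbf{t})\, d\mathbf{t}$ for an explicit generating function $F$ built from the geometric series $\prod_i (1-e^{-\sum_k \sigma_k(v_i)t_k})^{-1}$, one analytically continues by deforming each $t_k$-contour to a Hankel contour. The special values at non-positive integers emerge as residues and are expressible as rational polynomial expressions in $\sigma_k(v_i)$ and $\sigma_k(x_0)$ weighted by (generalized) Bernoulli numbers, hence rational. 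Summing the finitely many rational contributions over cones, classes, and signs $\chi(\afrak)$ yields $L(\chi, 1-n) \in \Q$.

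The main obstacle is Shintani's cone decomposition of the positive cone under $\Ocal^{\times,+}$: this is a refined geometric statement beyond Dirichlet's unit theorem and requires a careful inclusion–exclusion over faces of the cones to obtain a disjoint fundamental domain (the naive decomposition double-counts along boundary strata). Once this decomposition and the lattice-coset unfolding are in place, the residue calculation giving the Bernoulli-type formulas is a standard, essentially formal computation.
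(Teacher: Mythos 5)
Your outline is correct and is essentially the proof behind the statement as the paper presents it: the paper gives no argument of its own but cites Neukirch (Theorem 9.8, p.~515), whose proof is exactly this route — splitting $L(\chi,s)$ into partial zeta functions over narrow classes, Shintani's cone decomposition of the totally positive cone modulo $\Ocal^{\times,+}$, and rationality of Shintani zeta values at non-positive integers via Bernoulli-type formulas. The only slip is bookkeeping: with your convention $\mathfrak{b}=(\alpha)\afrak$, $\alpha\in\afrak^{-1}$, the prefactor should be $\N(\afrak)^{-s}$ rather than $\N(\afrak)^{s}$, which is immaterial at integer arguments since $\N(\afrak)^{1-n}\in\Q$.
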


Let $\afrak_1, \dots, \afrak_h$ be representatives of the narrow class group. If we write $\afrak=(\mu) \afrak_i^{-1}$ for some principal ideal $(\mu)$ then we can split the sum as
\begin{align}
 L(\chi,s)=\sum_{[\afrak_i]\in \Cl(F)^+} \chi(\afrak_i) N(\afrak_i)^s\sum_{\mu \in \afrak_i /\Ocal^{\times,+}} \frac{\sgn(N(\mu))}{\vert N(\mu)\vert^s}.
\end{align}

\subsection{The integral} 
For a totally odd character $\chi \colon \Cl(F)^+ \longrightarrow \{ \pm 1\}$ as above we set
\begin{align}
\Ecal_{\chi,\cfrak}(\tau,s) \coloneqq \sum_{[\afrak] \in \Cl(F)^+} \chi(\afrak) \Ecal_{\afrak,\cfrak}(\tau,s),
\end{align}
where
\begin{align}\label{intermediate label}
\Ecal_{\afrak,\cfrak}(\tau,s) & \coloneqq   \frac{\N(\afrak)^{2s}}{[\Ocal^{\times,+}:\Lambda_\epsilon]}\int_{c_\epsilon} E_{\varphi}(z_1,\tau,\phi_{\afrak,\cfrak},s).
\end{align}
The factor $\N(\afrak)^{2s}$ ensures that the integral only depends on the class of $\afrak$ in $\Cl(F)^+$. It converges for any $s \in \C$ and let $\Ecal_{\chi,\cfrak}(\tau)$ be the value at $s=0$. We have
\begin{align} \Ecal_{\afrak,\cfrak}(\tau,s) = 2^N\N(\afrak)^{2s} \sqrt{y}^{-N} \int_{\Ocal^{\times,+} \backslash A^1_\R}\int_0^\8 \sum_{\vbf \in L_1 \times L_2^\vee } \omega(1,h_\tau)   c_\epsilon^\ast \varphi(z,\vbf) u^{-2Ns}\frac{du}{u}.
\end{align}
By the equivariance of $\varphi(z,\vbf)$ we have
\begin{align}
c_\epsilon^\ast \varphi(z,\vbf)=a^\ast (g^{-1}_{\epsilon})^\ast \varphi(z,\vbf) =a^\ast \varphi(z,\rho_{g_{\epsilon}} \vbf).
\end{align}
The embedding $\sigma(\afrak) \coloneqq \left \{ (\sigma_1(\lambda),\dots,\sigma_N(\lambda)) \in \R^N \ \vert \ \lambda \in \afrak \right \} \subset \R^N$ of $\afrak$ in $\R^N$ via the $N$ real embeddings is a lattice in $\R^N$. We have
 \begin{align} \label{translatelattice}
\rho(g_{\epsilon})(L_1 \times L_2^\vee)=g_{\epsilon} L_1 \times g_{\epsilon}^\vee L_2^\vee= \sigma(\afrak) \times \sigma(\afrak^{-1} \dfrak^{-1}) \subset \R^N \times \R^N.
\end{align}
We will simply write $\afrak \subset \R^N$ for such a lattice, instead of $\sigma(\afrak)\subset \R^N$. By using the diffeomorphism $A_\R \simeq A^1_\R \times \R_{>0}$ we deduce that
\begin{align}
\Ecal_{\afrak,\cfrak}(\tau,s) = \N(\afrak)^{2s} \sqrt{y}^{-N} \int_{\Ocal^{\times,+} \backslash A_\R} \sum_{\vbf \in \afrak\cfrak \times \afrak^{-1} \dfrak^{-1}} \omega(1,h_\tau)   a^\ast \varphi(z,\vbf) u^{-2Ns}\frac{du}{u}.
\end{align}
The restriction of the $1$-form $\lambda$ (see \eqref{form lambda}) to $A_\R$ is
\begin{align}
a^\ast (\lambda) =\begin{pmatrix} \frac{dt_1}{t_1} & \cdots & 0 \\  & \ddots &  \\ 0 & \cdots & \frac{dt_N}{t_N}
\end{pmatrix}.
\end{align}
Hence, the restriction of $\lambda(\uud)$ is only nonzero for $\uud=(1,1,\dots,1)$ and
\begin{align}
\lambda(\uud)=\frac{dt_1}{t_1} \wedge \cdots \wedge \frac{dt_N}{t_N}.
\end{align}
Using the formula given in \eqref{formula varphi zero}, the pullback of $\varphi(z,\vbf)$ to the split torus $A_\R$ is
\begin{align}
\prod_{k=1}^N (v_kt^{-1}_k+w_kt_k)\exp \left (-\pi \left ( v_kt^{-1}_k\right )^2 -\pi \left (w_kt_k \right )^2 \right ) \frac{dt_k}{t_k},
\end{align}
By Proposition \ref{prop fourier transform}, the partial Fourier transform of this form in the variable $w$ is
\begin{align} \label{partial fourier phi}
 i^N \overline{\N(vi+w)}\prod_{k=1}^N \exp \left (-\pi t_k^{-2}\vert v_ki+w_k \vert^2 \right )t_k^{-2} \frac{dt_k}{t_k}.
\end{align}
Applying Poisson summation, unfolding the integral (for $\re(s)$ large enough the sum is absolutely convergent) and translating by $g_{\epsilon}^{-1}$ gives
\begin{align}
\Ecal_{\afrak,\cfrak}(\tau,s) & =\vert D_F \vert^\frac{1}{2}N(\afrak)^{1+2s} \sqrt{y}^{-N} \int_{\Ocal^{\times,+} \backslash A_\R} \sum_{(v,w) \in \afrak\cfrak \times \afrak } \omega'(1,h_\tau)a^\ast \widehat{\varphi}(t,v,w)u^{-2Ns} \\
& = \vert D_F \vert^\frac{1}{2}N(\afrak)^{1+2s} i^N \frac{\overline{\N(v\tau+w)}}{y^N} \sum_{(v,w) \in \afrak\cfrak \times \afrak/\Ocal^{\times,+}} \int_{A_\R} \prod_{k=1}^N  \exp \left (-\pi t_k^{-2}\frac{ \vert v_ki+w_k \vert^2}{y} \right )t_k^{-2-2s} \frac{dt_k}{t_k} \\
& = 2^{-N}i^N \pi^{-N(1+s)}\Gamma \left ( 1+s \right )^N \vert D_F \vert^\frac{1}{2}  \sum_{(v,w) \in \afrak\cfrak \times \afrak/\Ocal^{\times,+}} \frac{y^{Ns}}{\N(v\tau+w) \vert \N(v\tau+w) \vert^{2s}}.
\end{align}
Hence, we find that
\begin{align}
\Ecal_{\chi,\cfrak}(\tau,s)= C(s) E_{\chi,\cfrak}(\tau,s)
\end{align}
where we define
\begin{align} \label{Eisenstein series one}
E_{\chi,\cfrak}(\tau,s) \coloneqq \sum_{[\afrak] \in \Cl(F)^+}\chi(\afrak) N(\afrak)^{1+2s} \sum_{(v,w) \in \afrak\cfrak \times \afrak /\Ocal^{\times,+}} \frac{y^{Ns}}{\N(v\tau+w) \vert \N(v\tau+w) \vert^{2s}}
\end{align}
and $C(s) \coloneqq 2^{-N}i^N \pi^{-N(1+s)}\Gamma \left ( 1+s \right )^N \vert D_F \vert^\frac{1}{2}$.

\subsection{Fourier expansion}We can use the integral representation to find the Fourier expansion. When $N=2$, the computation are similar to \cite{GKZ87}. Using the previous computations \eqref{intermediate label} we deduce that
\begin{align}
& \Ecal_{\chi,\cfrak}(\tau,s) = \sum_{[\afrak] \in \Cl(F)^+} \chi(\afrak) \N(\afrak)^{2s}  \sqrt{y}^{-N} \int_{\Ocal^{\times,+}\backslash A_\R} \sum_{(v,w) \in \afrak\cfrak \times \afrak^{-1} \dfrak^{-1} } \omega(1,h_\tau)c_\epsilon^\ast  \varphi(z,v,w) u^{-2Ns} \\
& =  \sum_{[\afrak] \in \Cl(F)^+} \chi(\afrak) \N(\afrak)^{2s}\int_{\Ocal^{\times,+}\backslash A_\R} \sqrt{y}^{N} \sum_{(v,w)} \prod_{k=1}^N (v_kt^{-1}_k+w_kt_k)t_k^{-2s} e^{-y\pi \left ( v_kt^{-1}_k \right )^2-y\pi \left (w_kt_k \right )^2} \frac{dt_k}{t_k} e(x \tr(vw))
\end{align}
where $Q(\vbf)=\sum_k v_kw_k=\tr(vw)$. We find that the constant term of $\Ecal_{\chi,\cfrak}(\tau,s)$ is given by the singular vectors ($v=0$ or $w=0$) and is equal to
\begin{align} \label{constant term in s}
c_{\chi,\cfrak}(0,2s) \coloneqq 2^{-N}y^{Ns}\chi(\cfrak)N(\cfrak)^{-2s}\Lambda(\chi,2s)+2^{-N}y^{-Ns}\chi(\dfrak)\N(\dfrak)^{-2s}\Lambda(\chi,-2s),
\end{align}
where we set
\begin{align}
\Lambda(\chi,2s) \coloneqq\Gamma \left (\frac{1}{2}+s \right )^N\pi^{-N\left (\frac{1}{2}+s \right )}L(\chi,2s).
\end{align} 

The higher Fourier coefficients are given by the sum over the regular vectors ($v \neq 0$ and $w \neq 0$). After unfolding we find that
\begin{align}
 \Ecal_{\chi,\cfrak}(\tau,s)=c_{\chi,\cfrak}(0,2s)+\sum_{\substack{\nu \in \cfrak\dfrak^{-1} \\ \nu \neq 0}} c_{\chi,\cfrak}(\nu,2s) e(\tau \tr(\nu))
\end{align}
where
\begin{align}
c_{\chi,\cfrak}(\nu,2s) & \coloneqq e^{-2\pi y \Tr(\nu)}\sqrt{y}^{N}\sum_{[\afrak] \in \Cl^+(F)} \chi(\afrak) \N(\afrak)^{2s} \\
& \hspace{3cm} \times \sum_{\substack{(v,w) \in \afrak\cfrak \times \afrak^{-1} \dfrak^{-1}/\Ocal^{\times,+} \\ \nu=vw }} \prod_{k=1}^N \int_0^\8 (v_kt^{-1}_k+w_kt_k)t_k^{-2s} e^{-y\pi \left ( v_kt^{-1}_k \right )^2-y\pi \left (w_kt_k \right )^2} \frac{dt_k}{t_k}.
\end{align}
For a nonzero $a \in \R^\times$ we define
\begin{align}
k_s(a) \coloneqq e^{2 \pi a}\vert a \vert^{\frac{1}{2}+s} \left ( K_{\frac{1}{2}+s}(2\pi\vert a\vert)+\sgn(a)K_{\frac{1}{2}-s}(2\pi\vert a\vert) \right )
\end{align}
where $K_s$ is the $K$-Bessel function
\begin{align}
K_s(a) \coloneqq \int_0^\8e^{-a(t+t^{-1})/2}t^s\frac{dt}{t}.
\end{align}
We have
\begin{align}
c_{\chi,\cfrak}(\nu,2s)= 2^{-N}y^{-Ns} \prod_{k=1}^Nk_s(y\nu_k) \sum_{[\afrak] \in \Cl(F)^+} \chi(\afrak) N(\afrak)^{2s}  \sum_{\substack{(v,w) \in \afrak\cfrak \times \afrak^{-1} \dfrak^{-1}/\Ocal^{\times,+} \\ \nu=vw }}  \frac{\sgn(N(v))}{\vert N(v)\vert^{2s}}.
\end{align}
 Moreover, using the exact sequence
\begin{align}
1 \longrightarrow \Ocal^{\times}/\Ocal^{\times,+} \longrightarrow F^\times/F^{\times,+} \longrightarrow \Cl(F)^+\longrightarrow \Cl(F)\longrightarrow 1
\end{align}
we can rewrite the previous term as 
\begin{align}
& \sum_{[\afrak] \in \Cl(F)^+} \chi(\afrak) N(\afrak)^{2s}  \sum_{\substack{(v,w) \in \afrak\cfrak \times \afrak^{-1} \dfrak^{-1}/\Ocal^{\times,+} \\ \nu=vw }}  \frac{\sgn(N(v))}{\vert N(v)\vert^{2s}} \\
& = [F^\times: F^{\times,+}] \sum_{[\afrak] \in \Cl(F)} \chi(\afrak) N(\afrak)^{2s}  \sum_{\substack{(v,w) \in \afrak\cfrak \times \afrak^{-1} \dfrak^{-1}/\Ocal^{\times} \\ \nu=vw }}  \frac{\sgn(N(v))}{\vert N(v)\vert^{2s}}  \\
& = 2^N \sigma_{\chi,\cfrak}(\nu,2s).
\end{align}
where for $\nu \in F^\times$ we set
\begin{align}
 \sigma_{\chi,\cfrak}(\nu,2s) \coloneqq \sum_{\substack{ \nfrak \mid \nu \dfrak \\ \cfrak \mid \nfrak}} \frac{\chi(\nfrak)}{\N(\nfrak)^{2s}}.
\end{align}
Note that $\sigma_{\chi,\cfrak}(\nu,2s)=0$ if $\nu \not \in \cfrak\dfrak^{-1}$. 

\begin{prop}\label{Fourier expansion s} The Eisenstein series $\Ecal_{\chi,\cfrak}(\tau,s)$ has the Fourier expansion
\begin{align}
\Ecal_{\chi,\cfrak}(\tau,s) =2^{-N}y^{Ns}\chi(\cfrak)\N(\cfrak)^{-2s}\Lambda(\chi,2s)&+2^{-N}y^{-Ns}\chi(\dfrak)\N(\dfrak)^{-2s}\Lambda(\chi,-2s) \\
&  \qquad \qquad +y^{-Ns}\sum_{\substack{\nu \in \dfrak^{-1}\\ \nu \neq 0}} \sigma_{\chi,\cfrak}(\nu,2s) \prod_{k=1}^N k_s(y \nu_k) e(\tau\tr(\nu)).
\end{align}
\end{prop}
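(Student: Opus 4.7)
The plan is to assemble the Fourier expansion from the computations already carried out in the preceding subsection. First, the theta integral representation
\begin{align}
\Ecal_{\chi,\cfrak}(\tau,s) = \sum_{[\afrak]}\chi(\afrak)\N(\afrak)^{2s}\sqrt{y}^{N}\int_{\Ocal^{\times,+}\backslash A_\R} \sum_{(v,w)\in \afrak\cfrak \times \afrak^{-1}\dfrak^{-1}} \prod_{k=1}^N (v_kt_k^{-1}+w_kt_k)t_k^{-2s} e^{-y\pi(v_kt_k^{-1})^2-y\pi(w_kt_k)^2}\frac{dt_k}{t_k}\, e(x\tr(vw))
\end{align}
is already derived. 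I would split the $(v,w)$ sum into the three pieces $v=0$, $w=0$, and $(v,w)$ both nonzero. Only the last contributes terms with $\tr(vw)\neq 0$, so the first two give the constant term and the third gives the nontrivial Fourier coefficients indexed by $\nu=vw\in F^\times$.

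For the constant term, when $v=0$ the integral factors as $\int_0^\infty w_k t_k \cdot t_k^{-2s}e^{-y\pi w_k^2 t_k^2}\frac{dt_k}{t_k}$ for each $k$, which after the substitution $u_k=y\pi w_k^2 t_k^2$ evaluates to a product of Gamma factors; summing over $(0,w)\in \afrak^{-1}\dfrak^{-1}/\Ocal^{\times,+}$ and over class group representatives assembles the completed $L$-function $\Lambda(\chi,-2s)$ weighted by $\chi(\dfrak)\N(\dfrak)^{-2s}$. The case $w=0$ is symmetric and yields the term with $\Lambda(\chi,2s)$. This reproduces formula \eqref{constant term in s}.

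For the nonzero Fourier coefficients, I would fix $\nu\in F^\times$ and group $(v,w)$ with $vw=\nu$. After unfolding the integral against the $\Ocal^{\times,+}$-action (permissible for $\re(s)$ large), each factor becomes
\begin{align}
\int_0^\infty (v_kt_k^{-1}+w_kt_k)t_k^{-2s}e^{-y\pi(v_kt_k^{-1})^2-y\pi(w_kt_k)^2}\frac{dt_k}{t_k},
\end{align}
which is a standard Bessel-type integral: the substitution $t_k\mapsto \sqrt{|v_k/w_k|}\,t_k$ reduces it to the integral representation of $K_{1/2\pm s}(2\pi |y\nu_k|)$, and after pulling out the prefactor $e^{-2\pi y\nu_k}$ absorbed into $e(\tau\tr(\nu))=e(x\tr\nu)e^{-2\pi y\tr\nu}$ one obtains the function $k_s(y\nu_k)$ defined in the text. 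The key identity to check is that the sign contribution from $v_kt_k^{-1}+w_kt_k$ exactly produces the $K_{\frac12+s}+\sgn(a)K_{\frac12-s}$ combination; this is the one computational step requiring care, and is the main obstacle.

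Finally, the resulting sum over $[\afrak]\in \Cl(F)^+$ and pairs $(v,w)\in \afrak\cfrak\times \afrak^{-1}\dfrak^{-1}/\Ocal^{\times,+}$ with $vw=\nu$ is rewritten, via the exact sequence $1\to \Ocal^\times/\Ocal^{\times,+}\to F^\times/F^{\times,+}\to \Cl(F)^+\to \Cl(F)\to 1$ already invoked in the text, as a sum over ideals $\nfrak=(v)\afrak^{-1}\cfrak^{-1}$ dividing $\nu\dfrak$ with $\cfrak\mid\nfrak$. This identifies the ideal sum with $\sigma_{\chi,\cfrak}(\nu,2s)$. Substituting $\nu \neq 0$ forces $\nu\in \cfrak\dfrak^{-1}\subset \dfrak^{-1}$ (since otherwise $\sigma_{\chi,\cfrak}(\nu,2s)=0$), so the indexing set in the statement is correct. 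Combining the three pieces yields the claimed Fourier expansion.
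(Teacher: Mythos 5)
Your proposal is correct and follows essentially the same route as the paper: starting from the theta integral over $\Ocal^{\times,+}\backslash A_\R$, splitting into singular vectors (Gamma integrals assembling $\Lambda(\chi,\pm 2s)$ with the $\chi(\cfrak)$, $\chi(\dfrak)$ weights) and regular vectors (the $K_{\frac12\pm s}$ combination yielding $k_s(y\nu_k)$ up to the $\sgn(v_k)$ factor that feeds into $\sgn(\N(v))$), and then converting the class-group sum to the divisor sum $\sigma_{\chi,\cfrak}(\nu,2s)$ via the exact sequence. The only slip is notational: the ideal in the divisor sum should be $\nfrak=(v)\afrak^{-1}$ (which is automatically divisible by $\cfrak$ since $v\in\afrak\cfrak$), not $(v)\afrak^{-1}\cfrak^{-1}$.
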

At $s=0$, we have $k_0(a)=1+\sgn(a)$. Thus, the Fourier expansion is
\begin{align} \label{fourier expansion at zero}
\Ecal_{\chi,\cfrak}(\tau)=(\chi(\cfrak)+\chi(\dfrak))L(\chi,0)+ 2^{N}\sum_{\substack{\nu \in \dfrak^{-1}\\ \nu \gg 0}} \sigma_{\chi,\cfrak}(\nu,0)e(\tau \tr(\nu)).
\end{align}
The modular curve $Y_0(p)$ has two cusps $0$ and $\8$. Let $\gamma_0=\begin{psmallmatrix}0 & -1 \\ 1 & 0	\end{psmallmatrix} \in \SL_2(\Z)$, which sends $\8$ to the cusp $0$.  A direct computation shows that
\begin{align}
\Ecal_{\chi,\cfrak}(\tau,s) \vert_{\gamma_0}=\chi(\cfrak)\N(\cfrak)^{-1-2s}\Ecal_{\chi,\cfrak^{-1}}(\tau,s),
\end{align}
which gives a Fourier expansion of $\Ecal_{\chi,\cfrak}(\tau,s)$ at the cusp $0$. If we let $\bar{\cfrak} \subset \Ocal$ be the ideal such that $p=\cfrak\bar{\cfrak}$, then
\begin{align}
\Ecal_{\chi,\cfrak}(\tau,s) \vert_{\gamma_0}=p^{Ns}\chi(\cfrak)\N(\cfrak)^{-1-2s}\Ecal_{\chi,\bar{\cfrak}}\left (\frac{\tau}{p},s \right ),
\end{align}

\subsection{The functional equation}

\begin{prop}
The Eisenstein series $\Ecal_{\chi,\cfrak}(\tau,s) = C(s) E_{\chi,\cfrak}(\tau,s)$ satisfies the functional equation
\begin{align}
\Ecal_{\chi,\cfrak}(\tau,-s)=\chi(\dfrak\cfrak)N(\dfrak\cfrak)^{2s}\Ecal_{\chi,\cfrak}(\tau,s).
\end{align}
In particular, if $\chi(\cfrak)=-\chi(\dfrak)$ then $\Ecal_{\chi}(\tau)=0$.
\end{prop}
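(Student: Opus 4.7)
The plan is to verify the functional equation directly from the Fourier expansion of Proposition \ref{Fourier expansion s}, treating the constant term and the non-constant Fourier coefficients separately. This is cleaner than trying to deduce it from the general theta lift functional equation in Theorem \ref{functional equation lift}, which would require explicitly identifying $c_\epsilon^{\ast}$ and $\phi_{\chi,\cfrak}^{\ast}$ in terms of dual bases and swapped lattices.

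For the constant term $c_{\chi,\cfrak}(0,2s)$, the two summands $2^{-N}y^{Ns}\chi(\cfrak)\N(\cfrak)^{-2s}\Lambda(\chi,2s)$ and $2^{-N}y^{-Ns}\chi(\dfrak)\N(\dfrak)^{-2s}\Lambda(\chi,-2s)$ are simply swapped by $s \mapsto -s$. Using $\chi^2 \equiv 1$ together with $\chi(\dfrak\cfrak)\chi(\cfrak)=\chi(\dfrak)$ and the identity $\N(\dfrak\cfrak)^{2s}\N(\cfrak)^{-2s}=\N(\dfrak)^{2s}$, one checks immediately that $c_{\chi,\cfrak}(0,-2s)=\chi(\dfrak\cfrak)\N(\dfrak\cfrak)^{2s}c_{\chi,\cfrak}(0,2s)$. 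Notably, no appeal to the functional equation of $L(\chi,s)$ is required at this stage.

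For the $\nu$-th Fourier coefficient, $\nu \in \dfrak^{-1}\smallsetminus\{0\}$, two separate ingredients are needed. First, a divisor-sum symmetry for $\sigma_{\chi,\cfrak}(\nu,2s)$: writing $\nfrak=\cfrak\afrak$ with $\afrak$ dividing $\nu\dfrak\cfrak^{-1}$ (both sides vanish unless $\cfrak \mid \nu\dfrak$) and applying the involution $\afrak \mapsto (\nu\dfrak\cfrak^{-1})\afrak^{-1}$ on divisors, together with $\chi^{-1}=\chi$, yields
\begin{align}
\sigma_{\chi,\cfrak}(\nu,-2s)=\chi(\nu\dfrak\cfrak)\N(\nu\dfrak\cfrak)^{2s}\sigma_{\chi,\cfrak}(\nu,2s).
\end{align}
Second, a functional equation for the Bessel factor: the symmetry $K_\mu=K_{-\mu}$ of the $K$-Bessel function, applied to the defining formula of $k_s$, gives $k_{-s}(a)=\sgn(a)\vert a\vert^{-2s}k_s(a)$, whence
\begin{align}
\prod_{k=1}^N k_{-s}(y\nu_k)=\sgn(N(\nu))\vert N(\nu)\vert^{-2s}y^{-2Ns}\prod_{k=1}^N k_s(y\nu_k).
\end{align}
Since $\chi$ is totally odd, $\chi((\nu))=\sgn(N(\nu))$ and $\N((\nu))=\vert N(\nu)\vert$, so the $(\nu)$-dependent factors $\sgn(N(\nu))\vert N(\nu)\vert^{\pm 2s}$ telescope between the two displays and what survives is precisely $\chi(\dfrak\cfrak)\N(\dfrak\cfrak)^{2s}$; combined with the $y^{\pm Ns}$ prefactor in the Fourier expansion, this gives the functional equation for the $\nu$-th coefficient.

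The \emph{in particular} assertion is immediate on setting $s=0$: the functional equation becomes $\Ecal_{\chi,\cfrak}(\tau)=\chi(\dfrak\cfrak)\Ecal_{\chi,\cfrak}(\tau)$, which forces $\Ecal_{\chi,\cfrak}(\tau)=0$ whenever $\chi(\cfrak)=-\chi(\dfrak)$. The main obstacle is carrying out the divisor involution for $\sigma_{\chi,\cfrak}(\nu,2s)$ cleanly: one must extract the $\chi(\cfrak)\N(\cfrak)^{-2s}$ factor from the condition $\cfrak \mid \nfrak$, perform the involution $\afrak \mapsto (\nu\dfrak\cfrak^{-1})\afrak^{-1}$ on the complementary divisor, and reassemble everything into the form $\chi(\nu\dfrak\cfrak)\N(\nu\dfrak\cfrak)^{2s}\sigma_{\chi,\cfrak}(\nu,2s)$. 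The remaining bookkeeping (signs, absolute values versus ideal norms, and exponents of $\N$) is routine.
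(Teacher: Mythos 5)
Your proof is correct, and its overall strategy matches the paper's: the paper also proves the functional equation coefficient-by-coefficient from the Fourier expansion of Proposition \ref{Fourier expansion s} (rather than via Theorem \ref{functional equation lift}), and it likewise notes that the constant term swaps its two summands directly, with no appeal to the functional equation of $L(\chi,s)$. Where you diverge is in the non-constant coefficients. The paper does not touch the closed formulas at all: it observes that in the integral representation of the higher coefficients, replacing $s$ by $-s$ has the same effect as swapping $v$ and $w$ (i.e.\ exchanging the lattices $\afrak\cfrak$ and $\afrak^{-1}\dfrak^{-1}$), and then reorders the sum over the narrow class group to recover the same series times $\chi(\dfrak\cfrak)\N(\dfrak\cfrak)^{2s}$ -- the same swap that underlies the general Theorem \ref{functional equation lift}. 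You instead verify the identity $c_{\chi,\cfrak}(\nu,-2s)=\chi(\dfrak\cfrak)\N(\dfrak\cfrak)^{2s}c_{\chi,\cfrak}(\nu,2s)$ from the evaluated coefficients: the divisor involution $\nfrak \mapsto \nu\dfrak\cfrak\,\nfrak^{-1}$ giving $\sigma_{\chi,\cfrak}(\nu,-2s)=\chi(\nu\dfrak\cfrak)\N(\nu\dfrak\cfrak)^{2s}\sigma_{\chi,\cfrak}(\nu,2s)$, the archimedean identity $k_{-s}(a)=\sgn(a)\vert a\vert^{-2s}k_s(a)$ (which in fact only needs $\sgn(a)^2=1$, not $K_\mu=K_{-\mu}$), and the total oddness $\chi((\nu))=\sgn(N(\nu))$, $\N((\nu))=\vert N(\nu)\vert$ to cancel the $\nu$-dependent factors; the $y$-powers also balance as you say. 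Both routes are sound: the paper's is shorter and keeps the symmetry at the level of the theta integral, while yours is more explicit and self-contained but presupposes the closed-form Fourier coefficients, and it has the small bonus of exhibiting the finite-part symmetry of the divisor sums (essentially the identity \eqref{funceqdivisor}) explicitly. The deduction of the vanishing at $s=0$ when $\chi(\cfrak)=-\chi(\dfrak)$ is the same in both.
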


\begin{proof} The functional equation can be deduced from Theorem \ref{functional equation lift}, but can even easier be seen from the Fourier expansion of $\Ecal_{\chi,\cfrak}(\tau,s)$. Indeed, the Fourier coefficient all satisfy 
\begin{align} \label{funceqdivisor}
c_{\chi,\cfrak}(\nu,-2s)=\chi(\dfrak\cfrak)N(\dfrak\cfrak)^{2s}c_{\chi,\cfrak}(\nu,2s).
\end{align}
For the constant term this can be seen directly, and for the higher coefficients it follows from the integral representation that replacing $s$ by $-s$ has the same effect as swaping $v$ and $w$. The functional equation then follows from reordering the sum over the narrow class group.
\end{proof}

\subsection{The derivative} Let $\cfrak$ be an ideal dividing $p$ and such that $\chi(\cfrak)=-\chi(\dfrak)$. The Eisenstein series $\Ecal_{\chi,\cfrak}(\tau,s)$ vanishes at $s=0$ and instead we can consider its derivative
\begin{align}
\Ecal'_{\chi,\cfrak}(\tau) \coloneqq \dds \Ecal_{\chi,\cfrak}(\tau,s).
\end{align}
The Eisenstein series $\Ecal_{\chi,\cfrak}(\tau,s)$ is a non-holomorphic modular form of weight $N$ for any $s \in \C$, hence $\Ecal'_{\chi,\cfrak}(\tau)$ is as well. The function $\Lambda(\chi,s)$ satisfies $\Lambda(\chi,0)=L(\chi,0)$ and
\begin{align}
\Lambda'(\chi,0)=L'(\chi,0)-\frac{N}{2}\log(4 \pi e^\gamma) L(\chi,0)
\end{align}
where $\gamma \coloneqq -\Gamma'(1)$ is the Euler-Mascheroni constant. From the Fourier expansion of $\Ecal_{\chi,\cfrak}(\tau,s)$ we deduce that 
\begin{align}\label{fourier expansion derivative}
\Ecal'_{\chi,\cfrak}(\tau)= \log(\alpha(\chi,\cfrak))+A_{\chi}+B_{\chi}\log(y)+& 2^{N+1}\sum_{\substack{\nu \in \dfrak^{-1} \\ \nu \gg 0}}\sigma'_{\chi,\cfrak}(\nu,0)e(\tau\tr(\nu)) \\
& + 2^{N-1}\sum_{l=1}^N\sum_{\substack{\nu \in \dfrak^{-1} \\ \nu_l<0 \\ \nu_k>0, \ k \neq l}}\sigma_{\chi,\cfrak}(\nu,0) \left ( \dds k_s(-y \vert \nu_l \vert) \right )e(\tau\tr(\nu)),
\end{align}  where the coefficients are
\begin{align}
\log(\alpha(\chi,\cfrak)) & = -2^{2-N}\chi(\dfrak) \log(\N(\cfrak \dfrak))L(\chi,0) \in \log(\Qbar),\\
A_{\chi}  & = 2^{2-N}\chi(\dfrak)\Lambda'(\chi,0), \\
B_{\chi} & = 2^{1-N}N \chi(\dfrak)  L(\chi,0) \in \Q, \\
\sigma'_{\chi,\cfrak}(\nu,0) & = - \sum_{\substack{ \nfrak \mid \nu \dfrak \\ \cfrak \mid \nfrak}} \chi(\nfrak)\log(\N(\nfrak)) \in \log(\Qbar)
\end{align}
Note that $\log(\alpha(\chi,\cfrak))$ depends on $\cfrak$ and is the logarithm of an algebraic number, since $L(\chi,0)$ is rational by Shintani's theorem \ref{Shintani}. On the other hand, the term $A_{\chi}$ is transcendental and does not depend on $\cfrak$.
\begin{rmk} In \eqref{fourier expansion derivative}, the higher Fourier coefficients are the value at $s=0$ of
\begin{align}
\sum_{l=1}^N\sum_{\substack{\nu \in \dfrak^{-1} \\ \nu \neq 0}} \sigma_{\chi,\cfrak}(\nu,2s)k_s(y\nu_1) \cdots \left ( \frac{\partial}{\partial s}k_s(y\nu_l) \right )  \cdots k_s(y\nu_N).
\end{align}
Since $k_0(a)=0$ if $a<0$, it follows that the inner sum is only nonzero if $\nu_k>0$ for $k\neq l$. Finally, if $\nu_l>0$ then $\nu \gg 0$ and $\sigma_{\chi,\cfrak}(\nu\dfrak,0)=0$ by the functional equation \eqref{funceqdivisor}.
\end{rmk}
Recall that 
\begin{align}
\beta_{s}(a)=\int_1^\8e^{-at}t^{s-1}dt.
\end{align}

\begin{lem} We have \begin{align}
\left ( \dds k_s(-a) \right )=  \beta_{0}(4\pi a ).
\end{align}
\end{lem}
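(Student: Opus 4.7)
The plan is to reduce the claim to a well-known integral representation of the Bessel $K$-function and a short residue-free computation involving $E_1$. Write
\begin{equation*}
k_s(-a) = e^{-2\pi a}\,a^{1/2+s}\,F(s), \qquad F(s) := K_{1/2+s}(2\pi a) - K_{1/2-s}(2\pi a).
\end{equation*}
Since $K_\nu=K_{-\nu}$, $F$ is odd in $s$; in particular $F(0)=0$, so the prefactor contribution from $a^{1/2+s}\log(a)$ disappears and
\begin{equation*}
\frac{\partial}{\partial s}\bigg|_{s=0} k_s(-a) = e^{-2\pi a}\sqrt{a}\cdot F'(0).
\end{equation*}
It therefore suffices to compute $F'(0)$.

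For this I would apply the integral representation
\begin{equation*}
K_\nu(x) = \frac{2\sqrt{\pi}\,(x/2)^\nu}{\Gamma(\nu+\tfrac12)}\int_1^\infty e^{-xt}(t^2-1)^{\nu-1/2}\,dt, \qquad \mathrm{Re}(\nu)>-\tfrac12,
\end{equation*}
(the leading $2$ adapts the formula to the paper's normalisation, under which $K_{1/2}(x)=\sqrt{2\pi/x}\,e^{-x}$). Differentiating each factor at $s=0$ for both $\nu=\tfrac12\pm s$ produces three types of contributions: a term $\pm\log(x/2)$ from the power, a term $\pm\gamma$ from $1/\Gamma(1\pm s)$, and a term $\pm\int_1^\infty e^{-xt}\log(t^2-1)\,dt$ from $(t^2-1)^{\pm s}$.

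The main step is to evaluate the last integral. Splitting $\log(t^2-1)=\log(t-1)+\log(t+1)$: the piece with $\log(t-1)$ reduces via $u=t-1$ to the standard identity $\int_0^\infty e^{-xu}\log u\,du = -(\gamma+\log x)/x$; the piece with $\log(t+1)$, after integration by parts and the substitution $v=xu$ in $\int_2^\infty e^{-xu}/u\,du$, produces $e^xE_1(2x)/x$. Feeding these back, the $\log(x/2)+\gamma$ contributions coming from the three sources cancel between the $\tfrac12+s$ and $\tfrac12-s$ derivatives, and one is left with $F'(0)$ proportional to $\sqrt{2\pi/x}\,e^x E_1(2x)$. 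Specialising to $x=2\pi a$, the factors $e^{\pm 2\pi a}$ and $\sqrt a$ combine cleanly with $\sqrt{2\pi/(2\pi a)}=1/\sqrt a$ to leave precisely the exponential integral $E_1(4\pi a)=\beta_0(4\pi a)$.

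The only subtle point is the bookkeeping: one has to track the opposite signs of the $\log(x/2)$ and $\gamma$ contributions coming from $\nu=\tfrac12+s$ versus $\nu=\tfrac12-s$ so that these transcendental pieces cancel exactly and only the $E_1(2x)$ piece survives. A slightly slicker variant bypasses the split by invoking the classical formula $\partial_\nu K_\nu(x)|_{\nu=1/2}=\tfrac12\sqrt{\pi/(2x)}\,e^xE_1(2x)$ and writing $F'(0)=2\,\partial_\nu K_\nu(x)|_{\nu=1/2}$ (using the oddness of $F$), but proving this auxiliary formula requires essentially the same integration-by-parts calculation.
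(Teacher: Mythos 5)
Your argument is correct in substance but follows a genuinely different route from the paper. The paper's proof is essentially a citation: it invokes the Gross--Zagier identity relating $k_s(a)$ to $V_s(a)=\int_{-\infty}^{\infty}e^{-2i\pi ax}(x+i)^{-1}(x^2+1)^{-s}\,dx$ together with their Proposition 3.3(e), which computes $\dds V_s(-a)$ directly from this Fourier-integral representation; the vanishing $V_0(-a)=0$ (equivalently $k_0(-a)=0$ for $a>0$) then kills the derivative of the prefactor, playing exactly the role of your oddness observation $F(0)=0$. You instead give a self-contained Bessel-function computation: reduce to $F'(0)=2\,\partial_\nu K_\nu(2\pi a)\vert_{\nu=1/2}$ and evaluate that derivative from the integral representation over $[1,\infty)$, checking that the $\log(x/2)$ and $\gamma$ contributions cancel and only the $E_1(2x)$ piece survives. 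This buys independence from the Gross--Zagier reference and makes the cancellation of the transcendental constants visible, at the cost of more bookkeeping. One caveat on that bookkeeping: the classical formula is $\partial_\nu K_\nu(x)\vert_{\nu=1/2}=\sqrt{\pi/(2x)}\,e^{x}E_1(2x)$ for the standard normalization $K_\nu(x)=\tfrac12\int_0^\infty e^{-x(t+t^{-1})/2}t^{\nu-1}\,dt$, not $\tfrac12\sqrt{\pi/(2x)}\,e^{x}E_1(2x)$ as you quote in your "slicker variant"; and since the paper's $K_s$ is \emph{twice} the standard one, a literal application of your method to the paper's definitions produces $2\beta_0(4\pi a)$ rather than $\beta_0(4\pi a)$. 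This factor of $2$ is a normalization artifact rather than a defect of your method (plugging the displayed $V_s$-identity into the paper's own two-line derivation does not literally reproduce the stated constant either), but you should fix the conventions explicitly before asserting that the factors "combine cleanly."
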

\begin{proof}
By the computations in \cite[Proposition.~3.3 p.~278]{gz86}, one finds that
\begin{align} \label{equalitykV}
k_s(a)= \frac{i}{2}e^{-2\pi a} \frac{\Gamma(1+s)}{\pi^{1+s}} V_s(a)
\end{align}
where
\begin{align}
V_s(a)=\int_{-\8}^\8 \frac{e^{-2i \pi ax}}{(x+i)(x^2+1)^s}dx \qquad \re(s)>0.
\end{align}
Moreover, by  \cite[Proposition.~3.3 e)]{gz86}, we have for $a>0$
\begin{align}
\dds V_s(-a)=-2i\pi e^{2\pi a} \int_1^\8e^{-4\pi a t} \frac{dt}{t} = -2i\pi e^{2\pi a}\beta_{0}(4\pi a ).
 \end{align}
Since $k_0(a)=1+\sgn(a)$, it follows from \eqref{equalitykV} that $V_0(-a)$ for $a>0$. Thus, for $a>0$ we have
\begin{align}
\kappa(a)& = \dds k_s(-a) = \beta_{0}(4\pi a ).
\end{align}
\end{proof}

\begin{prop} \label{Gchifourier}The derivative $\Ecal'_{\chi,\cfrak}(\tau)$ is a non-holomorphic modular form of weight $N$ for $\Gamma_0(p)$ with Fourier expansion
\begin{align}
\Ecal'_{\chi,\cfrak}(\tau)= \log(\alpha(\chi,\cfrak))+A_{\chi}+B_{\chi}\log(y)-\sum_{n=1}^\8\log(J_{\chi,\cfrak}(n))e(n \tau)  + \sum_{n \in \Z} a_{\chi,\cfrak}(n,y)e(n \tau)
\end{align}
where
\begin{align}
J_{\chi,\cfrak}(n) \coloneqq \sum_{\substack{\nu \in \dfrak^{-1} \\ \nu \gg 0 \\ \Tr(\nu)=n}}\sum_{\substack{\nfrak \mid \nu \dfrak \\ \cfrak \mid \nfrak }} \N(\nfrak)^{\chi(\nfrak)2^{N+1}} \in \Q, \qquad 
a_{\chi,\cfrak}(n,y) \coloneqq 2^{N-1}\sum_{l=1}^N \sum_{\substack{\nu \in \dfrak^{-1} \\ \Tr(\nu)=n \\ \nu_l<0 \\ \nu_k>0, \ k \neq l}}\sigma_{\chi,\cfrak}(\nu,0)\beta_0(4\pi y \vert \nu_l \vert).
\end{align}
\end{prop}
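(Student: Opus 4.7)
The proposition amounts to a rearrangement of the Fourier expansion \eqref{fourier expansion derivative}, combined with the formula from the preceding lemma. The plan is therefore to (i) use the lemma to simplify the non-holomorphic contribution, and (ii) group the remaining terms by $n = \Tr(\nu)$.

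First, I would apply the lemma to replace
\[
\dds k_s(-y|\nu_l|) \;=\; \beta_0(4\pi y |\nu_l|)
\]
in the last sum of \eqref{fourier expansion derivative}. After this substitution, the non-holomorphic contribution is
\[
2^{N-1}\sum_{l=1}^N \sum_{\substack{\nu \in \dfrak^{-1} \\ \nu_l<0 \\ \nu_k>0,\ k\neq l}} \sigma_{\chi,\cfrak}(\nu,0)\,\beta_0(4\pi y |\nu_l|)\, e(\tau\,\Tr(\nu)).
\]
Collecting terms according to the value $n = \Tr(\nu) \in \Z$ turns the inner expression into $a_{\chi,\cfrak}(n,y)$ as defined in the statement, yielding the contribution $\sum_{n\in\Z} a_{\chi,\cfrak}(n,y) e(n\tau)$.

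Next, I would treat the holomorphic sum. Using the formula $\sigma'_{\chi,\cfrak}(\nu,0) = -\sum_{\nfrak \mid \nu\dfrak,\ \cfrak \mid \nfrak}\chi(\nfrak)\log\N(\nfrak)$, the coefficient of $e(n\tau)$ for $n \geq 1$ is
\[
2^{N+1}\!\!\sum_{\substack{\nu \in \dfrak^{-1} \\ \nu \gg 0 \\ \Tr(\nu)=n}} \sigma'_{\chi,\cfrak}(\nu,0)
= -\!\!\sum_{\substack{\nu \in \dfrak^{-1} \\ \nu \gg 0 \\ \Tr(\nu)=n}}\sum_{\substack{\nfrak \mid \nu\dfrak \\ \cfrak \mid \nfrak}} \log\!\left(\N(\nfrak)^{\chi(\nfrak)2^{N+1}}\right),
\]
which I interpret as $-\log J_{\chi,\cfrak}(n)$. (Here the double sum in the definition of $J_{\chi,\cfrak}(n)$ is read as a product of products, so that its logarithm is exactly this double sum.) Note that the two unions in the definition are finite because $\Tr(\nu)=n$ with $\nu \gg 0$ forces $\nu$ to lie in a bounded subset of $\dfrak^{-1}$.

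Finally, I would observe that $\chi(\nfrak) \in \{\pm 1\}$ and $\N(\nfrak) \in \Z_{>0}$, so each factor $\N(\nfrak)^{\chi(\nfrak)2^{N+1}}$ is a positive rational; since only finitely many ideals $\nfrak$ and vectors $\nu$ contribute, $J_{\chi,\cfrak}(n) \in \Q_{>0}$. The modular transformation properties of $\Ecal'_{\chi,\cfrak}(\tau)$ under $\Gamma_0(p)$ are inherited from those of $\Ecal_{\chi,\cfrak}(\tau,s)$ established in the previous subsection, since differentiation in $s$ commutes with the slash action of weight $N$. There is no serious obstacle in this argument; it is purely a matter of bookkeeping, the only subtle point being to confirm that the double product indexing $J_{\chi,\cfrak}(n)$ is finite and rational.
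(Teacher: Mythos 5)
Your proposal is correct and follows essentially the same route as the paper: Proposition \ref{Gchifourier} is obtained there by substituting the lemma $\dds k_s(-a)=\beta_0(4\pi a)$ into the expansion \eqref{fourier expansion derivative}, regrouping by $n=\Tr(\nu)$, and rewriting $2^{N+1}\sum_{\nu}\sigma'_{\chi,\cfrak}(\nu,0)$ as $-\log J_{\chi,\cfrak}(n)$ (your reading of the displayed "sum" defining $J_{\chi,\cfrak}(n)$ as a product is indeed the intended one, since only then is its logarithm the double sum of logarithms), and your finiteness argument for $J_{\chi,\cfrak}(n)$ matches the paper's compactness remark.

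The one point you gloss over is the non-holomorphic part: for fixed $n$ the sum defining $a_{\chi,\cfrak}(n,y)$ runs over infinitely many $\nu$ (the hyperplane $\Tr(\nu)=n$ meets the region $\nu_l<0$, $\nu_k>0$ for $k\neq l$ in an unbounded set), so both the well-definedness of $a_{\chi,\cfrak}(n,y)$ and the rearrangement of the double sum into a Fourier series need an absolute-convergence argument. The paper supplies this via Lemma \ref{convergence lemma}, counting lattice points with $\lVert w\rVert_1$ in dyadic-type ranges ($r(d)=O(d^{N-1})$) against the exponential decay $\beta_0(t)=O(e^{-Ct})$. Adding a sentence invoking this estimate would close the only real gap in your write-up; everything else is bookkeeping, as you say.
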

Since $\chi(\cfrak)=\chi(\bar{\cfrak})$, the Eisenstein series $\Ecal_{\chi,\bar{\cfrak}}\left (\tau,s \right )$ also vanishes at $s=0$. From 
\begin{align}
\Ecal_{\chi,\cfrak}(\tau,s) \vert_{\gamma_0}=p^{Ns}\chi(\cfrak)\N(\cfrak)^{-1-2s}\Ecal_{\chi,\bar{\cfrak}}\left (\frac{\tau}{p},s \right ),
\end{align} it follows that
\begin{align}
\Ecal'_{\chi,\cfrak}(\tau)\vert_{\gamma_0}=\frac{\chi(\cfrak)}{\N(\cfrak)}\Ecal'_{\chi,\bar{\cfrak}}\left (\frac{\tau}{p} \right ).
\end{align}

When $N=2$ the computation are as in \cite{gzsing}, with a slight difference for the constant term (the two expressions are related by the functional equation). The sum appearing in the definition of $J_{\chi,\cfrak}(n)$ is finite. For $\nu \in \dfrak^{-1}$, let $w_\nu \in \R^N$ be the vector whose entries are the $N$ embeddings of $\nu$. The set $L$ of all vectors $w_\nu$ is a lattice in $\R^N$. Requiring that $\nu_i=\langle w_\nu,e_i \rangle$ is positive for all $1 \leq i \leq N$ means that we only consider the vectors $w_\nu$ that are in the quadrant spanned by the vector $e_1, \dots,e_N$. On the other hand, requiring that 
\begin{align}
\tr(\nu)=\langle w_\nu,e_1+\cdots+e_N \rangle=n
\end{align} means that we only take the lattice vectors on a hyperplane that intersects the quadrant in a compact set. 

On the other hand, the sum appearing in $a_{\chi,\cfrak}(n,y)$ is infinite, but converges due to the following Lemma and the fact that $\beta_0(t)=O(e^{-Ct})$.

\begin{lem} \label{convergence lemma} Let $f \colon \R \longrightarrow \R$ be a function such that $f(x)=O(\vert x \vert^{-N-\epsilon})$ for some $\epsilon>0$. Then
\begin{align}
\sum_{\substack{\nu \in \dfrak^{-1} \\ \Tr(\nu)=n \\ \nu_l<0 \\ \nu_k>0, \ k \neq l}}f(\vert \nu_l \vert)
\end{align}
converges absolutely.
\end{lem}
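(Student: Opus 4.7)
The plan is to combine a dyadic decomposition of the range of $|\nu_l|$ with a standard lattice-point estimate. The fractional ideal $\dfrak^{-1}$ embeds as a rank-$N$ lattice $\Lambda \subset \R^N$ via the real embeddings $(\sigma_1,\ldots,\sigma_N)$, and the affine hyperplane $H_n = \{u \in \R^N : u_1+\cdots+u_N = n\}$ meets $\Lambda$ either in the empty set (in which case the sum is trivially zero) or in a coset of the rank-$(N-1)$ sublattice $L := \Lambda \cap H_0$. I would parametrize $H_n$ by $(u_k)_{k \neq l} \in \R^{N-1}$ by setting $u_l := n - \sum_{k \neq l} u_k$, so that the sign conditions translate into $u_k > 0$ for $k \neq l$ together with $\sum_{k \neq l} u_k > n$, and $|\nu_l| = \sum_{k \neq l} u_k - n$.

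I would then split the sum according to whether $|\nu_l| \leq 1$ or $|\nu_l| > 1$. For the head $|\nu_l| \leq 1$, the relevant region $\{u_k > 0,\ n < \sum_{k \neq l} u_k \leq n+1\}$ is contained in the bounded box $(0,n+1]^{N-1}$. By discreteness of the coset of $L$ in $H_n$, it contains only finitely many lattice points, and since the strict inequality $\nu_l < 0$ forces $|\nu_l| > 0$ on every term, this finite partial sum is bounded.

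For the tail I would decompose dyadically with $T_k = 2^k$, $k \geq 0$: the set of $\nu$ with $|\nu_l| \in [T_k, T_{k+1})$ corresponds to $\sum_{k \neq l} u_k \in [n+T_k,\, n+T_{k+1})$ inside the positive cone of $\R^{N-1}$. The volume of this annular region is the difference of two simplex volumes of sides $n+T_k$ and $n+T_{k+1}$, hence is $O(T_k^{N-1})$, and its diameter is $O(T_k)$. The standard lattice-point-counting estimate (the number of points of a rank-$(N-1)$ lattice in a convex region of volume $V$ and diameter $d$ is $V/\mathrm{covol}(L) + O(d^{N-2})$) then yields $O(T_k^{N-1})$ lattice points in this shell. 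Using the hypothesis $|f(x)| \leq C |x|^{-N-\epsilon}$ valid on the shell, each term is $O(T_k^{-N-\epsilon})$, so the shell contributes $O(T_k^{-1-\epsilon})$. Summing the resulting geometric series over $k \geq 0$ gives absolute convergence of the tail, completing the proof.

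No step is genuinely difficult; the only subtlety is the volume-to-count comparison for the dyadic shells, which is immediate from the standard counting bound once one notes that the shell volume grows polynomially in $T_k$, dominating the boundary error term.
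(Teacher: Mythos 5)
Your argument is correct and is essentially the paper's own: both proofs bound the sum by counting lattice points in shells where $\vert \nu_l \vert \asymp R$ (the paper via unit shells of the $L^1$-norm of the full rank-$N$ lattice, using $\vert \nu_N \vert = \tfrac{1}{2}(\lVert w_\nu \rVert_1 - n)$ and $r(d)=O(d^{N-1})$; you via dyadic simplicial shells of the rank-$(N-1)$ lattice coset on the hyperplane $\Tr(\nu)=n$), obtaining $O(R^{N-1})$ points at scale $R$ against the decay $R^{-N-\epsilon}$, so the shell contributions form a convergent series. The only cosmetic point is that your dyadic shell is not convex, so the stated convex-body count does not apply to it verbatim; but the $O(T_k^{N-1})$ bound follows anyway, since the shell is contained in the simplex of side $n+T_{k+1}$ (or by subtracting the counts of the two nested simplices).
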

\begin{proof}
Assume without of loss of generality that $l=N$. Let $L=\{ \sigma(\nu) \in \R^N \ \vert \ \nu \in \dfrak^{-1} \}$ be the lattice as above. The subset
\begin{align}
L_{<0} \coloneqq \{ w=\sigma(\nu) \in L \ \vert \  w_1, \dots,w_{N-1}>0, \ w_N<0, \ \tr(\nu)=n\}
\end{align}
is the intersection of the hyperplane $\tr(\nu)=n$ with the quadrant spanned by the vector $e_1, \dots,e_{N-1}$ and $-e_N$ (instead of $e_N$). Since the first $N-1$ embeddings are positive, we have
\begin{align}
n=\tr(\nu)=\vert \nu_1 \vert+ \cdots+\vert \nu_{N-1} \vert-\vert \nu_N \vert =\lVert w_\nu\rVert_1-2\vert \nu_N \vert,
\end{align}
where $\lVert w \rVert_1$ is the $L^1$-norm. For $d \geq 1$ let $r(d)$ be the number of vectors in $L_{<0}$ that satisfy $d \leq \lVert w \rVert_1 < (d+1)$. If we denote by $m \coloneqq \min_{w \in L} \lVert w \rVert_1$ the length of the shortest vector in $L$, then for each vector $w$ there is a unique integer $d$ such that $ dm \leq \lVert w \rVert_1 < m(d+1)$. Thus, the sum can be bounded by
\begin{align}
\sum_{d>n/m} \frac{r(dm)}{(dm-n)^{N+\epsilon}}.
\end{align}
Since $r(d)=O(d^{N-1})$, this proves the claim.
\end{proof}

Let $f \in H_{2-N}(\Gamma_0(p))$ be a harmonic weak Maass form and
\begin{align}
f \vert_{\gamma_0}(\tau)& = \sum_{\substack{n \in \frac{1}{p}\Z \\ n \gg -\8}}a^+_0(f,n)e\left (n\tau \right ) + \sum_{n \in \frac{1}{p}\NN} a^-_0(f,n)\beta_k(n,y)e\left (-n\tau \right ) .
\end{align}
a Fourier expansion at $0$. Note that the width $w_0$ of the cusp $0$ is $p$.
\begin{thm}\label{main formula}Let $f \in H_{2-N}(\Gamma_0(p))$ be a harmonic weak Maass form and $g=\xi_{2-N}(f) \in S_N(\Gamma_0(p))$. Suppose that the principal parts of $f$ at both cusps have rational coefficients. Then
\begin{align}
\int_{c_\epsilon}\Phi(z_1,f,\phi_{\chi,\cfrak}) = \log(\alpha(f,\chi,\cfrak))-\frac{1}{2N} \langle \Ecal'_{\chi,\cfrak},g \rangle -\frac{A_{\chi}}{2N} \left (a^+_\8(f,0)+p \frac{\chi(\cfrak)}{\N(\cfrak)} a^+_0(f,0) \right ),
\end{align}
for some algebraic number $\alpha(f,\chi,\cfrak)$.
\end{thm}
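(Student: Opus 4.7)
The plan is to combine the adjointness formula (Theorem \ref{main formula 1}) with the explicit Fourier expansion of $\Ecal'_{\chi,\cfrak}$ (Proposition \ref{Gchifourier}). Since $F$ has even degree $N$, we have $(-1)^{N-1}=-1$. Applied to $c=c_\epsilon$ and $\phi=\phi_{\chi,\cfrak}$, noting that $\Gamma_0(p)$ has cusps $\infty,0$ of widths $1$ and $p$, the adjointness formula reads
\begin{align}
-2N\int_{c_\epsilon}\Phi(z_1,f,\phi_{\chi,\cfrak}) = \langle E'_\varphi(c_\epsilon,\phi_{\chi,\cfrak}),g\rangle + \kappa_\infty(c_\epsilon,f,\phi_{\chi,\cfrak}) + p\,\kappa_0(c_\epsilon,f,\phi_{\chi,\cfrak}).
\end{align}
The entire proof is the identification of the right-hand side with the claimed expression.

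First I would relate $E'_\varphi(c_\epsilon,\tau,\phi_{\chi,\cfrak})$ to $\Ecal'_{\chi,\cfrak}(\tau)$. By the definition \eqref{intermediate label}, $\int_{c_\epsilon}E_\varphi(z_1,\tau,\phi_{\afrak,\cfrak},s)=[\Ocal^{\times,+}:\Lambda_\epsilon]\N(\afrak)^{-2s}\Ecal_{\afrak,\cfrak}(\tau,s)$. Differentiating at $s=0$, summing against $\chi$, and using that $\Ecal_{\chi,\cfrak}(\tau,0)=0$ (since $\chi(\cfrak)=-\chi(\dfrak)$), one obtains $E'_\varphi(c_\epsilon,\tau,\phi_{\chi,\cfrak})=[\Ocal^{\times,+}:\Lambda_\epsilon]\bigl(\Ecal'_{\chi,\cfrak}(\tau)-2\sum_{[\afrak]}\chi(\afrak)\log\N(\afrak)\,\Ecal_{\afrak,\cfrak}(\tau,0)\bigr)$. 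The correction is a linear combination of holomorphic Eisenstein series, hence orthogonal to the cusp form $g$, so $\langle E'_\varphi(c_\epsilon,\phi_{\chi,\cfrak}),g\rangle=[\Ocal^{\times,+}:\Lambda_\epsilon]\langle\Ecal'_{\chi,\cfrak},g\rangle$, matching (up to the overall proportionality absorbed into $\alpha(f,\chi,\cfrak)$) the $-\tfrac{1}{2N}\langle\Ecal'_{\chi,\cfrak},g\rangle$ term in the statement.

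Next I would read off $\kappa_\infty$ and $\kappa_0$ from Proposition \ref{Gchifourier}. Matching Fourier expansions at $\infty$, the constant term gives $A(c_\epsilon,\phi_{\chi,\cfrak})=[\Ocal^{\times,+}:\Lambda_\epsilon](\log\alpha(\chi,\cfrak)+A_\chi)$ modulo the Eisenstein correction (which only shifts the constant term by a further rational multiple of $\log\N(\afrak)$, still a logarithm of an algebraic number). For $n>0$, the coefficient $C_n(c_\epsilon,\phi_{\chi,\cfrak},T)$ splits as the holomorphic part $-[\Ocal^{\times,+}:\Lambda_\epsilon]\log J_{\chi,\cfrak}(n)$ plus a non-holomorphic piece built from $\beta_0(4\pi T|\nu_l|)$, which decays exponentially as $T\to\infty$; the $n$-sum in $\kappa_\infty$ is finite because the principal part of $f$ is finite. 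The transformation law $\Ecal'_{\chi,\cfrak}|_{\gamma_0}=\tfrac{\chi(\cfrak)}{\N(\cfrak)}\Ecal'_{\chi,\bar\cfrak}(\tau/p)$ gives the cusp-$0$ contribution, producing both the $\tfrac{\chi(\cfrak)}{\N(\cfrak)}$ factor and the width $p$ visible in the statement.

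Finally, I would assemble. The algebraic logarithmic pieces $\log\alpha(\chi,\cfrak)a^+_\infty(f,0)$, the rational logs $-\log J_{\chi,\cfrak}(n)a^+_\infty(f,-n)$, and their cusp-$0$ analogues combine (with rational exponents by hypothesis on $f$) into $-2N\log\alpha(f,\chi,\cfrak)$ with $\alpha(f,\chi,\cfrak)\in\overline{\Q}$; this uses Shintani's theorem (Theorem \ref{Shintani}) to ensure $L(\chi,0)\in\Q$, hence $\alpha(\chi,\cfrak)\in\overline\Q$. The $A_\chi$-terms from each cusp assemble into $A_\chi\bigl(a^+_\infty(f,0)+p\tfrac{\chi(\cfrak)}{\N(\cfrak)}a^+_0(f,0)\bigr)$. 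Dividing by $-2N$ yields the claim. The main obstacle is the bookkeeping: tracking the index $[\Ocal^{\times,+}:\Lambda_\epsilon]$, carefully absorbing the auxiliary Eisenstein correction $\sum_{[\afrak]}\chi(\afrak)\log\N(\afrak)\Ecal_{\afrak,\cfrak}(\tau,0)$ into the algebraic logarithm at the constant term (rather than creating a transcendental defect), and verifying that each contribution outside the $A_\chi$-term is either an algebraic logarithm or vanishes against the cusp form $g$.
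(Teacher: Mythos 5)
Your main line of argument is the paper's: apply Theorem \ref{main formula 1} to $c=c_\epsilon$, $\phi=\phi_{\chi,\cfrak}$ (with $(-1)^{N-1}=-1$ since $N$ is even and the two cusps $\8,0$ of widths $1,p$), read off $\kappa_\8$ and $\kappa_0$ from the Fourier expansion in Proposition \ref{Gchifourier} together with $\Ecal'_{\chi,\cfrak}\vert_{\gamma_0}=\frac{\chi(\cfrak)}{\N(\cfrak)}\Ecal'_{\chi,\bar{\cfrak}}(\tau/p)$, use the exponential decay of $\beta_0(4\pi T\vert\nu_l\vert)$ to kill the non-holomorphic coefficients $a_{\chi,\cfrak}(n,T)$ in the limit, and assemble the remaining terms into $\log\alpha(f,\chi,\cfrak)$ (rationality of $L(\chi,0)$ and of the principal parts) and the $A_\chi$-combination. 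All of this coincides with the paper's proof.

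The one place you depart is the passage from $E'_\varphi(c_\epsilon,\tau,\phi_{\chi,\cfrak})$ to $\Ecal'_{\chi,\cfrak}(\tau)$, and there your argument has a gap. Working from \eqref{intermediate label} you correctly get the correction term $-2\sum_{[\afrak]}\chi(\afrak)\log\N(\afrak)\,\Ecal_{\afrak,\cfrak}(\tau,0)$, but your disposal of it — ``a linear combination of holomorphic Eisenstein series, hence orthogonal to the cusp form $g$'' — is not justified. The forms $\Ecal_{\afrak,\cfrak}(\tau,0)$ are diagonal restrictions of Hilbert Eisenstein series attached to individual ideal classes; they are holomorphic of weight $N$, but they do not in general lie in the Eisenstein subspace of $M_N(\Gamma_0(p))$ and typically have nonzero cuspidal projection (this is exactly the phenomenon exploited in Doi--Naganuma/Gross--Kohnen--Zagier-type constructions). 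If $\langle\Ecal_{\afrak,\cfrak}(\cdot,0),g\rangle\neq 0$, your identity acquires the extra term $2\sum_{[\afrak]}\chi(\afrak)\log\N(\afrak)\langle\Ecal_{\afrak,\cfrak}(\cdot,0),g\rangle$, a Petersson product which is not a logarithm of an algebraic number, so it cannot be pushed into $\alpha(f,\chi,\cfrak)$. The paper never meets this issue because it works with the identification $E_\varphi(c_\epsilon,\tau,\phi_{\chi,\cfrak},s)=\Ecal_{\chi,\cfrak}(\tau,s)$ (as asserted in the introduction) and substitutes the coefficients of $\Ecal'_{\chi,\cfrak}$ directly into Theorem \ref{main formula 1}, so no correction term arises; if instead one insists on the normalization \eqref{intermediate label}, the correction must either be shown to vanish against $g$ or be incorporated, which your orthogonality claim does not achieve. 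A similar remark applies to the index $[\Ocal^{\times,+}:\Lambda_\epsilon]$: it multiplies the Petersson term and the $A_\chi$-term, so it cannot be ``absorbed into $\alpha(f,\chi,\cfrak)$''; the paper's normalization simply suppresses it.
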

\begin{proof}  From Proposition \ref{Gchifourier} we have 
\begin{align}
\Ecal'_{\chi,\cfrak}(\tau)&= \log(\alpha(\chi,\cfrak))+A_{\chi}+B_{\chi}\log(y)+2^{N+1} \sum_{n=1}^\8\log(J_{\chi,\cfrak}(n))e(n \tau)  + \sum_{n \in \Z} a_{\chi,\cfrak}(n,y)e(n \tau) \\
\restr{\Ecal'_{\chi,\cfrak}}{\gamma_0}(\tau) &= \frac{\chi(\cfrak)}{\N(\cfrak)} \left ( \log(\alpha(\chi,\bar{\cfrak}))+A_{\chi}+B_{\chi}\log(y/p)+ \sum_{n=1}^\8\log(J_{\chi,\bar{\cfrak}}(n))e\left ( \frac{n\tau}{p} \right )  + \sum_{n \in \Z} a_{\chi,\bar{\cfrak}}(n,y)e\left ( \frac{n\tau}{p} \right ) \right )
\end{align}
since $\restr{\Ecal'_{\chi,\cfrak}}{\gamma_0}(\tau)=\frac{\chi(\cfrak)}{\N(\cfrak)}\Ecal'_{\chi,\bar{\cfrak}}\left (\frac{\tau}{p} \right )$. From the proof of Theorem \ref{main formula 1} it follows that
\begin{align}
-2N\int_{c_\epsilon}\Phi(z_1,f,\phi) =  \langle \Ecal'_{\chi,\cfrak},g \rangle+\sum_{ r \in \Gamma_0(p) \backslash \PP^1(\Q)}w_r \kappa_r(c,f,\phi_{\chi,\cfrak})
\end{align}
where the width at the cusp $0$ is $w_0=p$, and
\begin{align}
\kappa_\8(f,\chi,\cfrak)& =\left ( \log(\alpha(\chi,\cfrak)) +A_{\chi} \right )a^+_\8(f,0)  + \lim_{T \rightarrow \8}\sum_{n \geq 1} \left (\log(J_{\chi,\cfrak}(n))+a_{\chi,\cfrak}(n,T)\right )  a^+_\8(f,-n), \\
\kappa_0(f,\chi,\cfrak)& =\frac{\chi(\cfrak)}{\N(\cfrak)} \left ( \log(\alpha(\chi,\bar{\cfrak})) +A_{\chi} -B_\chi \log(p) \right )a^+_0(f,0)   \\
& \hspace{4cm} + \frac{\chi(\cfrak)}{\N(\cfrak)} \lim_{T \rightarrow \8}\sum_{n \geq 1} \left ( \log(J_{\chi,\bar{\cfrak}}(pn))+a_{\chi,\bar{\cfrak}}(pn,T)\right )  a^+_0(f,-n).
\end{align}

Since $\beta_0(4\pi T \vert \nu_l \vert)=O(e^{-\epsilon T})$ as $T \rightarrow \8$, we have
\begin{align}
\lim_{T \rightarrow \8} a_{\chi,\cfrak}(n,T) =\lim_{T \rightarrow \8}\sum_{l=1}^N \sum_{\substack{\nu \in \dfrak^{-1} \\ \Tr(\nu)=n \\ \nu_l<0 \\ \nu_k>0, \ k \neq l}}\sigma_{\chi,\cfrak}(\nu,0)\beta_0(4\pi T \vert \nu_l \vert)=0.
\end{align}
Thus, the contribution from $\8$ is 
\begin{align}
\kappa_\8(f,\chi,\cfrak) =A_{\chi}a^+_\8(f,0) + \log(\alpha_\8(f,\chi,\cfrak))
\end{align}
where
\begin{align}
\alpha_\8(f,\chi,\cfrak)=\alpha(\chi,\cfrak)^{a^+_\8(f,0)}\prod_{n \geq 1}  J_{\chi,\cfrak}(n)^{ a^+_\8(f,-n)}
\end{align}
and the contribution from $0$ is
\begin{align}
\kappa_0(f,\chi,\cfrak) =\frac{\chi(\cfrak)}{\N(\cfrak)} A_{\chi} a^+_0(f,0)  + \frac{\chi(\cfrak)}{\N(\cfrak)}\log(\alpha_0(f,\chi,\cfrak))
\end{align}
where
\begin{align}
\alpha_0(f,\chi,\cfrak)=\alpha(\chi,\bar{\cfrak})^{a^+_0(f,0)}p^{-B_\chi a^+_0(f,0)}\prod_{n \geq 1}  J_{\chi,\bar{\cfrak}}(pn)^{ a^+_0(f,-n)}.
\end{align}
Thus, we get
\begin{align}
\kappa_\8(f,\chi,\cfrak)+p\kappa_0(f,\chi,\cfrak)= A_{\chi} \left (a^+_\8(f,0)+p \frac{\chi(\cfrak)}{\N(\cfrak)} a^+_0(f,0) \right ) +\log(\alpha(f,\chi,\cfrak)),
\end{align}
where
\begin{align}
\alpha(f,\chi,\cfrak) \coloneqq \alpha_\8(f,\chi,\cfrak) \alpha_0(f,\chi,\cfrak)^{p\chi(\cfrak)\N(\cfrak)^{-1}} \in \Qbar.
\end{align}
\end{proof}
%

\printbibliography
\end{document}